\documentclass[12pt,reqno]{amsart}

\usepackage{amsmath,amssymb,ifthen}
\usepackage{fullpage}
\usepackage{graphicx,psfrag,subfigure}
\usepackage{color}
\usepackage{moreverb}
\usepackage{amsthm}
\usepackage{mathrsfs}
\usepackage{hhline}
\usepackage{bbm}
\usepackage[english]{babel}
\usepackage{tikz}
\usetikzlibrary{calc,math}
\usepackage{pgfplots}
\usepackage[utf8]{inputenc}
\usepackage[T1]{fontenc}
\usepackage{amsfonts}
\usepackage{enumerate}
\usepackage{hyperref}

\let\div\relax
\DeclareMathOperator{\div}{\mathrm{div}}
\DeclareMathOperator*{\argmin}{argmin}

\DeclareMathOperator{\diam}{diam}

\DeclareMathOperator\osc{osc}

\newcommand{\const}[1]{C_{\text{\rm#1}}}

\newcommand{\set}[2]{\big\{#1\,:\,#2\big\}}

\newcommand{\dual}[3][]{#1\langle#2\,,\,#3#1\rangle}

\newcommand{\norm}[3][]{#1\|#2#1\|_{#3}}

\newcommand\refine{{\tt refine}}

\newcommand\bs[1]{\boldsymbol{#1}}

\newcommand\A{\mathbb{A}}

\newcommand\N{\mathbb{N}}
\newcommand\R{\mathbb{R}}
\newcommand\T{\mathbb{T}}

\newcommand\MM{\mathcal M}

\newcommand\TT{\mathcal T}

\numberwithin{equation}{section}
\numberwithin{figure}{section}
\newtheorem{theorem}{Theorem}[section]

\newtheorem{lemma}[theorem]{Lemma}

\newtheorem{algorithm}[theorem]{Algorithm}

\newtheorem{remark}[theorem]{Remark}

\newcommand*\patchAmsMathEnvironmentForLineno[1]{%
  \expandafter\let\csname old#1\expandafter\endcsname\csname #1\endcsname
  \expandafter\let\csname oldend#1\expandafter\endcsname\csname end#1\endcsname
  \renewenvironment{#1}%
     {\linenomath\csname old#1\endcsname}%
     {\csname oldend#1\endcsname\endlinenomath}}%
\newcommand*\patchBothAmsMathEnvironmentsForLineno[1]{%
  \patchAmsMathEnvironmentForLineno{#1}%
  \patchAmsMathEnvironmentForLineno{#1*}}%
\AtBeginDocument{%
\patchBothAmsMathEnvironmentsForLineno{equation}%
\patchBothAmsMathEnvironmentsForLineno{align}%
\patchBothAmsMathEnvironmentsForLineno{flalign}%
\patchBothAmsMathEnvironmentsForLineno{alignat}%
\patchBothAmsMathEnvironmentsForLineno{gather}%
\patchBothAmsMathEnvironmentsForLineno{multline}%
}
\usepackage[mathlines]{lineno}

\newcommand\RT{\bm{\mathcal{R\hspace{-0.1em}T}}} 
\newcommand\oma{\omega_\ell(\bs{a})} 

\usepackage{cancel}
\usepackage[toc,page]{appendix}
\usepackage{bm}
\usepackage[normalem]{ulem}
\usepackage[textsize=tiny,color=blue!50!white]{todonotes}
\usepackage[usenames]{xcolor}

\newcommand{\jump}[1]{[\mkern-3mu[ #1 ]\mkern-3mu]}
\newcommand\VV{\mathcal{V}}

\title{On $\MakeLowercase{p}$-robust convergence and optimality of\\ adaptive FEM  driven by equilibrated-flux estimators}

\author{Th\'eophile Chaumont-Frelet}
\address{Inria Univ. Lille and Laboratoire Paul Painlev\'e, 59655 Villeneuve-d’Ascq, France}
\email{theophile.chaumont@inria.fr}%
\author{Zhaonan Dong}
\address{Inria Paris, 48 rue Barrault, 75647 Paris, France}
\email{zhaonan.dong@inria.fr}%
\author{Gregor Gantner}
\address{Institute for Numerical Simulation, University of Bonn, Friedrich-Hirzebruch-Allee~7, 53115 Bonn, Germany \& University of Twente, Department of Applied Mathematics, P.O. Box 217, 7500 AE Enschede, The Netherlands}
\email{gregor.gantner@utwente.nl}
\author{Martin Vohral\'ik}
\address{Inria Paris, 48 rue Barrault, 75647 Paris, France \& Charles University, Faculty of Mathematics and Physics, Sokolovská 83, 186 75, Prague, Czech Republic}
\email{martin.vohralik@inria.fr}%

\thanks{
Gregor Gantner acknowledges funding by the Deutsche Forschungsgemeinschaft (DFG, German Research Foundation) under Germany's Excellence Strategy -- EXC-2047/1 -- 390685813 and the individual research grant 545527047.
Martin Vohral\'ik was supported by the French Agence Nationale de la Recherche (ANR) under grant ANR-24-CE92-0065-01 (project RANPDEs).
}

\keywords{equilibrated-flux estimator, $p$-robustness, adaptive mesh refinement, error contraction, optimal convergence rate}
\subjclass[2010]{65N12, 65N30, 65N50}

\begin{document}

\begin{abstract}
Building on existing $hp$-adaptive algorithms driven by equilibrated-flux estimators from [ESAIM Math. Model. Numer. Anal. 57 (2023), 329--366] and the references therein, we propose a novel $h$-adaptive algorithm for a fixed polynomial degree $p$. We consider a conforming finite element discretization of the Poisson equation in two or three space dimensions.
Supposing piecewise polynomial right-hand side of degree $p-1$, we show that the algorithm yields error contraction at each step, with a contraction factor that is independent of $p$ provided that a certain {\sl a posteriori} verifiable criterion is satisfied.
We further show that this algorithm converges at optimal algebraic rate $s$ if the D\"orfler marking parameter is chosen below some specified $p$-independent upper threshold. The constants involved here are $p$-robust, although they may depend on the rate $s$.
The theoretical results are supported by numerical experiments, in which the {\sl a posteriori} criterion is always satisfied for one or a few local mesh refinement steps by newest-vertex bisection.
\end{abstract}

\date{\today}
\maketitle


\section{Introduction}

\subsection{Available results}

A posteriori error estimation in finite element methods (FEM) for linear elliptic partial differential equations (PDEs) along with (optimal) convergence of corresponding adaptive mesh-refinement algorithms are nowadays well understood; cf.~\cite{ao00,repin08,verfuerth13} and~\cite{cfpp14,bcnv24}.
Regarding (optimal) convergence proofs, a particular focus has been on residual estimators, e.g., \cite{doerfler96,mns00,Mor_Noch_Sieb_stars_03,bdd04,stevenson07,dk08,ckns08}, with only few exceptions, e.g., \cite{ks11,cn12,cfpp14,Ber_Bof_Prag_Syng_a_post_20}, which, however, typically rely crucially on the local equivalence of the considered estimators to residual ones.
Such a local equivalence is known for estimators based on local flux equilibration~\cite{Dest_Met_expl_err_CFE_99,bs08,bps09,ev15}; cf.~\cite{ks11,cn12,Ern_Voh_adpt_IN_13,Ber_Bof_Prag_Syng_a_post_20}.
These equilibrated-flux estimators provide a \emph{guaranteed} upper bound as well as a polynomial-degree-robust ($p$-robust) lower bound for the approximation error~\cite{ps47,bps09,ev15,ev20}.
This is \emph{not} the case for residual estimators, and the constants in both the error-contraction results~\cite{doerfler96,mns00,Mor_Noch_Sieb_stars_03,dk08,ckns08} and the optimal-convergence results~\cite{stevenson07,ckns08} depend on the employed polynomial degree $p$.
In particular, the upper threshold for the D\"orfler marking parameter that ensures optimal convergence depends, at least in theory, on $p$.
Indeed, this threshold hinges on discrete reliability in combination with either efficiency~\cite{stevenson07} or some stability property~\cite{dk08,ckns08} of the residual estimator, both of which depend on $p$; see also~\cite{dk21}.
So, when extending convergence results from residual estimators to equilibrated-flux estimators via local equivalence as done in~\cite{ks11,cn12,Ber_Bof_Prag_Syng_a_post_20}, the involved constants inevitably depend on the polynomial degree.

In contrast, $p$-robust error contraction is indeed documented in a broad series of careful numerical computations for an $hp$-adaptive method driven by an equilibrated-flux estimator~\cite{cnsv17b}, which together with a coarsening routine even yields instance optimality~\cite{cnsv17a} for $d\in\{1,2\}$.
Similarly, \cite{desv18,dev20,dv23} propose $hp$-adaptive methods that guarantee uniform $p$-robust error contraction as long as an {\sl a posteriori} verifiable criterion is satisfied.
More precisely, besides the local flux equilibration on vertex patch subdomains $\oma$, which give rise to vertex-based error indicators $\eta_\ell(\bs{a})$ for all mesh vertices $\bs{a}\in\VV_\ell$, local residual liftings $r_{\ell+1}^{\bs{a}} \in H_0^1(\oma)$ on all  marked vertex patches are computed.
One then defines
\begin{subequations}\begin{align}\label{eq:Clb_intro}
    \const{lb}(\bs{a}) & := \frac{\eta_\ell(\bs{a})}{\norm{\nabla r_{\ell+1}^{\bs{a}}}{\oma}} \quad \text{ for all marked vertices } \bs{a}\in\MM_\ell,\\
    \const{lb}(\ell) & :=\max_{\bs{a}\in\MM_\ell} \const{lb}(\bs{a}).
\end{align}\end{subequations}
For a fixed user-defined lower-bound parameter $\const{lb,max}>0$, the criterion then reads \begin{align}\label{eq:criterion}
	  \const{lb}(\ell) \le \const{lb,max}.
\end{align}
For the Poisson model problem with homogeneous Dirichlet boundary conditions, \cite{dv23} show the existence of a $p$-independent constant $\const{lb,opt}>0$ (see~\eqref{eq:Clb_opt})
such that sufficiently small data oscillations as well as sufficiently strong refinement of all marked vertex patch subdomains $\oma$ from the current conforming triangulation $\TT_\ell$ to the next one $\TT_{\ell+1}$ guarantee that $\const{lb}(\bs{a}) \le \const{lb,opt}$.
Hence, choosing $\const{lb,max} \ge \const{lb,opt}$, criterion~\eqref{eq:criterion} can indeed be satisfied.
However, $\const{lb,opt}$ is not computable, and even if it was, the required depth of refinement is unknown and might, at least in theory, not be uniformly bounded, which would set optimal convergence out of reach;
in practical numerical experiments of~\cite{dv23} and Section~\ref{sec:numerics} below, though, $\const{lb}(\ell)$ is observed to be always smaller than $1.6$ already for one or two newest-vertex bisections.
Moreover, \cite{dv23} additionally show, under an interior node property (see~\eqref{eq:interior_node}), that $\const{lb}(\ell) \le \const{lb,int}(p)$ for a $p$-dependent constant $\const{lb,int}(p)>0$.
In any case, there holds the error contraction
\begin{align}\label{eq:error_contraction}
	\norm{\nabla (u - u_{\ell+1})}{\Omega}
	\le \underbrace{\Big(1- \frac{\theta^2}{(d+1)^2 \const{lb}(\ell)^2}\Big)^{1/2}}_{=:q_{\rm ctr}} \norm{\nabla(u - u_\ell)}{\Omega},
\end{align}
for the Galerkin approximations $u_\ell$ and $u_{\ell+1}$ on the meshes $\TT_\ell$ and $\TT_{\ell+1}$ and the D\"orfler marking parameter $\theta$, where the contraction factor $0<q_{\rm ctr}\le1$ is computable \emph{without} knowing $u_{\ell+1}$.

\subsection{The present contribution}

In the present manuscript, we propose an $h$-adaptive version of \cite{desv18,dev20,dv23} that proceeds as follows: we choose a desired upper bound $\const{lb,max}>0$ (taken as $10$ in the numerical experiments in Section~\ref{sec:numerics} below) and a maximal number $\beta_{\rm max}\in\N$ of newest-vertex bisections which ensures the interior node property~\eqref{eq:interior_node} (taken as $3$ in the numerical experiments in Section~\ref{sec:numerics} below).
Then, we are sure to either satisfy criterion~\eqref{eq:criterion} after at most $\beta_{\rm max}$ newest-vertex bisections, or to satisfy $\const{lb}(\ell) \le \const{lb,int}(p)$ thanks to the interior node property. Then, as in \cite{desv18,dev20,dv23}, we obtain the error contraction~\eqref{eq:error_contraction} with computable contraction factor $q_{\rm ctr}$, where the algorithm theoretically guarantees that $\const{lb}(\ell) \le \max\{\const{lb,max},\const{lb,int}(p)\}$ uniformly in $\ell$.
Practical numerical experiments in Section~\ref{sec:numerics} always yield small $\const{lb}(\ell)< 1.6 \leq \const{lb,max} = 10$ after $\beta_{\rm max} = 3$ newest-vertex bisections, and thus, $p$-robust error contraction since condition~\eqref{eq:criterion} is satisfied. Moreover, the computable factor $q_{\rm ctr}$ is numerically close to the actual ratio $\norm{\nabla (u - u_{\ell+1})}{\Omega} / \norm{\nabla (u - u_\ell)}{\Omega}$.

Next, we prove that the equilibrated-flux estimator satisfies discrete reliability, where, using the recent $p$-robust projector from~\cite{vohralik26}, the corresponding constant does not depend on the polynomial degree $p$.
Together with $p$-robust efficiency of the estimator and the fact that the data oscillations can be bounded by the estimator (see~\eqref{eq:Cosc}), we conclude the crucial property of ``optimality of D\"orfler marking'' (see~\eqref{eq:optimality_doerfler}) for all D\"orfler parameters $\theta$ below some threshold $\tilde\theta_{\rm opt}$ that is again independent of $p$ if the right-hand side $f$ is a piecewise polynomial of degree $p-1$.
In combination with error contraction, we finally deduce that the proposed algorithm converges at any possible algebraic rate $s$ with respect to the degrees of freedom, i.e.,
\begin{align}\label{eq:opt_cvg}
	\sup_{\ell\in\N_0}\big[[p^{d}(\#\TT_\ell-\#\TT_0+1)]^{s}\norm{\nabla(u - u_\ell)}{\Omega}\big]
	\le \const{opt} \norm{u}{\A_s}
\end{align}
for some constant $\const{opt}>0$, whenever the D\"orfler marking parameter $\theta$ is below a slight, still $p$-robust, variation $\theta_{\rm opt}$ of $\tilde\theta_{\rm opt}$ (see~\eqref{eq:theta_opt2}).
Here, $\norm{u}{\A_s}<\infty$ means that a convergence rate $s$ is possible for optimally chosen meshes $\TT_{\ell,\rm opt}$, and the term $\norm{u}{\A_s}$ essentially characterizes the multiplicative constant $C$ in the estimate $\norm{\nabla (u - u_{\ell,\rm opt})}{\Omega} \le C [p^{d}(\#\TT_{\ell,\rm opt}-\#\TT_0+1)]^{-s}$ (see~\eqref{eq:approximation}).
Note that the term $p^{d}(\#\TT_\ell-\#\TT_0+1)$ is indeed equivalent to the number of degrees of freedom.
The constant $\const{opt}$ can be written in terms of other constants which are all independent of $p$, except for the error contraction factor $q_{\rm ctr}$ from~\eqref{eq:error_contraction}, which is only conditionally $p$-robust.
However, we stress that $\const{opt}$ depends on the rate $s$, which itself might depend on $p$, as in typical situations one indeed expects the best possible rate to be $s = p/d$; on uniformly refined meshes, this then in particular recovers the standard $hp$ approximation estimate $\norm{\nabla(u - u_\ell)}{\Omega} \leq \big(\frac{h}{p}\big)^p \const{opt} \norm{u}{\A_s}$.
Together with local equivalence of the equilibrated-flux estimator to the weighted residual estimator, which we recall in Appendix~\ref{sec:equivalence}, the developed arguments further allow to demonstrate that also the standard element-based algorithm steered by the equilibrated-flux estimator stated in Appendix~\ref{sec:algorithm_element} converges R-linearly at optimal rate (though with $p$-dependent constants) provided that the corresponding D\"orfler marking parameter $\theta$ is chosen below the $p$-robust threshold $\tilde\theta_{\rm opt}$.

\subsection{Setting}

On a polygonal or polyhedral Lipschitz domain $\Omega\subset \R^d$, $d\in\{2,3\}$, we consider for given $f\in L^2(\Omega)$ the Poisson model problem with homogeneous Dirichlet boundary conditions
\begin{align}\label{eq:strong}
\begin{split}
	-\Delta u &= f \quad\text{in }\Omega,
	\\
	u &= 0 \quad\text{on }\partial\Omega.
\end{split}
\end{align}
In weak form, this problem reads as follows:
Find $u\in H_0^1(\Omega)$ such that
\begin{align}\label{eq:weak}
	\dual{\nabla u}{\nabla v}_\Omega = \dual{f}{v}_\Omega \quad\text{for all }v\in H_0^1(\Omega).
\end{align}
Here, $\dual{\cdot}{\cdot}_\omega$ stands for the $L^2(\omega)$ or $[L^2(\omega)]^d$ scalar product on a (sub)domain $\omega\subseteq\Omega$. Let $\TT_\ell$ be a conforming triangulation of $\Omega$ into compact triangles or tetrahedra with corresponding vertex set $\VV_\ell$. Let $p\ge1$ be a fixed polynomial degree.
Moreover, let $u_\ell\in \mathbb{V}_\ell:=\mathcal{P}^p(\TT_\ell)\cap H_0^1(\Omega)$ be the Galerkin approximation of $u$ in the space of continuous piecewise polynomials of degree $p$ vanishing on $\partial\Omega$, which is characterized by
\begin{align}\label{eq:discrete}
	\dual{\nabla u_\ell}{\nabla v_\ell}_\Omega = \dual{f}{v_\ell}_\Omega \quad\text{for all }v_\ell\in \mathbb{V}_\ell.
\end{align}

\subsection{Outline}
Section~\ref{sec:estimator} recalls the local construction of an equilibrated flux along with the resulting {\sl a posteriori} error estimators, which are either element- or vertex-based.
We further recall and prove several properties of the estimators, namely reliability, efficiency, discrete efficiency, and discrete reliability.
These are employed in Section~\ref{sec:optimality} to prove error contraction as per~\eqref{eq:error_contraction} and optimal convergence as per~\eqref{eq:opt_cvg} of the vertex-based adaptive Algorithm~\ref{alg:afem}, which includes the verification of optimality of D\"orfler marking.
In Section~\ref{sec:numerics}, we present numerical examples to illustrate our theoretical results and empirically verify that $\const{lb}(\ell)$ is ``small'' (below $3$) in practice.
Finally, Appendix~\ref{sec:equivalence} provides the detailed arguments that the considered vertex-based estimator is locally equivalent to the usual element-based version,
and that both are indeed locally equivalent to the residual estimator.
From this, we deduce in Appendix~\ref{sec:algorithm_element} R-linear convergence at optimal rate (though with $p$-dependent constants) with $p$-robust threshold for the D\"orfler marking parameter of a standard element-based adaptive algorithm steered by the equilibrated-flux estimator.

\section{Equilibrated-flux estimator}\label{sec:estimator}

In this section, we first recall the local construction of an equilibrated flux along with the resulting {\sl a posteriori} error estimator following~\cite{Boss_hypercirc_a_post_98,Dest_Met_expl_err_CFE_99,bs08,bps09,Par_San_Die_flux_a_post_09,Cott_Diez_Huer_strict_lin_09,Ern_Voh_adpt_IN_13,ev15}.
We further recall and prove several properties of the estimator, namely its reliability, efficiency, discrete efficiency, and discrete reliability, which will be exploited in Section~\ref{sec:optimality} to prove contraction and optimal convergence of a vertex-based adaptive algorithm.

\subsection{Equilibrated flux}
For $\bs{a}\in\VV_\ell$, we define the corresponding patch $\TT_\ell(\bs{a}):=\set{T\in\TT_\ell}{\bs{a}\in T}$ of triangles or tetrahedra sharing the vertex $\bs{a}$ and $\oma$ as the associated open subdomain.
The corresponding hat function is $\psi_\ell^{\bs{a}} \in \mathcal{P}^1(\TT_\ell)\cap H^1(\Omega)$ taking value $1$ at the vertex $\bs{a}$ and $0$ at all other vertices $\VV_\ell\setminus\{\bs{a}\}$.
Later, we will also use the notation $\TT_\ell(T)$ for the set of all elements $T' \in \TT_\ell$ sharing a vertex with $T \in \TT_\ell$ and $\omega(\mathcal{S}_\ell)\subseteq\Omega$ for the open (sub)domain corresponding to $\mathcal{S}_\ell\subseteq\TT_\ell$.
In particular, $\omega_\ell(\bs{a})$ coincides with $\omega(\TT_\ell(\bs{a}))$.
With the exterior normal $\bs{n}_{\oma}$ of $\partial\oma$, we set
\begin{align*}
	\bs{H}_0(\div;\oma):=
	\begin{cases}
	\set{\bs{\tau}\in \bs{H}(\div;\oma)}{\bs{\tau}\cdot \bs{n}_{\oma}=0\text{ on }\partial\oma} \quad \text{ if } \bs{a} \in \VV_\ell \cap \Omega,
	\\
	\set{\bs{\tau}\in \bs{H}(\div;\oma)}{\bs{\tau}\cdot \bs{n}_{\oma}=0\text{ on }\partial\oma\cap (\psi^{\bs{a}}_\ell)^{-1}(\{0\})} \quad \text{ else}.
	\end{cases}
\end{align*}
Moreover, let $\Pi_\ell^{p}:L^2(\Omega)\to \mathcal{P}^p(\TT_\ell)$ denote the $L^2$-orthogonal projection onto the space of piecewise polynomials of degree $p$.
Let $\RT^p(T) := [\mathcal{P}^p(T)]^d + \mathcal{P}^p(T) \bs{x}$ be the space of vector-valued Raviart--Thomas polynomials of order $p$ on the mesh element $T\in\TT_\ell$. The space of piecewise Raviart--Thomas polynomials on a mesh $\mathcal{S}_\ell\subseteq\TT_\ell$ of a (sub)domain $\omega\subseteq\Omega$ is then $\RT^p(\mathcal{S}_\ell) := \set{  \bs{\tau}_\ell \in  [L^2(\omega)]^d }{\bs{\tau}_\ell|_T \in \RT^p(T)$ for all $T \in \mathcal{S}_\ell}$.
We define the equilibrated flux $\bs{\sigma}_\ell\in \RT^p(\TT_\ell)\cap \bs{H}(\div;\Omega)$ in the $\bs{H}(\div;\Omega)$-conforming subspace of $\RT^p(\TT_\ell)$ by
\begin{align}\label{eq:vertex_estimator}
	\bs{\sigma}_\ell:=\sum_{\bs{a}\in\VV_\ell} \bs{\sigma}_\ell^{\bs{a}}
	\quad \text{with}\quad
	\bs{\sigma}_\ell^{\bs{a}}:=\argmin_{\substack{\bs{\tau}_\ell\in \RT^p(\TT_\ell(\bs{a}))\cap \bs{H}_0(\div;\oma)\\ \nabla{\cdot} \bs{\tau}_\ell
		= \Pi_\ell^{p}(\psi_\ell^{\bs{a}} f)-\nabla\psi_\ell^{\bs{a}}\cdot \nabla u_\ell}} \norm{\psi_\ell^{\bs{a}}\nabla u_\ell + \bs{\tau}_\ell}{\oma},
\end{align}
where we tacitly use extension by zero outside of the patch subdomains $\oma$. Note that the partition of unity $\sum_{\bs{a}\in\VV_\ell} \psi_\ell^{\bs{a}} = 1$ immediately gives $\nabla{\cdot}\bs{\sigma}_\ell = \Pi_\ell^{p} f$.

\subsection{Reliability}\label{sec:reliability}
It is well-known that the equilibrated flux $\bs{\sigma}_\ell$ gives rise to a \emph{guaranteed} upper bound for the error.

\begin{theorem}[reliability with constant one~\cite{ps47}] It holds that
\begin{align}\label{eq:reliability_one}
	\norm{\nabla (u-u_\ell)}{\Omega}\le \Big(\sum_{T\in\TT_\ell} \underbrace{\big(\norm{\nabla u_\ell + \bs{\sigma}_\ell}{T} + \frac{h_T}{\pi}\norm{f-\nabla{\cdot}\bs{\sigma}_\ell}{T}\big)^2}_{=:\eta_\ell(T)^2}\Big)^{1/2},
\end{align}
where $h_T:=\diam(T)$.
\end{theorem}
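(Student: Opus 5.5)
The plan is the classical Prager--Synge argument: use the error itself as a test function and split the residual into a flux part and a divergence part, using that $\bs{\sigma}_\ell\in\bs{H}(\div;\Omega)$ and that $\nabla{\cdot}\bs{\sigma}_\ell=\Pi_\ell^p f$.

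Set $e:=u-u_\ell\in H_0^1(\Omega)$. First, by the weak formulation~\eqref{eq:weak},
\begin{align*}
\norm{\nabla e}{\Omega}^2=\dual{\nabla u-\nabla u_\ell}{\nabla e}_\Omega=\dual{f}{e}_\Omega-\dual{\nabla u_\ell}{\nabla e}_\Omega.
\end{align*}
Next, since $\bs{\sigma}_\ell\in\bs{H}(\div;\Omega)$ and $e$ vanishes on $\partial\Omega$, Green's formula gives $\dual{\bs{\sigma}_\ell}{\nabla e}_\Omega+\dual{\nabla{\cdot}\bs{\sigma}_\ell}{e}_\Omega=0$. Adding this identity yields
\begin{align*}
\norm{\nabla e}{\Omega}^2=\sum_{T\in\TT_\ell}\Big(\dual{f-\nabla{\cdot}\bs{\sigma}_\ell}{e}_T-\dual{\nabla u_\ell+\bs{\sigma}_\ell}{\nabla e}_T\Big).
\end{align*}
The second term on each element is bounded by Cauchy--Schwarz as $\norm{\nabla u_\ell+\bs{\sigma}_\ell}{T}\norm{\nabla e}{T}$.

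For the first term, the key observation is that $f-\nabla{\cdot}\bs{\sigma}_\ell=f-\Pi_\ell^pf$ is $L^2(T)$-orthogonal to constants on each $T$. We may therefore replace $e$ by $e-\bar e_T$, where $\bar e_T$ is the mean of $e$ on $T$. The Poincaré inequality on convex domains (Payne--Weinberger, with constant $h_T/\pi$) then gives
\begin{align*}
\dual{f-\nabla{\cdot}\bs{\sigma}_\ell}{e}_T\le\frac{h_T}{\pi}\norm{f-\nabla{\cdot}\bs{\sigma}_\ell}{T}\norm{\nabla e}{T}.
\end{align*}
This step is the only one that needs care: one must remember that simplices are convex, so the sharp Payne--Weinberger constant applies, and that the orthogonality to constants holds because $p\ge0$ in the projection $\Pi_\ell^p$.

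Combining the two elementwise bounds gives $\norm{\nabla e}{\Omega}^2\le\sum_T\eta_\ell(T)\norm{\nabla e}{T}$. A discrete Cauchy--Schwarz inequality over the elements bounds this by $\big(\sum_T\eta_\ell(T)^2\big)^{1/2}\norm{\nabla e}{\Omega}$. Dividing by $\norm{\nabla e}{\Omega}$ (the case $e=0$ being trivial) yields~\eqref{eq:reliability_one}.
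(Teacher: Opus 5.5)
Your proposal is correct and is essentially the paper's proof. Both use the weak form, Green's formula with $\nabla{\cdot}\bs{\sigma}_\ell=\Pi_\ell^{p}f$, an elementwise Poincar\'e bound with constant $h_T/\pi$, and Cauchy--Schwarz. The only cosmetic differences are that you test with the error $e=u-u_\ell$ instead of taking the supremum over $v\in H_0^1(\Omega)$, and you subtract the elementwise mean of $e$ rather than $\Pi_\ell^{p}e$; both give the same bound.
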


\begin{proof}
This is a simple consequence of the weak formulation~\eqref{eq:weak}, integration by parts, the fact that $\nabla{\cdot}\bs{\sigma}_\ell = \Pi_\ell^{p} f$, and the Poincar\'e inequality $\norm{v-\Pi_\ell^{p} v}{T} \leq \frac{h_T}{\pi} \norm{\nabla v}{T}$:
\begin{align*}
	\norm{\nabla (u-u_\ell)}{\Omega}
	&= \sup_{v\in H_0^1(\Omega)\setminus\{0\}} \frac{\dual{\nabla(u - u_\ell)}{\nabla v}_\Omega}{\norm{\nabla v}{\Omega}}
	\\
	&= \sup_{v\in H_0^1(\Omega)\setminus\{0\}} \frac{\dual{f - \nabla{\cdot}\bs{\sigma}_\ell}{v}_\Omega - \dual{\nabla u_\ell + \bs{\sigma}_\ell}{\nabla v}_\Omega}{\norm{\nabla v}{\Omega}}
	\\
	&\le \Big(\sum_{T\in\TT_\ell} \eta_\ell(T)^2\Big)^{1/2}.
\end{align*}
This concludes the proof.
\end{proof}

We abbreviate
\begin{align*}
	\eta_\ell(\mathcal{S}_\ell) := \Big(\sum_{T\in\mathcal{S}_\ell} \eta_\ell(T)^2\Big)^{1/2}
	\quad \text{for all }\mathcal{S}_\ell\subseteq\TT_\ell.
\end{align*}

For a triangle or tetrahedron $T\in\TT_\ell$, we denote by $\VV_T$ the corresponding set of vertices.
Using the element-wise estimates
\begin{align}\label{eq:triangle}
	\norm{\nabla u_\ell + \bs{\sigma}_\ell}{T} = \Big\|\sum_{\bs{a}\in\VV_T}(\psi_\ell^{\bs{a}}\nabla u_\ell + \bs{\sigma}_\ell^{\bs{a}})\Big\|_T
	\le \sum_{\bs{a}\in\VV_T} \norm{\psi_\ell^{\bs{a}}\nabla u_\ell + \bs{\sigma}_\ell^{\bs{a}}}{T}
\end{align}
as well as
\begin{align}\label{eq:oscillations}
	\norm{f - \nabla{\cdot}\bs{\sigma}_\ell}{T}
	&= \Big\|\sum_{\bs{a}\in\VV_T} (\psi_\ell^{\bs{a}} f - \nabla \psi_\ell^{\bs{a}}\cdot\nabla u_\ell - \nabla{\cdot}\bs{\sigma}_\ell^{\bs{a}})\Big\|_T
	\le \sum_{\bs{a}\in\VV_T} \norm{\psi_\ell^{\bs{a}} f - \Pi_\ell^{p}(\psi_\ell^{\bs{a}} f)}{T},
\end{align}
we derive the vertex-based reliability estimate
\begin{align}\label{eq:reliability}
	\norm{\nabla (u-u_\ell)}{\Omega}
	\le (d+1)^{1/2} \Big(\sum_{\bs{a}\in\VV_\ell}\underbrace{\sum_{T\in\TT_\ell(\bs{a})} \big(\norm{\psi_\ell^{\bs{a}}\nabla u_\ell + \bs{\sigma}_\ell^{\bs{a}}}{T}
		+ \frac{h_T}{\pi}\norm{\psi_\ell^{\bs{a}} f - \Pi_\ell^{p}(\psi_\ell^{\bs{a}} f)}{T} \big)^2}_{=:\eta_\ell(\bs{a})^2}\Big)^{1/2}.
\end{align}
Similarly as above, we abbreviate
\begin{align} \label{eq_eta_W_ell}
	\eta_\ell(\mathcal{W}_\ell) := \Big(\sum_{\bs{a}\in\mathcal{W}_\ell} \eta_\ell(\bs{a})^2\Big)^{1/2}
	\quad \text{for all }\mathcal{W}_\ell\subseteq\VV_\ell.
\end{align}

Finally, we introduce notation for the respective oscillations
\begin{subequations}\label{eq_osc}\begin{align}
	\osc_\ell(T) &:= \frac{h_T}{\pi} \norm{f - \Pi_\ell^{p} f}{T} \quad \text{for all }T\in\TT_\ell, \label{eq_osc_T}
	\\
	\osc_\ell(\bs{a}) &:= \Big(\sum_{T\in\TT_\ell(\bs{a})} \frac{h_T^{2}}{\pi^{2}}\norm{\psi_\ell^{\bs{a}} f - \Pi_\ell^{p}(\psi_\ell^{\bs{a}} f)}{T}^2\Big)^{1/2} \quad \text{for all }\bs{a}\in\VV_\ell \label{eq_osc_a}
\end{align}\end{subequations}
and, as above,
\begin{align*}
	\osc_\ell(\mathcal{S}_\ell) &:= \Big(\sum_{T\in\mathcal{S}_\ell} \osc_\ell(T)^2\Big)^{1/2}
	\quad \text{for all }\mathcal{S}_\ell\subseteq\TT_\ell,
	\\
	\osc_\ell(\mathcal{W}_\ell) &:= \Big(\sum_{\bs{a}\in\mathcal{W}_\ell} \osc_\ell(\bs{a})^2\Big)^{1/2}
	\quad \text{for all }\mathcal{W}_\ell\subseteq\VV_\ell.
\end{align*}

\begin{remark}[local equivalence]
While in the following, we recall and prove properties for either of the presented element-based or vertex-based equilibrated-flux estimator,
we mention that they are indeed locally equivalent, even independently of the polynomial degree $p$ in case of $f\in\mathcal{P}^{p-1}(\TT_\ell)$;
see~\eqref{eq:triangle}--\eqref{eq:oscillations} for one direction and Appendix~\ref{sec:equivalence_flux} for the other.
\end{remark}

\subsection{Efficiency}
In this section, we recall that the equilibrated flux $\bs{\sigma}_\ell$ provides a $p$-\emph{robust} (local) lower bound for the error.
Let
\begin{align}\label{eq:minimization}
	\bs{\sigma}^{\bs{a}}
	:=\argmin_{\substack{\bs{\tau}\in \bs{H}_0(\div;\oma)\\ \nabla{\cdot} \bs{\tau} = \Pi_\ell^{p}(\psi_\ell^{\bs{a}} f)-\nabla\psi_\ell^{\bs{a}}\cdot \nabla u_\ell}}
		\norm{\psi_\ell^{\bs{a}}\nabla u_\ell + \bs{\tau}}{\oma}
\end{align}
be the continuous counterpart of $\bs{\sigma}_\ell^{\bs{a}}$ from~\eqref{eq:vertex_estimator}. The crucial ingredient of the efficiency proof is the stability estimate from~\cite{bps09,ev20,cv26}
\begin{align}\label{eq:stability}
	\norm{\psi_\ell^{\bs{a}}\nabla u_\ell + \bs{\sigma}_\ell^{\bs{a}}}{\oma}
	\le \const{st} \norm{\psi_\ell^{\bs{a}}\nabla u_\ell + \bs{\sigma}^{\bs{a}}}{\oma}.
\end{align}
Let \begin{align}
H_*^1(\oma) :=
	\begin{cases}
	\set{v\in H^1(\oma)}{\dual{v}{1}_{{\oma}} = 0} &\text{ if } \bs{a}\in \VV_\ell\cap\Omega,
	\\
	\set{v\in H^1(\oma)}{v=0\text{ on }\partial\oma\setminus (\psi^{\bs{a}}_\ell)^{-1}(\{0\})} &\text{ else}.
	\end{cases}
\end{align}
Following~\cite{bps09,ev15,ev20}, we introduce the local residual lifting $\rho^{\bs{a}}\in H_*^1(\oma)$ defined via
\begin{align}\label{eq:res_lift_neumann}
	\dual{\nabla \rho^{\bs{a}}}{\nabla v}_{\oma}
	= \underbrace{\dual{\Pi_\ell^{p}(\psi_\ell^{\bs{a}} f) - \nabla \psi_\ell^{\bs{a}} \cdot \nabla u_\ell}{v}_{\oma} - \dual{\psi_\ell^{\bs{a}} \nabla u_\ell}{\nabla v}_{\oma}}%
		_{=\dual{\Pi_\ell^{p}(\psi_\ell^{\bs{a}} f)}{v}_{\oma} - \dual{\nabla u_\ell}{\nabla (\psi_\ell^{\bs{a}} v)}_{\oma}}
	\text{ for all } v \in H_*^1(\oma).
\end{align}
By reformulation of the constrained minimization problem~\eqref{eq:minimization} as saddle-point problem (see, e.g., \cite{ev15}), and using~\eqref{eq:stability}, one obtains that
\begin{align}\label{eq:est2res}
	\norm{\psi_\ell^{\bs{a}}\nabla u_\ell + \bs{\sigma}_\ell^{\bs{a}}}{\oma}
	\le \const{st} \norm{\psi_\ell^{\bs{a}}\nabla u_\ell + \bs{\sigma}^{\bs{a}}}{\oma}
	= \const{st} \norm{\nabla\rho^{\bs{a}}}{\oma}
	\quad\text{for all }\bs{a}\in\VV_\ell.
\end{align}

We will also need the Poincar\'e--Friedrichs inequality on $H_*^1(\oma)$,
\begin{align}\label{eq:PF}
	\norm{v}{\oma} \le \const{PF}(\oma) \diam(\oma) \norm{\nabla v}{\oma},
\end{align}
yielding
\begin{align}\label{eq:PFS}
	\norm{\nabla (\psi_\ell^{\bs{a}} v)}{\oma}
	\le \underbrace{\big[1 + \const{PF}(\oma) \diam(\oma) \norm{\nabla \psi_\ell^{\bs{a}}}{L^\infty(\oma)}\big]}_{=:\const{cont,PF}(\bs{a})}\norm{\nabla v}{\oma}
\end{align}
for all $\bs{a}\in\VV_\ell$ and $v\in H_*^1(\oma)$.
Here, the constants $\const{st}$ and $\const{cont,PF}:=\max_{\bs{a}\in\VV_\ell} \const{cont,PF}(\bs{a})$ depend only on the space dimension $d$ and shape regularity of $\TT_\ell$.
For the constant $\const{cont,PF}(\bs{a})$, this is stated, e.g., in~\cite{cf00,bps09}.
In particular, both involved constants $\const{st}$ and $\const{cont,PF}$ are independent of the polynomial degree $p$.
These developments lead to:

\begin{theorem}[local $p$-robust efficiency~\cite{bps09,ev15,ev20}]
For all $\bs{a}\in\VV_\ell$, it holds that
\begin{align}\label{eq:local_efficiency}
\begin{split}
	\norm{\psi_\ell^{\bs{a}}\nabla u_\ell + \bs{\sigma}_{\ell}^{\bs{a}}}{\oma}
	\le \const{st}\const{cont,PF} \norm{\nabla (u - u_\ell)}{\oma}
		+ \const{st} \osc_\ell(\bs{a}).
\end{split}
\end{align}
\end{theorem}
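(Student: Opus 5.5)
Starting from \eqref{eq:est2res}, it suffices to bound $\norm{\nabla\rho^{\bs{a}}}{\oma}$ by $\const{cont,PF}\norm{\nabla(u-u_\ell)}{\oma} + \osc_\ell(\bs{a})$; multiplying by $\const{st}$ then gives \eqref{eq:local_efficiency}. Since $\rho^{\bs{a}}\in H_*^1(\oma)$ solves \eqref{eq:res_lift_neumann}, I use the dual characterization
\begin{align*}
	\norm{\nabla\rho^{\bs{a}}}{\oma}
	= \sup_{v\in H_*^1(\oma),\,\norm{\nabla v}{\oma}=1}
	\Big[\dual{\Pi_\ell^{p}(\psi_\ell^{\bs{a}} f)}{v}_{\oma} - \dual{\nabla u_\ell}{\nabla (\psi_\ell^{\bs{a}} v)}_{\oma}\Big].
\end{align*}
Fix such a $v$. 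In the interior case the mean-value constraint, and in the boundary case the zero trace on $\partial\oma\setminus(\psi_\ell^{\bs{a}})^{-1}(\{0\})$, ensure that $\psi_\ell^{\bs{a}}v$, extended by zero, lies in $H_0^1(\Omega)$ and is supported in $\overline{\oma}$. Indeed, $\psi_\ell^{\bs{a}}$ vanishes on the remaining part of $\partial\oma$.

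I then insert the weak formulation \eqref{eq:weak} with test function $\psi_\ell^{\bs{a}}v$, which gives $\dual{f}{\psi_\ell^{\bs{a}}v}_{\oma} = \dual{\nabla u}{\nabla(\psi_\ell^{\bs{a}}v)}_{\oma}$. Adding and subtracting this term splits the functional into two pieces:
\begin{align*}
	\dual{\nabla(u-u_\ell)}{\nabla(\psi_\ell^{\bs{a}}v)}_{\oma}
	- \dual{\psi_\ell^{\bs{a}} f - \Pi_\ell^{p}(\psi_\ell^{\bs{a}} f)}{v}_{\oma}.
\end{align*}
The first piece is bounded by Cauchy--Schwarz together with \eqref{eq:PFS}, giving $\const{cont,PF}(\bs{a})\norm{\nabla(u-u_\ell)}{\oma}\le\const{cont,PF}\norm{\nabla(u-u_\ell)}{\oma}$.

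For the oscillation piece I work elementwise. On each $T\in\TT_\ell(\bs{a})$ the function $\psi_\ell^{\bs{a}} f - \Pi_\ell^{p}(\psi_\ell^{\bs{a}} f)$ is $L^2(T)$-orthogonal to constants, so $v$ may be replaced by $v-\Pi_\ell^0 v$ on $T$. The elementwise Poincaré inequality $\norm{v-\Pi_\ell^0 v}{T}\le \frac{h_T}{\pi}\norm{\nabla v}{T}$, the same one used in the reliability proof, then bounds $T$'s contribution by $\frac{h_T}{\pi}\norm{\psi_\ell^{\bs{a}} f - \Pi_\ell^{p}(\psi_\ell^{\bs{a}} f)}{T}\norm{\nabla v}{T}$. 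A discrete Cauchy--Schwarz over $T\in\TT_\ell(\bs{a})$ yields $\osc_\ell(\bs{a})\norm{\nabla v}{\oma}$ by definition \eqref{eq_osc_a}.

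Taking the supremum over $v$ and combining with \eqref{eq:est2res} concludes the proof. The only delicate point is verifying that $\psi_\ell^{\bs{a}}v\in H_0^1(\Omega)$ for boundary vertices, where $H_*^1(\oma)$ enforces vanishing only on the part of $\partial\oma$ where $\psi_\ell^{\bs{a}}\neq0$. Beyond that the argument is routine, and since $\const{st}$ and $\const{cont,PF}$ are $p$-independent, $p$-robustness is automatic.
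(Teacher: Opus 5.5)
Your proof is correct and is essentially the argument the paper leaves implicit behind ``These developments lead to'': you combine \eqref{eq:est2res} with the dual characterization of $\norm{\nabla\rho^{\bs{a}}}{\oma}$, test \eqref{eq:weak} with $\psi_\ell^{\bs{a}}v$, bound the error part via \eqref{eq:PFS}, and bound the oscillation part via the elementwise Poincar\'e inequality. One small correction: in the interior case it is the vanishing of $\psi_\ell^{\bs{a}}$ on $\partial\oma$, not the mean-value constraint, that puts $\psi_\ell^{\bs{a}}v$ in $H_0^1(\Omega)$; the mean-value constraint is what makes \eqref{eq:PFS} available.
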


Later, we will exploit the following resulting bound for the full element-wise error indicator
\begin{align}\label{eq:local_efficiency2}
	\eta_\ell(T) \le \const{st}\const{cont,PF} \sum_{\bs{a}\in\VV_T} \norm{\nabla (u - u_\ell)}{\oma}
		+ (\const{st} + 1) \sum_{\bs{a}\in\VV_T} \osc_\ell(\bs{a}),
\end{align}
which follows from the element bounds~\eqref{eq:triangle}--\eqref{eq:oscillations} and \eqref{eq:local_efficiency}.
This yields the global bound
\begin{align}\label{eq:efficiency}
	\eta_\ell(\TT_\ell) \le \const{eff} \big(\norm{\nabla (u - u_\ell)}{\Omega}^{2} + \osc_\ell(\VV_\ell)^{2}\big)^{1/2}
\end{align}
with a constant $\const{eff}\geq 1$ that depends only on the space dimension $d$ and the shape regularity of $\TT_\ell$.
If $f\in\mathcal{P}^{p-1}(\TT_\ell)$ (and thus $\osc_\ell(\TT_\ell) = 0 = \osc_\ell(\VV_\ell)$), the choice $\const{eff}={(d+1)\const{st} \const{cont,PF}}$ is possible.

\subsection{Discrete efficiency}\label{sec:discrete_efficiency}

In this section, we recall that the equilibrated flux $\bs{\sigma}_\ell$ also provides a lower bound for the distance between two consecutive Galerkin approximations, which is called discrete efficiency.
We first introduce for any vertex $\bs{a}\in\VV_\ell$ the local residual lifting
$r_{\ell+1}^{\bs{a}}\in \mathbb{V}_{\ell+1}^{\bs{a}}:=\mathcal{P}^p(\TT_{\ell+1})|_{\oma}\cap H_0^1(\oma) \subseteq \mathbb{V}_{\ell+1}|_{\oma}$ defined via
\begin{align}\label{eq:res_lift_dirichlet}
	\dual{\nabla r_{\ell+1}^{\bs{a}}}{\nabla v_{\ell+1}}_{\oma} = \dual{f}{v_{\ell+1}}_{} - \dual{\nabla u_\ell}{\nabla v_{\ell+1}}_{\oma}
	\quad\text{for all } v_{\ell+1}\in \mathbb{V}_{\ell+1}^{\bs{a}}.
\end{align}
Note that, in contrast to~\eqref{eq:res_lift_neumann}, \eqref{eq:res_lift_dirichlet} is discrete, it comes with a homogeneous Dirichlet boundary condition in place of a Neumann boundary condition, and there is no weighting by the hat function $\psi_\ell^{\bs{a}}$. One could also consider a discrete version of~\eqref{eq:res_lift_neumann}, cf. \cite[Definitions~4.8 and~4.9]{dev20}, though~\eqref{eq:res_lift_dirichlet} seems more natural. We already make the important remark that $\norm{\nabla r_{\ell+1}^{\bs{a}}}{\oma}$ is monotone in the sense that it is zero when $\TT_{\ell+1}$ restricted to $\oma$ is identical to $\TT_{\ell}$ and it increases for finer $\TT_{\ell+1}$ and thus finer $\mathbb{V}_{\ell+1}^{\bs{a}}$.
Let us henceforth use the convention $x/0 = \infty$ for $x > 0$ but $0/0=0$.

\begin{lemma}[local discrete efficiency~\cite{dv23}]
Let $\MM_\ell\subseteq\VV_\ell$ be a set of marked vertices and let $\TT_\ell(\MM_\ell):=\set{T\in\TT_\ell}{T\in\TT_\ell(\bs{a}) \text{ for } \bs{a} \in \MM_\ell}$ be the corresponding set of marked elements. For any conforming refinement $\TT_{\ell+1}$ of $\TT_\ell$ with Galerkin approximation $u_{\ell+1} \in \mathbb{V}_{\ell+1}$, it holds that
\begin{align}\label{eq:discrete_efficiency}
	\norm{\nabla(u_{\ell+1} - u_\ell)}{\Omega}
	\ge \frac{1}{(d+1)^{1/2} \const{lb}(\ell)} \eta_\ell(\MM_\ell)
\end{align}
for the (possibly infinite at this stage) constant $\const{lb}(\ell)$ defined as
\begin{subequations}\label{eq:Clb}\begin{align}
\const{lb}(\bs{a}) & := \frac{\eta_\ell(\bs{a})}{\norm{\nabla r_{\ell+1}^{\bs{a}}}{\oma}} \in [0,\infty] \label{eq:Clb_1}
\quad \text{for all } \bs{a} \in \MM_\ell,\\
\const{lb}(\ell) & := \max_{\bs{a}\in\MM_\ell} \const{lb}(\bs{a}) \in [0,\infty] \label{eq:Clb_2}.
\end{align}\end{subequations}
\end{lemma}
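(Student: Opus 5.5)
The plan is to exploit the Galerkin orthogonality of $u_{\ell+1}$ together with the definition~\eqref{eq:res_lift_dirichlet} of the local liftings. The key local observation is that, for each $\bs{a}\in\MM_\ell$, the lifting $r_{\ell+1}^{\bs{a}}$, extended by zero outside $\oma$, lies in $\mathbb{V}_{\ell+1}$. Hence it is an admissible test function in~\eqref{eq:discrete} on level $\ell+1$, which gives
\begin{align*}
	\norm{\nabla r_{\ell+1}^{\bs{a}}}{\oma}^2
	= \dual{f}{r_{\ell+1}^{\bs{a}}}_{\oma} - \dual{\nabla u_\ell}{\nabla r_{\ell+1}^{\bs{a}}}_{\oma}
	= \dual{\nabla(u_{\ell+1}-u_\ell)}{\nabla r_{\ell+1}^{\bs{a}}}_{\oma}.
\end{align*}
By Cauchy--Schwarz this yields $\norm{\nabla r_{\ell+1}^{\bs{a}}}{\oma}\le\norm{\nabla(u_{\ell+1}-u_\ell)}{\oma}$. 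In other words, $r_{\ell+1}^{\bs{a}}$ is the best local approximation, and the local discrete lifting is bounded by the local increment.

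Next, I convert this into a bound on the estimator using the definition~\eqref{eq:Clb}. If $\const{lb}(\ell)=\infty$, the claim is trivial, since the right-hand side is then zero under the convention $x/\infty=0$. Otherwise, for every $\bs{a}\in\MM_\ell$ we have $\eta_\ell(\bs{a})\le \const{lb}(\bs{a})\norm{\nabla r_{\ell+1}^{\bs{a}}}{\oma}\le\const{lb}(\ell)\norm{\nabla(u_{\ell+1}-u_\ell)}{\oma}$. This includes the degenerate case $0/0=0$: if $r_{\ell+1}^{\bs{a}}=0$ and $\const{lb}(\bs{a})=0$, then $\eta_\ell(\bs{a})=0$, and the inequality still holds.

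Squaring and summing over $\bs{a}\in\MM_\ell$ then gives
\begin{align*}
	\eta_\ell(\MM_\ell)^2\le\const{lb}(\ell)^2\sum_{\bs{a}\in\MM_\ell}\norm{\nabla(u_{\ell+1}-u_\ell)}{\oma}^2
	\le (d+1)\,\const{lb}(\ell)^2\norm{\nabla(u_{\ell+1}-u_\ell)}{\Omega}^2 .
\end{align*}
The last step uses finite overlap of vertex patches: each simplex $T$ belongs to exactly $d+1$ patches $\oma$, one per vertex. Taking square roots gives~\eqref{eq:discrete_efficiency}.

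The main point to get right is the conformity of the test function. We need that the zero extension of $r_{\ell+1}^{\bs{a}}\in\mathcal{P}^p(\TT_{\ell+1})|_{\oma}\cap H_0^1(\oma)$ is continuous across $\partial\oma$ and vanishes on $\partial\Omega$. This follows from its zero trace on $\partial\oma$ together with the fact that $\TT_{\ell+1}$ refines $\TT_\ell$, so that $\oma$ is a union of elements of $\TT_{\ell+1}$. The bookkeeping of the $\infty$ and $0/0$ conventions is the only other subtlety.
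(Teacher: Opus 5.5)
Your argument is correct and gives the same constant $(d+1)^{1/2}$, but it is organized differently from the paper's proof.

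The paper forms the summed lifting $r_{\ell+1}=\sum_{\bs{a}\in\MM_\ell} r_{\ell+1}^{\bs{a}}$ and tests with it once. It then uses the overlap factor on the lifting itself, through $\norm{\nabla r_{\ell+1}}{\omega_\ell(\MM_\ell)}^2\le(d+1)\sum_{\bs{a}\in\MM_\ell}\norm{\nabla r_{\ell+1}^{\bs{a}}}{\oma}^2$.

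You instead test with each $r_{\ell+1}^{\bs{a}}$ separately. A local Cauchy--Schwarz inequality gives the patchwise bound $\norm{\nabla r_{\ell+1}^{\bs{a}}}{\oma}\le\norm{\nabla(u_{\ell+1}-u_\ell)}{\oma}$. You then use the overlap factor on the increment $u_{\ell+1}-u_\ell$, where it reduces to counting the vertices of each element.

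Your route is slightly more elementary and yields a genuinely local statement for every marked patch. The paper's route produces, as a by-product, the intermediate bound~\eqref{eq:lower_comp_bound}: $\norm{\nabla(u_{\ell+1}-u_\ell)}{\Omega}\ge \sum_{\bs{a}\in\MM_\ell}\norm{\nabla r_{\ell+1}^{\bs{a}}}{\oma}^2/\norm{\nabla r_{\ell+1}}{\omega_\ell(\MM_\ell)}$. This bound is computable without $u_{\ell+1}$ and is never weaker than the quantity $(d+1)^{-1/2}\big(\sum_{\bs{a}\in\MM_\ell}\norm{\nabla r_{\ell+1}^{\bs{a}}}{\oma}^2\big)^{1/2}$ on which both arguments ultimately rely.
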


\begin{proof}
For convenience of the reader, we sketch the proof.
For $\bs{a}\in\MM_\ell$, extend $r_{\ell+1}^{\bs{a}}\in \mathbb{V}_{\ell+1}^{\bs{a}}$ from~\eqref{eq:res_lift_dirichlet} by zero outside of $\oma$. We then define the discrete residual lifting on the marked region by
\begin{align} \label{eq:rellp}
	r_{\ell+1}:=\sum_{\bs{a}\in\MM_\ell} r_{\ell+1}^{\bs{a}} \in \mathbb{V}_{\ell+1}|_{\omega_\ell(\MM_\ell)}\cap H_0^1(\omega_\ell(\MM_\ell))
\end{align}
with $\omega_\ell(\MM_\ell) = \omega(\TT_\ell(\MM_\ell))$ the open subdomain of $\Omega$ corresponding to the marked elements $\TT_\ell(\MM_\ell)$.
Then, it holds that
\begin{align*}
	\norm{\nabla(u_{\ell+1} - u_\ell)}{\Omega}
	\ge \norm{\nabla(u_{\ell+1} - u_\ell))}{\omega_\ell(\MM_\ell)}
	&= \sup_{v_{\ell+1}\in \mathbb{V}_{\ell+1}|_{\omega_\ell(\MM_\ell)}\setminus\{0\}} \frac{\dual{\nabla(u_{\ell+1}-u_\ell)}{\nabla v_{\ell+1}}_{\omega_\ell(\MM_\ell)}}{\norm{\nabla v_{\ell+1}}{\omega_\ell(\MM_\ell)}}
	\\
	&\ge \frac{\dual{\nabla(u_{\ell+1}-u_\ell)}{\nabla r_{\ell+1}}_{\omega_\ell(\MM_\ell)}}{\norm{\nabla r_{\ell+1}}{\omega_\ell(\MM_\ell)}}.
\end{align*}
Using the definition of $u_{\ell+1}$ and $r_{\ell+1}^{\bs{a}}$, we obtain the following identities for the nominator in the last term
\begin{align*}
	\dual{\nabla(u_{\ell+1}-u_\ell)}{\nabla r_{\ell+1}}_{\omega_\ell(\MM_\ell)}
	&= \sum_{\bs{a}\in\MM_\ell} \dual{\nabla(u_{\ell+1} - u_\ell)}{\nabla r_{\ell+1}^{\bs{a}}}_{\oma}
	\\
	&= \sum_{\bs{a}\in\MM_\ell} \big(\dual{f}{r_{\ell+1}^{\bs{a}}}_{\oma} - \dual{\nabla u_\ell}{\nabla r_{\ell+1}^{\bs{a}}}_{\oma}\big)
	\\
	&=\sum_{\bs{a}\in\MM_\ell} \norm{\nabla r_{\ell+1}^{\bs{a}}}{\oma}^2.
\end{align*}
Moreover, overlapping of the vertex patches and~\eqref{eq:rellp} give that
\begin{align*}
    \norm{\nabla r_{\ell+1}}{\omega_\ell(\MM_\ell)}^2 \leq (d+1) \sum_{\bs{a}\in\MM_\ell} \norm{\nabla r_{\ell+1}^{\bs{a}}}{\oma}^2.
\end{align*}

Thus, we are lead to the computable lower bound
\begin{align}\label{eq:lower_comp_bound}
	\norm{\nabla(u_{\ell+1} - u_\ell)}{\Omega}
	\ge \frac{\sum_{\bs{a}\in\MM_\ell} \norm{\nabla r_{\ell+1}^{\bs{a}}}{\oma}^2}{\norm{\nabla r_{\ell+1}}{\omega_\ell(\MM_\ell)}}
	\ge \frac{1}{(d+1)^{1/2}} \Big(\sum_{\bs{a}\in\MM_\ell} \norm{\nabla r_{\ell+1}^{\bs{a}}}{\oma}^2\Big)^{1/2}.
\end{align}
Finally, employing~\eqref{eq_eta_W_ell} and the definition of $\const{lb}(\ell)$~\eqref{eq:Clb}, we have that
\begin{align*}
	\eta_\ell(\MM_\ell) & = \Big(\sum_{\bs{a}\in\MM_\ell} \eta_\ell(\bs{a})^2\Big)^{1/2}
    = \Big(\sum_{\bs{a}\in\MM_\ell} \frac{\eta_\ell(\bs{a})^2}{\norm{\nabla r_{\ell+1}^{\bs{a}}}{\oma}^2}\norm{\nabla r_{\ell+1}^{\bs{a}}}{\oma}^2\Big)^{1/2} \\
    & \leq \const{lb}(\ell) \Big(\sum_{\bs{a}\in\MM_\ell} \norm{\nabla r_{\ell+1}^{\bs{a}}}{\oma}^2\Big)^{1/2}.
\end{align*}
In combination with~\eqref{eq:lower_comp_bound}, this leads to the assertion~\eqref{eq:discrete_efficiency}.
\end{proof}

We now discuss two cases where we can prove that $\const{lb}(\bs{a})$ from~\eqref{eq:Clb_1} is finite. First, supposing sufficiently strong (yet unknown) refinement of each marked patch $\TT_\ell(\bs{a})$, $\bs{a} \in \MM_\ell$, and sufficiently  small oscillations (recall~\eqref{eq_osc_a} and~\eqref{eq:reliability})
\begin{align*}
	\osc_\ell(\bs{a})
	\le \frac{1}{1+2\const{st}} \norm{\psi_\ell^{\bs{a}} \nabla u_\ell + \bs{\sigma}_\ell^{\bs{a}}}{\omega_\ell^{\bs{a}}},
\end{align*}
which is in particular satisfied if $f\in\mathcal{P}^{p-1}(\TT_\ell)$, where $\osc_\ell(\bs{a})=0$, \cite[Proposition~5.1]{dv23} states that
\begin{align}\label{eq:Clb_opt}
	\eta_\ell(\bs{a}) \le \underbrace{4 \const{st}\const{cont,PF}}_{=:\const{lb,opt}\in(0,\infty)} \norm{\nabla r_{\ell+1}^{\bs{a}}}{\oma}
	\quad\text{for all } \bs{a}\in\MM_\ell.
\end{align}
Here, $\const{st}$ and $\const{cont,PF}$ are the $p$-robust (finite) constants from~\eqref{eq:est2res} and~\eqref{eq:PFS}, respectively.

Second, if for all marked vertices $\bs{a}\in\MM_\ell$,
\begin{align}\label{eq:interior_node}
	\text{interior nodes are created in all $T\in\TT_\ell(\bs{a})$ as well as all $F\in\mathcal{F}_\ell^{\rm int}(\bs{a})$},
\end{align}
where the latter set denotes all faces in $\TT_\ell(\bs{a})$ that do not lie on the boundary $\partial\oma$,
and if $f\in\mathcal{P}^{p-1}(\TT_\ell)$,
then \cite[Proposition~5.2]{dv23} still guarantee that
\begin{align}\label{eq:Clb_int}
	\eta_\ell(\bs{a}) \le \const{lb,int}(p) \norm{\nabla r_{\ell+1}^{\bs{a}}}{\oma}
	\quad\text{for all } \bs{a}\in\MM_\ell,
\end{align}
albeit with a constant $\const{lb,int}(p)\in(0,\infty)$ that now depends on the polynomial degree $p$ in addition to the space dimension $d$ and shape regularity of $\TT_\ell$ and $\TT_{\ell+1}$.
As mentioned there, the requirement~\eqref{eq:interior_node} can be relaxed if $f\in\mathcal{P}^{q}(\TT_\ell)$ for some $q<p-1$ by imposing the creation of element and face bubble functions of degree $p-q$; see also~\cite[Lemma~11]{egp20}.

In general, the constant $\const{lb}(\ell)$ from~\eqref{eq:Clb_2} can take the value $\infty$, namely when $\eta_\ell(\bs{a})>0$ and $r_{\ell+1}^{\bs{a}} =0$ (equivalent to $\nabla r_{\ell+1}^{\bs{a}} =0$), but we already stress that $\const{lb}(\bs{a}) = \eta_\ell(\bs{a})/\norm{\nabla r_{\ell+1}^{\bs{a}}}{\oma}$ is fully computable without computing $u_{\ell+1}$ so that $\const{lb}(\ell)<\infty$ can be guaranteed --- at least for $f\in\mathcal{P}^{p-1}(\TT_\ell)$ --- by the adaptive algorithm through a suffient number of refinements to achieve the interior node property~\eqref{eq:interior_node}.
In this sense, discrete efficiency~\eqref{eq:discrete_efficiency} can be viewed as always attainable and \emph{conditionally} $p$-robust; we give implementation details in Algorithm~\ref{alg:afem} below.

\subsection{Discrete reliability}
Finally, we show that the equilibrated flux $\bs{\sigma}_\ell$ from~\eqref{eq:vertex_estimator} also provides an upper bound for the distance between two consecutive Galerkin approximations, which is called discrete reliability.
In the proof, we exploit a recent local projector $I_\ell:H_0^1(\Omega)\to \mathbb{V}_\ell$ from one of the authors~\cite{vohralik26} with a $p$-robust stability constant $\const{drel}\geq1$,
\begin{align}\label{eq:stable_projection}
	\norm{\nabla (1- I_\ell) v}{\Omega} \le \const{drel} \norm{\nabla v}{\Omega}
	\quad \text{for all } v\in H_0^1(\Omega).
\end{align}
More precisely, $\const{drel}$ depends only on the space dimension $d$ and shape regularity of $\TT_\ell$.

\begin{theorem}[$p$-robust discrete reliability]
Let $\widehat\TT_\ell$ be an arbitrary conforming refinement of $\TT_\ell$ with corresponding trial space $\widehat{\mathbb{V}}_{\ell}$ and Galerkin approximation $\widehat u_\ell\in\widehat{\mathbb{V}}_\ell$ of $u$.
With the second-order patch $\TT_\ell^2(\mathcal{S}_\ell):=\bigcup_{T\in\mathcal{S}_\ell}\bigcup_{T'\in\TT_\ell(T)} \TT_\ell(T')$ of a subset $\mathcal{S}_\ell \subseteq\TT_\ell$, it holds that
\begin{align}\label{eq:discrete_reliability}
	\norm{\nabla(\widehat u_\ell - u_\ell)}{\Omega}
	\le \const{drel} \eta_\ell(\TT_\ell^2(\TT_\ell \setminus \widehat \TT_\ell)).
\end{align}
\end{theorem}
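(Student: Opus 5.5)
Set $e:=\widehat u_\ell - u_\ell\in\widehat{\mathbb V}_\ell$ and $v:=e-I_\ell e = (1-I_\ell)e$. Galerkin orthogonality gives $\dual{\nabla(\widehat u_\ell-u_\ell)}{\nabla w_\ell}_\Omega=0$ for all $w_\ell\in\mathbb V_\ell$, so
\begin{align*}
	\norm{\nabla e}{\Omega}^2 = \dual{\nabla(\widehat u_\ell - u_\ell)}{\nabla v}_\Omega
	= \dual{f}{v}_\Omega - \dual{\nabla u_\ell}{\nabla v}_\Omega,
\end{align*}
where we used $v\in\widehat{\mathbb V}_\ell$ (since $\mathbb V_\ell\subseteq\widehat{\mathbb V}_\ell$) and the discrete problem on $\widehat\TT_\ell$. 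As in the proof of~\eqref{eq:reliability_one}, integrating by parts with the equilibrated flux $\bs{\sigma}_\ell\in\bs H(\div;\Omega)$ yields
\begin{align*}
	\norm{\nabla e}{\Omega}^2 = \dual{f-\nabla{\cdot}\bs{\sigma}_\ell}{v}_\Omega - \dual{\nabla u_\ell+\bs{\sigma}_\ell}{\nabla v}_\Omega .
\end{align*}

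The key step is localization. The projector $I_\ell$ from~\cite{vohralik26} is local, and I would use two of its properties: (i) $I_\ell$ reproduces $\mathbb V_\ell$, so $(1-I_\ell)w_\ell=0$ for $w_\ell\in\mathbb V_\ell$; and (ii) $(I_\ell w)|_T$ depends only on $w|_{\omega(\TT_\ell(T))}$ (or on a vertex patch neighbourhood). On every $T'\in\TT_\ell\cap\widehat\TT_\ell$ the function $e$ is a polynomial of degree $p$. If $e$ is a single polynomial of degree $p$ on the whole neighbourhood of $T$ that determines $(I_\ell e)|_T$, then by (i)–(ii) $v|_T=0$. Hence $v$ can be nonzero only on elements $T$ whose neighbourhood meets a refined element, i.e.\ $T\in\TT_\ell^1(\TT_\ell\setminus\widehat\TT_\ell)$, and so $\nabla v$ is supported there as well. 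For the precise locality of $I_\ell$ (it is built from patch-wise contributions with hat-function weights), one more layer may be needed, and this is where the second-order patch $\TT_\ell^2(\cdot)$ comes from; I would check that support is at worst $\TT_\ell^2(\TT_\ell\setminus\widehat\TT_\ell)$. Alternatively one can use $\dual{\nabla e}{\nabla v}$ only on $\operatorname{supp} v$.

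Next I would estimate elementwise on $\mathcal S:=\TT_\ell^2(\TT_\ell\setminus\widehat\TT_\ell)$. The term $\dual{\nabla u_\ell+\bs\sigma_\ell}{\nabla v}_T$ is bounded by $\norm{\nabla u_\ell+\bs\sigma_\ell}{T}\norm{\nabla v}{T}$. For the oscillation term, $f-\nabla{\cdot}\bs\sigma_\ell=f-\Pi_\ell^p f$ is $L^2(T)$-orthogonal to $\mathcal P^p(T)$, so it can be tested against $v-\Pi_\ell^p v$ and bounded via the Poincaré inequality by $\frac{h_T}{\pi}\norm{f-\nabla{\cdot}\bs\sigma_\ell}{T}\norm{\nabla v}{T}$. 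Summing over $T\in\mathcal S$ and applying Cauchy--Schwarz gives $\norm{\nabla e}{\Omega}^2\le\eta_\ell(\mathcal S)\norm{\nabla v}{\Omega}$. Finally,~\eqref{eq:stable_projection} yields $\norm{\nabla v}{\Omega}\le\const{drel}\norm{\nabla e}{\Omega}$, and dividing by $\norm{\nabla e}{\Omega}$ concludes the argument.

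The main obstacle is the locality step: showing rigorously that $(1-I_\ell)e$ vanishes on all elements outside $\TT_\ell^2(\TT_\ell\setminus\widehat\TT_\ell)$. This requires the precise structure of the projector in~\cite{vohralik26}, specifically which patch data determines $I_\ell w$ on an element and that it preserves piecewise polynomials locally. I would first verify a local-projection property of the form ``$w|_{\omega(\TT_\ell(T))}\in\mathcal P^p(\TT_\ell(T))\cap H^1 \Rightarrow (I_\ell w)|_T=w|_T$''. If only a patch-of-patch version holds, this explains exactly the second-order patch in the statement.
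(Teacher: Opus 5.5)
Your proposal is correct and follows essentially the same route as the paper. Both use Galerkin orthogonality to replace the test function by its image under $1-I_\ell$, integration by parts against $\bs{\sigma}_\ell$ with $\nabla{\cdot}\bs{\sigma}_\ell=\Pi_\ell^p f$, the elementwise Poincar\'e inequality for the oscillation term, locality of $I_\ell$ to restrict the sum to $\TT_\ell^2(\TT_\ell\setminus\widehat\TT_\ell)$, and the stability \eqref{eq:stable_projection}. The differences are cosmetic: you test directly with $e=\widehat u_\ell-u_\ell$ instead of taking the supremum over $\widehat v_\ell\in\widehat{\mathbb{V}}_\ell$. The locality step you flag as the main obstacle is also simply taken from the cited projector in the paper, namely that $(1-I_\ell)\widehat v_\ell=0$ on every $T$ with $\TT_\ell^2(T)\subseteq\TT_\ell\cap\widehat\TT_\ell$.
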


\begin{proof}\label{thm:discrete_reliability}
We start with the simple identity
\begin{align*}
	\norm{\nabla(\widehat u_\ell - u_\ell)}{\Omega}
	&= \sup_{\widehat v_\ell \in \widehat{\mathbb{V}}_\ell\setminus\{0\}} \frac{\dual{\nabla(\widehat u_\ell - u_\ell)}{\nabla \widehat v_\ell}_\Omega}{\norm{\nabla \widehat v_\ell}{\Omega}}.
\end{align*}
Galerkin orthogonality and the definition of $\widehat u_\ell$ show for the nominator that
\begin{align*}
	\dual{\nabla(\widehat u_\ell - u_\ell)}{\nabla \widehat v_\ell}_\Omega
	= \dual{\nabla(\widehat u_\ell - u_\ell)}{\nabla \underbrace{(1-I_\ell)\widehat v_\ell}_{=:\widehat e_\ell}}_\Omega
	= \dual{f}{\widehat e_\ell}_\Omega - \dual{\nabla u_\ell}{\nabla \widehat e_\ell}_\Omega.
\end{align*}
Since $\widehat e_\ell \in H_0^1(\Omega)$, integration by parts shows that $\dual{\nabla{\cdot}\bs{\sigma}_\ell}{\widehat e_\ell}_\Omega + \dual{\bs{\sigma}_\ell}{\nabla\widehat e_\ell}_\Omega = 0$,
which implies together with $\nabla{\cdot}\bs{\sigma}_\ell=\Pi_\ell^{p} f$ that
\begin{align*}
	\dual{\nabla(\widehat u_\ell - u_\ell)}{\nabla \widehat v_\ell}_\Omega
	&= \dual{f - \nabla{\cdot} \bs{\sigma}_\ell}{\widehat e_\ell}_\Omega - \dual{\bs{\sigma}_\ell + \nabla u_\ell}{\nabla\widehat e_\ell}_\Omega
	\\
	&= \dual{f - \nabla{\cdot} \bs{\sigma}_\ell}{(1-\Pi_\ell^{p})\widehat e_\ell}_\Omega - \dual{\bs{\sigma}_\ell + \nabla u_\ell}{\nabla\widehat e_\ell}_\Omega.
\end{align*}
The Cauchy--Schwarz inequality and the Poincar\'e inequality yield that
\begin{align*}
	\dual{\nabla(\widehat u_\ell - u_\ell)}{\nabla \widehat v_\ell}_\Omega
	\le \sum_{T\in\TT_\ell} \Big( \frac{h_T}{\pi} \norm{f - \nabla{\cdot} \bs{\sigma}_\ell}{T} + \norm{\bs{\sigma}_\ell + \nabla u_\ell}{T} \Big)\norm{\nabla\widehat e_\ell}{T}
	= \sum_{T\in\TT_\ell} \eta_\ell(T) \norm{\nabla\widehat e_\ell}{T}.
\end{align*}
The locality of the projector from~\cite{vohralik26} guarantees that $\widehat e_\ell =  (1-I_\ell) \widehat v_\ell = 0$ on all elements $T\in\TT_\ell$ such that $\TT_\ell^2(T):=\bigcup_{T'\in\TT_\ell(T)} \TT_\ell(T) \subseteq \TT_\ell \cap\widehat \TT_\ell$.
Hence, we see with the Cauchy--Schwarz inequality and stability~\eqref{eq:stable_projection} of the projector that
\begin{align*}
	\dual{\nabla(\widehat u_\ell - u_\ell)}{\nabla \widehat v_\ell}_\Omega
	\le \eta_\ell(\TT_\ell^2(\TT_\ell\setminus\widehat\TT_\ell)) \norm{\nabla\widehat e_\ell}{\omega(\TT_\ell^2(\TT_\ell\setminus\widehat\TT_\ell))}
	&\le \eta_\ell(\TT_\ell^2(\TT_\ell\setminus\widehat\TT_\ell)) \norm{\nabla\widehat e_\ell}{\Omega}
	\\
	&\le \const{drel} \eta_\ell(\TT_\ell^2(\TT_\ell\setminus\widehat\TT_\ell)) \norm{\nabla\widehat v_\ell}{\Omega}.
\end{align*}
This concludes the proof.
\end{proof}

\begin{remark}[$p$-robust discrete reliability of weighted-residual estimator]
Note that the arguments from the proof of Theorem~\ref{thm:discrete_reliability} also apply to the standard weighted-residual estimator, defined in~\eqref{eq:weighted-residual}.
However, we recall that the (discrete) efficiency constant for the latter estimator depends on the polynomial degree $p$ owing to the use of inverse inequalities in the proof.
\end{remark}

\section{Vertex-based adaptive algorithm, contraction \& optimal convergence}\label{sec:optimality}

In this section, we consider an $h$-adaptive version of the $hp$-adaptive algorithms investigated in~\cite{desv18,dev20,dv23},
and prove conditionally $p$-robust contraction as well as optimal convergence of this algorithm.
We use the newest-vertex bisection mesh refinement algorithm~\cite{maubach95,traxler97,stevenson08};
see also~\cite{dgs25} on how to choose suitable references edges on three-dimensional initial meshes and \cite{kpp13} for the insight that reference edges can be chosen arbitrarily on two-dimensional initial meshes.

\subsection{Vertex-based adaptive algorithm}\label{sec:algorithm_vertex}

We consider the following vertex-based adaptive algorithm.

\begin{algorithm}[vertex-based, conditionally $p$-robust]\label{alg:afem}
\textbf{Input:} Triangulation $\TT_0$, D\"orfler marking parameter $0 < \theta \le 1$, desired upper bound $\const{lb,max}\in(0,\infty)$ for the constants $\const{lb}(\bs{a})$ from~\eqref{eq:Clb_1}, and maximal number $\beta_{\rm max}$ of newest-vertex bisections applied in each refinement step, where $\beta_{\rm max}$ guarantees at least the interior node property~\eqref{eq:interior_node}, i.e., $\beta_{\rm max} \geq 3$ for $d=2$ and $\beta_{\rm max}\geq 6$ for $d=3$.\\
\textbf{Loop:} For each $\ell=0,1,2,\dots$, iterate the following steps {\rm (i)--(v)}:
\begin{enumerate}[\rm(i)]
\item Compute the Galerkin approximation $u_\ell\in\mathbb{V}_\ell$.
\item Compute the error indicators $\eta_\ell(\bs{a})$ from~\eqref{eq:reliability} for all vertices $\bs{a}\in\VV_\ell$.
\item Determine a minimal set of marked vertices $\MM_\ell\subseteq\VV_\ell$
satisfying the D\"orfler marking
\begin{align}\label{eq:doerfler}
	\theta\,\eta_\ell(\VV_\ell) \le \eta_\ell(\MM_\ell).
\end{align}
\item For each marked vertex $\bs{a}\in\MM_\ell$, initialize $\TT_{\ell+1}^{\bs{a},0}:=\TT_\ell(\bs{a})$ and iterate the following steps for each $\beta = 1,\dots, \beta_{\rm max}$:
\begin{itemize}
	\item Employ one newest-vertex bisection step for all $T\in\TT_{\ell+1}^{\bs{a},\beta-1}$ and a minimal number of additional newest-vertex bisections to ensure conformity of the resulting triangulation $\TT_{\ell+1}^{\bs{a},\beta}$.
	\item Compute $r_{\ell+1}^{\bs{a},\beta} \in \mathbb{V}_{\ell+1}^{\bs{a},\beta}:=\mathcal{P}^p(\TT_{\ell+1}^{\bs{a},\beta})\cap H_0^1(\oma)$ defined via
\begin{align*}
	\dual{\nabla r_{\ell+1}^{\bs{a},\beta}}{\nabla v_{\ell+1}}_{\oma} = \dual{f}{v_{\ell+1}}_{\oma} - \dual{\nabla u_\ell}{\nabla v_{\ell+1}}_{\oma}
	\quad\text{for all } v_{\ell+1}\in \mathbb{V}_{\ell+1}^{\bs{a},\beta}.
\end{align*}
	\item Stop the loop and set $\beta(\bs{a}):=\beta$ if
	\begin{align}\label{eq:stop_refine}
		\beta = \beta_{\rm max} \quad \text{or} \quad \const{lb}^{\beta}(\bs{a}):=\frac{\eta_\ell(\bs{a})}{\norm{\nabla r_{\ell+1}^{\bs{a},\beta}}{\oma}} \le \const{lb,max}.
	\end{align}
\end{itemize}
\item Employ a minimal number of newest-vertex bisections to obtain from $\TT_\ell$ a conforming triangulation $\TT_{\ell+1}:=\refine(\TT_\ell,\MM_\ell)$ such that $\TT_{\ell+1}|_{\oma}:=\set{T\in\TT_{\ell+1}}{T\subseteq\oma}$ is equal to or finer than $\TT_{\ell+1}^{\bs{a},\beta(\bs{a})}$ for all $\bs{a}\in\MM_\ell$.
\end{enumerate}
\textbf{Output:} Nested sequence of triangulations $\TT_\ell$, corresponding Galerkin approximations $u_\ell \in \mathbb{V}_\ell$, and
equilibrated-flux estimators $\eta_\ell(\VV_\ell)$ for all $\ell \in \N_0$.
\end{algorithm}

The marking step~(iii) can easily be realized by first sorting all error indicators $\eta_\ell(\bs{a})$, which, however, entails log-linear complexity.
Minimality of $\MM_\ell$ up to a factor $2$, which is still sufficient to prove optimal convergence, can be realized by a binning strategy in linear complexity~\cite[Section~5] {stevenson07}.
Recently, \cite{pp20} proposed a quick-selection algorithm that realizes (iii) in linear complexity.

The refinement step~(iv) is designed to guarantee several properties.
First of all, elements containing marked vertices are refined, i.e.,
$\TT_\ell(\MM_\ell) =\set{T\in\TT_\ell}{T\in\TT_\ell(\bs{a}) \text{ for } \bs{a} \in \MM_\ell} \subseteq \TT_\ell\setminus\TT_{\ell+1}$.
Next, the employment of the parameters $\const{lb,max}$ and $\beta_{\rm max}$ ensures through the condition~\eqref{eq:stop_refine} that the number of newest-vertex bisections is limited by $\beta_{\rm max}$ and that either $\const{lb}^\beta(\bs{a}) \le \const{lb,max}$ (from the second requirement from~\eqref{eq:stop_refine}), or, at least for $f\in\mathcal{P}^{p-1}(\TT_0)$, $\const{lb}^\beta(\bs{a}) \le \const{lb,int}(p)$ (by~\eqref{eq:Clb_int}, since $\beta_{\rm max}$ ensures the interior node property~\eqref{eq:interior_node}).
Recall the definition~\eqref{eq:res_lift_dirichlet} of $r_{\ell+1}^{\bs{a}}\in \mathbb{V}_{\ell+1}^{\bs{a}}:=\mathcal{P}^p(\TT_{\ell+1})|_{\oma}\cap H_0^1(\oma) \subseteq \mathbb{V}_{\ell+1}|_{\oma}$.
Since $\mathbb{V}_{\ell+1}^{\bs{a},\beta(\bs{a})}\subseteq \mathbb{V}_{\ell+1}^{\bs{a}}$ (some additional refinements may have been applied), by monotonicity, $\norm{\nabla r_{\ell+1}^{\bs{a},\beta(\bs{a})}}{\oma}\le \norm{\nabla r_{\ell+1}^{\bs{a}}}{\oma}$. Consequently, the constants $\const{lb}(\ell)$ from~\eqref{eq:Clb} become \emph{uniformly bounded} in $\ell$,
\begin{align*}
    \const{lb}(\ell) \le \max\{\const{lb,max},\const{lb,int}(p)\} \in [0,\infty) \quad \text{for all } \ell\in\N_0.
\end{align*}
Then, Algorithm~\ref{alg:afem} ensures discrete efficiency~\eqref{eq:discrete_efficiency} with constant $\max\{\const{lb,max},\const{lb,int}(p)\}\in[0,\infty)$, which is uniform in $\ell$, but potentially $p$-dependent.

\begin{remark}[choice of $\const{lb,max}$]
By~\eqref{eq:Clb_opt}, the optimal $p$-robust choice of $\const{lb,max}$ would be $4\const{st}\const{cont,PF}$, which guarantees that the second condition in~\eqref{eq:stop_refine} is reachable when $\beta_{\rm max}$ is sufficiently large, at least if $f\in\mathcal{P}^{p-1}(\TT_0)$. This would give $\const{lb}(\ell) \le \const{lb,max}$.
But $4\const{st}\const{cont,PF}$ cannot be computed exactly and might request $\beta_{\rm max}$ to grow with $p$.
If, on the other hand $\const{lb,max}$ is chosen too small, the second condition in~\eqref{eq:stop_refine} may not be reached at all.
Then, $\beta = \beta_{\rm max}$ applies and we have $\const{lb}(\ell) \le \const{lb,int}(p)$, which again might grow with $p$.
In numerical experiments in Section~\ref{sec:numerics} below, $\beta_{\rm max}=3$ and $\const{lb,max}= 10$ are used.
For these experiments, the second condition in~\eqref{eq:stop_refine}, i.e., $\const{lb}(\ell) \le \const{lb,max}= 10$, is numerically satisfied for all considered $p$; we actually observe $\const{lb}(\ell)< 1.6$ in Figure~\ref{Ex1:C_{lb}=3,4}) and Figure~\ref{Ex2:C_{lb}=3,4}.
\end{remark}

\begin{remark}[refinement strategy] \label{rem:refinement}
The only crucial property of the refinement strategy employed in Algorithm~\ref{alg:afem} is that it guarantees $\eta_\ell(\bs{a})/\norm{\nabla r_{\ell+1}^{\bs{a}}}{\oma} \lesssim 1$ for all $\bs{a}\in\MM_\ell$
at the expense of a uniformly bounded number of newest-vertex bisections (up to conforming closure) of the elements in the marked vertex patches; see \cite{egp20} for possible other strategies.
\end{remark}

\subsection{Error contraction}

It is well-known that discrete efficiency~\eqref{eq:discrete_efficiency} in combination with D\"orfler marking~\eqref{eq:doerfler} yields contraction of the error~\cite{doerfler96,mns00,cn12}.
In particular, this has been exploited in~\cite{desv18,dev20,dv23} for $hp$-adaptive versions of Algorithm~\ref{alg:afem} and leads here to a conditionally $p$-robust contraction.

\begin{theorem}[conditionally $p$-robust contraction] \label{thm:contraction}
Suppose that $f\in\mathcal{P}^{p-1}(\TT_0)$.
Then, Algorithm~\ref{alg:afem} yields that
\begin{align}\label{eq:contraction}
	\norm{\nabla (u - u_{\ell+1})}{\Omega}
	\le \underbrace{\Big(1- \frac{\theta^2}{(d+1)^2 \const{lb}^2}\Big)^{1/2}}_{=:q_{\rm ctr}} \norm{\nabla(u - u_\ell)}{\Omega}
	\quad \text{for all }\ell\in\N_0,
\end{align}
where the constant $\const{lb}$ satisfies $\const{lb} \le \max\{\const{lb,max},\const{lb,int}(p)\}$.
If the second condition from~\eqref{eq:stop_refine} is satisfied for all marked vertices $\bs{a}\in\MM_\ell$ and for all $\ell\in\N_0$, then the $p$-robust bound $\const{lb} \le \const{lb,max}$ holds true.
\end{theorem}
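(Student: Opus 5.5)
The plan is the standard Pythagoras-plus-discrete-efficiency argument. Since $\mathbb{V}_\ell\subseteq\mathbb{V}_{\ell+1}$ (nested newest-vertex bisection refinements), Galerkin orthogonality gives
\begin{align*}
	\norm{\nabla(u-u_{\ell+1})}{\Omega}^2 = \norm{\nabla(u-u_\ell)}{\Omega}^2 - \norm{\nabla(u_{\ell+1}-u_\ell)}{\Omega}^2.
\end{align*}
It therefore suffices to bound $\norm{\nabla(u_{\ell+1}-u_\ell)}{\Omega}$ from below by $\frac{\theta}{(d+1)\const{lb}}\norm{\nabla(u-u_\ell)}{\Omega}$.

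For this I chain three earlier results. Local discrete efficiency~\eqref{eq:discrete_efficiency} gives $\norm{\nabla(u_{\ell+1}-u_\ell)}{\Omega}\ge (d+1)^{-1/2}\const{lb}(\ell)^{-1}\eta_\ell(\MM_\ell)$. The D\"orfler criterion~\eqref{eq:doerfler} gives $\eta_\ell(\MM_\ell)\ge\theta\,\eta_\ell(\VV_\ell)$. The vertex-based reliability~\eqref{eq:reliability} gives $\eta_\ell(\VV_\ell)\ge(d+1)^{-1/2}\norm{\nabla(u-u_\ell)}{\Omega}$. Combining them yields $\norm{\nabla(u_{\ell+1}-u_\ell)}{\Omega}^2\ge \theta^2(d+1)^{-2}\const{lb}(\ell)^{-2}\norm{\nabla(u-u_\ell)}{\Omega}^2$, and substituting into the Pythagoras identity gives~\eqref{eq:contraction} with $\const{lb}:=\sup_\ell\const{lb}(\ell)$ (or $\const{lb}(\ell)$ itself). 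If $\eta_\ell(\MM_\ell)=0$, then $\eta_\ell(\VV_\ell)=0$ and $u=u_\ell$, so the claim is trivial; the conventions $0/0=0$ handle the degenerate ratios.

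The main step is to ensure $\const{lb}(\ell)<\infty$ with the asserted bound, uniformly in $\ell$. For each $\bs{a}\in\MM_\ell$, step~(iv) stops either because $\const{lb}^{\beta(\bs{a})}(\bs{a})\le\const{lb,max}$, or because $\beta=\beta_{\rm max}$. In the latter case the interior node property~\eqref{eq:interior_node} holds on $\TT_{\ell+1}^{\bs{a},\beta_{\rm max}}$. Since $f\in\mathcal{P}^{p-1}(\TT_0)\subseteq\mathcal{P}^{p-1}(\TT_\ell)$, \eqref{eq:Clb_int} then applies to that intermediate mesh and gives $\const{lb}^{\beta_{\rm max}}(\bs{a})\le\const{lb,int}(p)$. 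Because $\TT_{\ell+1}|_{\oma}$ is equal to or finer than $\TT_{\ell+1}^{\bs{a},\beta(\bs{a})}$, we have $\mathbb{V}_{\ell+1}^{\bs{a},\beta(\bs{a})}\subseteq\mathbb{V}_{\ell+1}^{\bs{a}}$. Both liftings are Galerkin projections of the same residual functional, so $\norm{\nabla r_{\ell+1}^{\bs{a},\beta(\bs{a})}}{\oma}\le\norm{\nabla r_{\ell+1}^{\bs{a}}}{\oma}$. Hence $\const{lb}(\bs{a})\le\const{lb}^{\beta(\bs{a})}(\bs{a})\le\max\{\const{lb,max},\const{lb,int}(p)\}$. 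If the second stopping condition was always the active one, only $\const{lb,max}$ appears, which gives the $p$-robust statement.
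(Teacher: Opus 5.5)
Your proposal is correct and follows the paper's own route: the Pythagoras identity from Galerkin orthogonality, then discrete efficiency~\eqref{eq:discrete_efficiency}, D\"orfler marking~\eqref{eq:doerfler} and vertex-based reliability~\eqref{eq:reliability}. You also bound $\const{lb}(\ell)$ uniformly via the stopping rule~\eqref{eq:stop_refine}, the interior-node bound~\eqref{eq:Clb_int}, and monotonicity of the local liftings, which is exactly the argument the paper gives in its discussion after Algorithm~\ref{alg:afem}; you just spell out the Galerkin-projection reason for that monotonicity.
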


\begin{proof}
Note that discrete efficiency~\eqref{eq:discrete_efficiency} is satisfied with $\const{lb}=\max\{\const{lb,max},\const{lb,int}(p)\}$ owing to~\eqref{eq:stop_refine} and~\eqref{eq:Clb_int}.
Galerkin orthogonality shows the Pythagoras identity
\begin{align}\label{eq:pythagoras}
	\norm{\nabla (u - u_{\ell+1})}{\Omega}^2 = \norm{\nabla(u - u_\ell)}{\Omega}^2 - \norm{\nabla(u_{\ell+1} - u_\ell)}{\Omega}^2.
\end{align}
Discrete efficiency~\eqref{eq:discrete_efficiency}, D\"orfler marking~\eqref{eq:doerfler}, and reliability~\eqref{eq:reliability} show that
\begin{align*}
	\norm{\nabla(u_{\ell+1} - u_\ell)}{\Omega}^2
	\ge \frac{1}{(d+1) \const{lb}^2} \eta_\ell(\MM_\ell)^2
	&\ge \frac{\theta^2}{(d+1) \const{lb}^2} \eta_\ell(\VV_\ell)^2
	\ge \frac{\theta^2}{(d+1)^2 \const{lb}^2} \norm{\nabla(u - u_\ell)}{\Omega}^2.
\end{align*}
This concludes the proof.
\end{proof}

\subsection{Optimality of Dörfler marking \& comparison lemma}

So far, we have seen that Dörfler marking \eqref{eq:doerfler} in the adaptive algorithm (and $f\in\mathcal{P}^{p-1}(\TT_\ell)$) implies contraction~\eqref{eq:contraction} of the error.
The next lemma  essentially states the converse implication for element-based D\"orfler marking.
In other words, Dörfler marking is not only sufficient for contraction, but in some sense even necessary.
This insight goes back to the seminal work~\cite{stevenson07}; see also~\cite{ckns08,ks11,cn12}.
Since the vertex-based oscillations require special treatment, we include a detailed proof.
In particular, we use the bound
\begin{align}\label{eq:Cosc}
	\osc_\ell(\bs{a}) \le \widetilde C_{\rm osc} \eta_\ell(\TT_\ell(\bs{a}))
	\quad \text{for all } \bs{a}\in\VV_\ell,
\end{align}
with a constant $\widetilde C_{\rm osc}$ that depends on the space dimension $d$, the shape regularity of $\TT_\ell$, and the polynomial degree $p$; see Remark~\ref{rem:Cosc} below.
We denote by $\T$ the set of all possible conforming newest-vertex bisection refinements of the initial mesh $\TT_0$.
Note that the meshes in $\T$ are uniformly shape-regular.

\begin{lemma}[$p$-robust optimality of D\"orfler marking]\label{lem:optimality_doerfler}
Abbreviate $\const{osc}:=(d+1)^{1/2} \widetilde C_{\rm osc}$.
Under the assumption
\begin{align}\label{eq_tt}
	0 < \theta < \tilde\theta_{\rm opt} := \frac{1}{\const{eff} (\const{drel}^{2} + \const{osc}^{2})^{1/2}},
\end{align}
any
\begin{align}\label{eq_qt}
	0<q_\theta^2\le \frac{1-\const{eff}^2  (\const{drel}^2 + \const{osc}^2 )\theta^2}{\const{eff}^2 (1 + \const{osc}^2)}
\end{align}
satisfies the implication
\begin{align}\label{eq:optimality_doerfler}
	\Big(\widehat\eta_\ell(\widehat\TT_\ell)
	\le q_\theta \eta_\ell(\TT_\ell)
	\,\,\,\Longrightarrow \,\,\,
	\theta \eta_\ell(\TT_\ell) \le \eta_\ell(\TT_\ell^2(\TT_\ell \setminus \widehat\TT_\ell))\Big)
	\text{ for all $\TT_\ell\in\T$ and refinements $\widehat\TT_\ell\in\T$}.
\end{align}
Note that only $\const{osc}$ potentially depends on the polynomial degree $p$, which, however, can be chosen as $\const{osc} = 0$ if $f\in\mathcal{P}^{p-1}(\TT_\ell)$, since then $\osc_\ell(\VV_\ell) = 0$.
\end{lemma}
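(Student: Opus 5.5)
We follow the classical argument of Stevenson, adapted to equilibrated fluxes and to the vertex-based oscillations. Throughout, $\widehat\eta_\ell$ denotes the element-based estimator on $\widehat\TT_\ell$ with Galerkin solution $\widehat u_\ell$, and we assume the hypothesis $\widehat\eta_\ell(\widehat\TT_\ell)\le q_\theta\,\eta_\ell(\TT_\ell)$. The idea is to bound $\eta_\ell(\TT_\ell)^2$ from above by three pieces: one controlled by $\eta_\ell(\TT_\ell^2(\TT_\ell\setminus\widehat\TT_\ell))$, one by $q_\theta^2\eta_\ell(\TT_\ell)^2$, and a remainder absorbed by the choice of $q_\theta$.

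\textbf{Step 1 (efficiency and the triangle split).} Start from the global efficiency bound~\eqref{eq:efficiency} on $\TT_\ell$:
\begin{align*}
\eta_\ell(\TT_\ell)^2\le \const{eff}^2\big(\norm{\nabla(u-u_\ell)}{\Omega}^2+\osc_\ell(\VV_\ell)^2\big).
\end{align*}
Galerkin orthogonality gives the Pythagoras identity $\norm{\nabla(u-u_\ell)}{\Omega}^2=\norm{\nabla(u-\widehat u_\ell)}{\Omega}^2+\norm{\nabla(\widehat u_\ell-u_\ell)}{\Omega}^2$. Each term is then handled separately.
\begin{itemize}
\item Reliability~\eqref{eq:reliability_one} on $\widehat\TT_\ell$ gives $\norm{\nabla(u-\widehat u_\ell)}{\Omega}\le\widehat\eta_\ell(\widehat\TT_\ell)\le q_\theta\eta_\ell(\TT_\ell)$.
\item Discrete reliability~\eqref{eq:discrete_reliability} gives $\norm{\nabla(\widehat u_\ell-u_\ell)}{\Omega}\le\const{drel}\,\eta_\ell(\TT_\ell^2(\TT_\ell\setminus\widehat\TT_\ell))$.
\end{itemize}

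\textbf{Step 2 (splitting the oscillations).} This is the main obstacle. We cannot simply bound $\osc_\ell(\VV_\ell)$ by $\eta_\ell(\TT_\ell)$, since that would leave nothing to absorb. Instead, split $\VV_\ell$ into two sets:
\begin{itemize}
\item $\mathcal{W}_1$, the vertices whose patch $\TT_\ell(\bs{a})$ contains some refined element, i.e.\ meets $\TT_\ell\setminus\widehat\TT_\ell$;
\item $\mathcal{W}_2$, the remaining vertices, whose patch lies entirely in $\TT_\ell\cap\widehat\TT_\ell$.
\end{itemize}
For $\bs{a}\in\mathcal{W}_1$, apply~\eqref{eq:Cosc}. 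Every $T\in\TT_\ell(\bs{a})$ shares the vertex $\bs{a}$ with a refined element, so $T\in\TT_\ell^2(\TT_\ell\setminus\widehat\TT_\ell)$. Summing, with each element counted at most $d+1$ times, gives
\begin{align*}
\osc_\ell(\mathcal{W}_1)^2\le\const{osc}^2\,\eta_\ell(\TT_\ell^2(\TT_\ell\setminus\widehat\TT_\ell))^2.
\end{align*}
For $\bs{a}\in\mathcal{W}_2$, $\bs{a}$ is also a vertex of $\widehat\TT_\ell$ with the same patch and the same hat function. Hence $\osc_\ell(\bs{a})=\widehat{\osc}_\ell(\bs{a})$, since the projections $\Pi^p$ agree on unrefined elements. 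Applying~\eqref{eq:Cosc} on $\widehat\TT_\ell$ and summing gives
\begin{align*}
\osc_\ell(\mathcal{W}_2)^2\le\const{osc}^2\,\widehat\eta_\ell(\widehat\TT_\ell)^2\le\const{osc}^2q_\theta^2\,\eta_\ell(\TT_\ell)^2.
\end{align*}
Here we use that $\widetilde C_{\rm osc}$ is uniform over $\T$, by shape regularity.

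\textbf{Step 3 (conclusion).} Collecting the bounds from Steps 1 and 2 yields
\begin{align*}
\eta_\ell(\TT_\ell)^2\le\const{eff}^2(1+\const{osc}^2)q_\theta^2\,\eta_\ell(\TT_\ell)^2+\const{eff}^2(\const{drel}^2+\const{osc}^2)\,\eta_\ell(\TT_\ell^2(\TT_\ell\setminus\widehat\TT_\ell))^2.
\end{align*}
By~\eqref{eq_qt}, we have $1-\const{eff}^2(1+\const{osc}^2)q_\theta^2\ge\const{eff}^2(\const{drel}^2+\const{osc}^2)\theta^2$. Inserting this and dividing by $\const{eff}^2(\const{drel}^2+\const{osc}^2)$ gives $\theta^2\eta_\ell(\TT_\ell)^2\le\eta_\ell(\TT_\ell^2(\TT_\ell\setminus\widehat\TT_\ell))^2$, which is the assertion.

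The condition~\eqref{eq_tt} is exactly what makes the right-hand side of~\eqref{eq_qt} positive, so an admissible $q_\theta>0$ exists. If $f\in\mathcal{P}^{p-1}(\TT_\ell)$, all oscillations vanish on $\TT_\ell$ and on all its refinements, so $\const{osc}=0$ is admissible. In that case the only constants involved are $\const{eff}$ and $\const{drel}$, which are $p$-robust.
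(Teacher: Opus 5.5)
Your proof is correct and follows the paper's argument. The ingredients are the same: efficiency, the Pythagoras identity, reliability on $\widehat\TT_\ell$, discrete reliability, and the fact that $\osc_\ell(\bs{a})=\widehat\osc_\ell(\bs{a})$ for vertices with unrefined patches, and they yield the same final inequality. The only difference is bookkeeping: the paper subtracts $\widehat\osc_\ell(\widehat\VV_\ell)^2$ and bounds $\osc_\ell(\VV_\ell)^2-\widehat\osc_\ell(\widehat\VV_\ell)^2$ by the oscillations on your set $\mathcal{W}_1$, whereas you split $\VV_\ell=\mathcal{W}_1\cup\mathcal{W}_2$ directly and bound the $\mathcal{W}_2$ part by $\const{osc}^2\widehat\eta_\ell(\widehat\TT_\ell)^2$, which amounts to the same estimate.
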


\begin{proof}
Let $0 < q_\theta < 1$ be for the moment a free parameter. Assume that
$\widehat\eta_\ell(\widehat\TT_\ell) \le q_\theta \eta_\ell(\TT_\ell)$.
According to efficiency~\eqref{eq:efficiency}, reliability~\eqref{eq:reliability_one} with constant $1$, and the oscillation bound
$\widehat\osc_\ell(\widehat\VV_\ell) \le \const{osc} \widehat\eta_\ell(\widehat\TT_\ell)$, being a consequence of~\eqref{eq:Cosc}, this proves that
\begin{align*}
	\big[\const{eff}^{-2} - (1 + \const{osc}^2) q_\theta^2\big]  \eta_\ell(\TT_\ell)^2
	&\le \big[ \norm{\nabla(u - u_\ell)}{\Omega}^2 + \osc_\ell(\VV_\ell)^2 \big] - (1 + \const{osc}^2) \widehat\eta_\ell(\widehat\TT_\ell)^2
	\\
	& \le \big[ \norm{\nabla(u - u_\ell)}{\Omega}^2 + \osc_\ell(\VV_\ell)^2 \big] - \big[ \norm{\nabla(u - \widehat u_\ell)}{\Omega}^2 + \widehat\osc_\ell(\widehat\VV_\ell)^2 \big].
\end{align*}
Galerkin orthogonality and the Pythagoras theorem show that
\begin{align*}
	\norm{\nabla(u - u_\ell)}{\Omega}^2 - \norm{\nabla(u - \widehat u_\ell)}{\Omega}^2
	= \norm{\nabla(\widehat u_\ell - u_\ell)}{\Omega}^2.
\end{align*}
Since the oscillations coincide for vertices $\bs{a}\in\VV_\ell$ with non-refined patch $\TT_\ell(\bs{a})$, the oscillation bound \eqref{eq:Cosc} gives that
\begin{align*}
	\osc_\ell(\VV_\ell)^2 -  \widehat\osc_\ell(\widehat\VV_\ell)^2
	&\le \osc_\ell\big(\set{\bs{a}\in\VV_\ell}{\TT_\ell(\bs{a}) \cap (\TT_\ell \setminus \widehat\TT_\ell) \neq \emptyset}\big)^2
	\\
	&\le \const{osc}^2 \eta_\ell\big(\bigcup\set{\TT_\ell(\bs{a})}{\bs{a}\in\VV_\ell, \, \TT_\ell(\bs{a}) \cap (\TT_\ell \setminus \widehat\TT_\ell) \neq \emptyset}\big)^2
	\\
	&\le \const{osc}^2 \eta_\ell(\TT_\ell^2(\TT_\ell \setminus \widehat\TT_\ell))^2.
\end{align*}
Combining the last three (in)equalities with the discrete reliability~\eqref{eq:discrete_reliability}, we are led to
\begin{align*}
	\big[ \const{eff}^{-2} - (1+\const{osc}^2) q_\theta^2 \big]  \eta_\ell(\TT_\ell)^2
	&\le \norm{\nabla(\widehat u_\ell - u_\ell)}{\Omega}^{2} + \const{osc}^2 \eta_\ell(\TT_\ell^2(\TT_\ell \setminus \widehat\TT_\ell))^2
	\\
	&\le (\const{drel}^2 + \const{osc}^2)   \eta_\ell(\TT_\ell^2(\TT_\ell \setminus \widehat\TT_\ell))^2.
\end{align*}
Hence,
\begin{align} \label{eq_oD}
	\frac{1 - \const{eff}^{2}(1 + \const{osc}^2) q_\theta^2 }{\const{eff}^{2}(\const{drel}^2 + \const{osc}^2 )}  \eta_\ell(\TT_\ell)^2
	\le \eta_\ell(\TT_\ell^2(\TT_\ell \setminus \widehat\TT_\ell))^2.
\end{align}
For any $0<\theta < \tilde\theta_{\rm opt}$ from~\eqref{eq_tt}, we may now fix $q_\theta$ as per~\eqref{eq_qt} such that $0 < q_\theta < 1$ and
\begin{align*}
	\theta^2 &\le \frac{1 - \const{eff}^2 (1 + \const{osc}^2) q_\theta^2}{\const{eff}^2  (\const{drel}^2 + \const{osc}^2 )}
	< \frac{1}{\const{eff}^2  (\const{drel}^2 + \const{osc}^2)}
	= \tilde\theta_{\rm opt}^2,
\end{align*}
using also the facts that $\const{eff} \geq 1$ and $\const{drel} \geq 1$.
Using the first inequality above and~\eqref{eq_oD} concludes the proof.
\end{proof}

For $s>0$, define
\begin{align}\label{eq:approximation}
	\norm{u}{\A_s} := \sup_{N\in\N_0} \min_{\TT_\star\in\T_N}\big[[p^{d}(N+1)]^s\norm{\nabla(u - u_\star)}{\Omega}\big]
\end{align}
with
\begin{align}
	\T_N:=\set{\TT\in\T}{\#\TT-\#\TT_0\le N}.
\end{align}
While $\norm{u}{\A_s}$ is usually defined without the factor $p^{ds}$ in the literature, e.g., \cite{stevenson07,ckns08,cfpp14}, we include it
so that the term $p^{d}(N+1)$ is equivalent to the number of degrees of freedom rather than the number of mesh elements, with constants that are independent of the polynomial degree $p$.
Note that $\norm{u}{\A_s}<\infty$ means that a convergence rate $s$ of the error with respect to the number of degrees of freedom is possible for optimally chosen meshes.

The next lemma, known as the comparison lemma in the literature, provides the key ingredient for the proof of optimal convergence.
It is a simple consequence of (quasi-)monotonicity of the Galerkin error and the overlay property of newest-vertex bisection;  see, e.g., \cite[Lemma~4.14]{cfpp14}.

 \begin{lemma}[comparison lemma~\cite{stevenson07}]\label{lem:comparison}
Let $\epsilon>0$ and $s>0$ with $\norm{u}{\A_s}<\infty$.
Then, for all $\TT_\ell\in\T$, there exists a refinement $\widehat\TT_\ell\in\T$ of $\TT_\ell$ such that
\begin{subequations}
\label{eq:comparison}
\begin{align} \label{eq:comparison1}
	\norm{\nabla(u - \widehat u_\ell)}{\Omega} &\le  \epsilon,
	\\ \label{eq:comparison2}
	p^d(\#\widehat\TT_\ell-\#\TT_\ell)&\le
	\norm{u}{\A_s}^{1/s}\,\epsilon^{-1/s}.
\end{align}
\end{subequations}
\end{lemma}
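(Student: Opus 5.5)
Given $\TT_\ell\in\T$, $\epsilon>0$ and $s>0$, we will build $\widehat\TT_\ell$ as the overlay of $\TT_\ell$ with a quasi-optimal mesh from the definition~\eqref{eq:approximation}. The candidate number of new elements is
\begin{align*}
	N:=\big\lfloor \norm{u}{\A_s}^{1/s}\epsilon^{-1/s}p^{-d}\big\rfloor\in\N_0 .
\end{align*}
By definition of $\norm{u}{\A_s}$ (the minimum over $\T_N$ is attained since $\T_N$ is finite), there exists $\TT_\epsilon\in\T_N$ with Galerkin approximation $u_\epsilon$ such that
\begin{align*}
	[p^d(N+1)]^s\norm{\nabla(u-u_\epsilon)}{\Omega}\le\norm{u}{\A_s}.
\end{align*}
Since $N+1> \norm{u}{\A_s}^{1/s}\epsilon^{-1/s}p^{-d}$, we get $[p^d(N+1)]^{s}>\norm{u}{\A_s}\epsilon^{-1}$, and hence $\norm{\nabla(u-u_\epsilon)}{\Omega}<\epsilon$. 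The degenerate case $\norm{u}{\A_s}=0$ is covered directly: then $u_\epsilon=u$ already on $\TT_0$, and one may take $\widehat\TT_\ell=\TT_\ell$.

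Next, let $\widehat\TT_\ell:=\TT_\ell\oplus\TT_\epsilon$ be the coarsest common refinement (overlay). For newest-vertex bisection this is again in $\T$ and satisfies the overlay estimate
\begin{align*}
	\#(\TT_\ell\oplus\TT_\epsilon)\le\#\TT_\ell+\#\TT_\epsilon-\#\TT_0,
\end{align*}
see \cite{stevenson07,ckns08,cfpp14}. We will quote this property rather than prove it.

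Since $\widehat\TT_\ell$ refines $\TT_\epsilon$, the spaces are nested, $\mathbb{V}_\epsilon\subseteq\widehat{\mathbb{V}}_\ell$. The Galerkin approximation $\widehat u_\ell$ is the energy-best approximation from $\widehat{\mathbb V}_\ell$ (C\'ea with constant one for the Poisson problem), so monotonicity gives
\begin{align*}
	\norm{\nabla(u-\widehat u_\ell)}{\Omega}\le\norm{\nabla(u-u_\epsilon)}{\Omega}\le\epsilon,
\end{align*}
which is~\eqref{eq:comparison1}. For~\eqref{eq:comparison2}, the overlay estimate yields
\begin{align*}
	\#\widehat\TT_\ell-\#\TT_\ell\le\#\TT_\epsilon-\#\TT_0\le N,
\end{align*}
and therefore $p^d(\#\widehat\TT_\ell-\#\TT_\ell)\le p^dN\le\norm{u}{\A_s}^{1/s}\epsilon^{-1/s}$.

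The only nontrivial ingredient is the overlay property of newest-vertex bisection, which is imported from the literature. The rest is bookkeeping, namely the choice of $N$ so that the factor $p^{d}$ in the definition of $\norm{u}{\A_s}$ cancels exactly. Because of this choice no extra constant appears in~\eqref{eq:comparison2}, and nothing depends on $p$.
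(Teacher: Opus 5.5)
Your proof is correct and follows the same route as the paper. The paper gives no details; it only invokes monotonicity of the Galerkin error and the overlay property of newest-vertex bisection (cf.~\cite[Lemma~4.14]{cfpp14}), which is exactly what you carry out. Your choice $N=\lfloor \norm{u}{\A_s}^{1/s}\epsilon^{-1/s}p^{-d}\rfloor$ makes the constant in \eqref{eq:comparison2} exactly one, and the separate treatment of $\norm{u}{\A_s}=0$ is harmless but unnecessary, since then $N=0$ and your general argument already applies.
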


\subsection{Optimal convergence}

The following theorem states optimal convergence of Algorithm~\ref{alg:afem} with respect to the number of degrees of freedom.
While the proof is standard, e.g., \cite[Section~8]{cfpp14} or \cite{ks11,cn12,Ber_Bof_Prag_Syng_a_post_20}, the novelty is conditional $p$-robustness of all involved constants provided that the second condition in~\eqref{eq:stop_refine} is satisfied for all $\ell\in\N_0$ and $\bs{a}\in\MM_\ell$; see also Remark~\ref{rem:s-dependence} below.

\begin{theorem}[optimal convergence]\label{thm:optimal_convergence}
Suppose that $f\in\mathcal{P}^{p-1}(\TT_0)$.
Then, for all
\begin{align}\label{eq:theta_opt2}
	0 < \theta < \theta_{\rm opt} := \frac{\tilde \theta_{\rm opt}}{(d+1) \const{st}\const{cont,PF}}
\end{align}
and all $s>0$, there exist constants $c_{\rm opt},\const{opt}>0$ such that the meshes $(\TT_\ell)_{\ell\in\N_0}$ generated by Algorithm~\ref{alg:afem} satisfy that
\begin{align}\label{eq:optimal_convergence}
	c_{\rm opt} \norm{u}{\A_s}
	\le\sup_{\ell\in\N_0}\big[[p^{d}(\#\TT_\ell-\#\TT_0+1)]^{s}\norm{\nabla(u - u_\ell)}{\Omega}\big]
	\le \const{opt} \norm{u}{\A_s}.
\end{align}
The constant $\const{opt}$ is given as
\begin{align}
	\const{opt} := \frac{2^s \const{eff}^2 (1-q_{\rm ctr}^{1/s})^{-s}\const{clos}^s (d+1)^s\const{nei}^{2s}}{[1-\const{eff}^2 \const{drel}^2 (d+1)^2 \const{st}^2\const{cont,PF}^2 \theta^2]^{1/2}}.
\end{align}
Here, $\const{clos}$ is the constant from the mesh closure estimate~\eqref{eq:closure} below and $\const{nei}$ is the maximal number of neighbors of an element.
Note that only $q_{\rm ctr}$ might depend on the polynomial degree $p$ --- if the second condition in~\eqref{eq:stop_refine} is not always satisfied.
The constant $c_{\rm opt}$ depends only on $ \#\TT_0$, $\const{child}$ from~\eqref{eq:child} below,  $s$, and, if there exists $\ell_0$ with $u_{\ell_0} = u$, also on $\ell_0$.
\end{theorem}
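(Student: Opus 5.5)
The plan is the standard optimality argument of \cite[Section~8]{cfpp14}, adapted to vertex-based marking.

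\textbf{Lower bound.} This is elementary. For fixed $\ell$, the mesh $\TT_\ell$ lies in $\T_N$ with $N=\#\TT_\ell-\#\TT_0$. Hence, for each $N$ in the range of $\#\TT_\ell-\#\TT_0$, the minimum in~\eqref{eq:approximation} is bounded by the supremum in the middle term of~\eqref{eq:optimal_convergence}. For $N$ between two consecutive values, I would use $\#\TT_{\ell+1}\le \const{child}\#\TT_\ell$ (from~\eqref{eq:child}) to lose only a constant factor depending on $\const{child}$ and $s$. For large $N$ (if the sequence stalls, i.e.\ $u_{\ell_0}=u$), the error vanishes. Tracking these cases produces the stated dependence of $c_{\rm opt}$ on $\#\TT_0$, $\const{child}$, $s$ and $\ell_0$.

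\textbf{Upper bound, step 1 (cost of one marking step).} Since $f\in\mathcal{P}^{p-1}(\TT_0)$, the oscillations vanish, so reliability~\eqref{eq:reliability_one} and efficiency~\eqref{eq:efficiency} give $\norm{\nabla(u-u_\ell)}{\Omega}\le\eta_\ell(\TT_\ell)\le\const{eff}\norm{\nabla(u-u_\ell)}{\Omega}$. Fix $\ell$, choose $\epsilon:=q_\theta\const{eff}^{-1}\norm{\nabla(u-u_\ell)}{\Omega}$, and apply Lemma~\ref{lem:comparison} to get a refinement $\widehat\TT_\ell$. Then $\widehat\eta_\ell(\widehat\TT_\ell)\le\const{eff}\epsilon\le q_\theta\eta_\ell(\TT_\ell)$.

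Lemma~\ref{lem:optimality_doerfler} must be applied at a modified parameter $\theta'$, because the algorithm marks \emph{vertices} while the lemma gives element Dörfler marking on $\mathcal{R}:=\TT_\ell^2(\TT_\ell\setminus\widehat\TT_\ell)$. I would take $\theta':=(d+1)\const{st}\const{cont,PF}\theta<\tilde\theta_{\rm opt}$, which is exactly where~\eqref{eq:theta_opt2} enters, and let $\mathcal{W}$ be the set of vertices of elements in $\mathcal{R}$. Then:
\begin{itemize}
\item The vertex indicators are controlled by local efficiency~\eqref{eq:local_efficiency} (with zero oscillation) and the global efficiency relation. This gives $\theta\eta_\ell(\VV_\ell)\le\theta(d+1)^{1/2}\const{st}\const{cont,PF}\norm{\nabla(u-u_\ell)}{\Omega}\le\theta'\eta_\ell(\TT_\ell)$, up to the precise powers of $(d+1)$.
\item The element estimator on $\mathcal{R}$ is controlled by vertex indicators via~\eqref{eq:triangle}, namely $\eta_\ell(\mathcal{R})^2\le(d+1)\eta_\ell(\mathcal{W})^2$.
\end{itemize}
Together these show that $\mathcal{W}$ satisfies the vertex Dörfler criterion~\eqref{eq:doerfler}. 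By minimality of $\MM_\ell$, $\#\MM_\ell\le\#\mathcal{W}\lesssim\const{nei}^2\#(\TT_\ell\setminus\widehat\TT_\ell)\le\const{nei}^2(\#\widehat\TT_\ell-\#\TT_\ell)$. By~\eqref{eq:comparison2}, this is at most $\const{nei}^2 p^{-d}\norm{u}{\A_s}^{1/s}\epsilon^{-1/s}$. The denominator $[1-\const{eff}^2\const{drel}^2\theta'^2]^{1/2}$ in $\const{opt}$ arises from choosing $q_\theta$ maximal in~\eqref{eq_qt} with $\const{osc}=0$.

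\textbf{Upper bound, step 2 (closure and summation).} The mesh closure estimate~\eqref{eq:closure} accounts for refinement step~(iv)--(v). Each marked vertex triggers at most $\beta_{\rm max}$ bisections of its patch, i.e.\ $O(\const{nei})$ elements, so $\#\TT_\ell-\#\TT_0+1\lesssim\const{clos}\const{nei}\sum_{j<\ell}\#\MM_j$. This gives
\[
\#\TT_\ell-\#\TT_0+1\lesssim p^{-d}\norm{u}{\A_s}^{1/s}\sum_{j<\ell}\norm{\nabla(u-u_j)}{\Omega}^{-1/s}.
\]
Contraction (Theorem~\ref{thm:contraction}) gives $\norm{\nabla(u-u_\ell)}{\Omega}\le q_{\rm ctr}^{\ell-j}\norm{\nabla(u-u_j)}{\Omega}$, so the geometric series is bounded by $(1-q_{\rm ctr}^{1/s})^{-1}\norm{\nabla(u-u_\ell)}{\Omega}^{-1/s}$. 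Raising to the power $s$ yields the claim. The $+1$ and the factor $2^s$ are absorbed, in the case $\ell=0$, by $\norm{u}{\A_s}\ge p^{ds}\norm{\nabla(u-u_0)}{\Omega}$.

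\textbf{Main obstacle.} The delicate point is the vertex-to-element translation in step~1. One has to check carefully that the factor $(d+1)\const{st}\const{cont,PF}$ — and not something worse depending on $p$ — suffices to convert vertex Dörfler marking into the element form used by Lemma~\ref{lem:optimality_doerfler}. One also has to check that $\#\mathcal{W}$ is bounded by $\#(\TT_\ell\setminus\widehat\TT_\ell)$ with a $p$-independent factor $\const{nei}^2$. Everything else is bookkeeping.
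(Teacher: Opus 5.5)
Your proposal is correct and follows essentially the same argument as the paper. Both use the comparison lemma, optimality of D\"orfler marking at $\tilde\theta=(d+1)\const{st}\const{cont,PF}\theta$, transfer to vertex marking via~\eqref{eq:el2ver} and minimality, then closure and the geometric series from contraction. The only minor deviation is that you bound $\eta_\ell(\VV_\ell)$ through local efficiency~\eqref{eq:local_efficiency} plus reliability instead of the local equivalence~\eqref{eq:ver2el_2}. This gives the identical factor $(d+1)^{1/2}\const{st}\const{cont,PF}$, so the powers of $(d+1)$ close exactly, $\theta\eta_\ell(\VV_\ell)\le(d+1)^{-1/2}\tilde\theta\,\eta_\ell(\TT_\ell)\le(d+1)^{-1/2}\eta_\ell(\mathcal{R})\le\eta_\ell(\mathcal{W})$, and the paper simply absorbs your extra $\const{nei}$ from the closure step into $\const{clos}$.
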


\begin{proof}
The first inequality in~\eqref{eq:optimal_convergence} is an elementary consequence of the child estimate~\cite{gss14}
\begin{align}\label{eq:child}
	\#\TT_{\ell+1} \le \const{child} \#\TT_\ell \quad\text{for all }\ell\in\N_0
\end{align}
with a constant $\const{child}>0$ that depends on $\TT_0$ and $\beta_{\rm max}$; see \cite[Proposition~4.15]{cfpp14} for the detailed argument.

For convenience of the reader, we provide the (standard) proof of the second inequality in~\eqref{eq:optimal_convergence}.
Without loss of generality, we assume that $\norm{u}{\A_s}<\infty$.
If $\norm{\nabla (u - u_{\ell_0})}{\Omega}=0$ for some $\ell_0\in\N_0$, then $\norm{\nabla(u - u_\ell)}{\Omega}=0$ for all $\ell\geq \ell_0$.
Moreover $[p^{d}(\#\TT_0-\#\TT_0+1)]^{s} \norm{\nabla (u - u_0)}{\Omega}\le \norm{u}{\A_s}$ is trivially satisfied.
Thus, it is sufficient to consider $0<\ell< \ell_0$ or $0<\ell$ if no such $\ell_0$ exists.

Let $\ell\in\N_0$ be fixed and let $j< \ell$.
For a given $\theta$ satisfying~\eqref{eq:theta_opt2}, abbreviate $\tilde\theta:=(d+1) \const{st}\const{cont,PF} \theta<\tilde\theta_{\rm opt}$.
We apply \eqref{eq:comparison} for the mesh $\TT_j$, where we choose $\epsilon = \const{eff}^{-1} q_{\tilde\theta}\norm{\nabla (u - u_j)}{\Omega}$ with (the maximal) $q_{\tilde \theta}$ of Lemma~\ref{lem:optimality_doerfler}.
In particular, \eqref{eq:comparison1} together with efficiency~\eqref{eq:efficiency} and reliability~\eqref{eq:reliability_one} yields that
\begin{align*}
	\widehat \eta_j(\widehat\TT_j)
	\le \const{eff} \norm{\nabla(u - \widehat u_j)}{\Omega}
	\le q_{\tilde\theta} \norm{\nabla(u - u_j)}{\Omega}
	\le q_{\tilde\theta} \eta_j(\TT_j).
\end{align*}
Hence, the implication~\eqref{eq:optimality_doerfler} shows that $\TT_j^2(\TT_j \setminus \widehat \TT_j)$ from discrete reliability~\eqref{eq:discrete_reliability} satisfies the Dörfler marking
$\tilde \theta \eta_j(\TT_j)\le\eta_j(\TT_j^2(\TT_j \setminus \widehat \TT_j))$.
Together with the local equivalences~\eqref{eq:ver2el_2} and \eqref{eq:el2ver} from below, this gives that
\begin{align*}
	\theta \eta_j(\VV_j) & \le (d+1)^{1/2} \const{st}\const{cont,PF} \theta \eta_j(\TT_j)
	= (d+1)^{-1/2} \tilde\theta \eta_j(\TT_j)
	\le (d+1)^{-1/2} \eta_j(\TT_j^2(\TT_j \setminus \widehat \TT_j))
	\\
	& \le \eta_j\big(\underbrace{\set{\bs{a}\in\VV_j}{\bs{a}\in \VV_T, \, T \in \TT_j^2(\TT_j \setminus \widehat \TT_j)}}_{=:\widetilde\MM_j}\big).
\end{align*}
Minimality of the set of marked vertices $\MM_j$ satisfying this D\"orfler marking (Algorithm~\ref{alg:afem} (iii)) yields that
\begin{align*}
	\#\MM_j \le \#\widetilde\MM_j \le (d+1) \#\TT_j^2(\TT_j\setminus\widehat\TT_j)
	\le (d+1)\const{nei}^2 (\#\widehat \TT_j - \#\TT_j).
\end{align*}
Thus,  \eqref{eq:comparison2} shows that
\begin{align*}
	\#\MM_j
	\le (d+1)\const{nei}^2 (\#\widehat \TT_j-\#\TT_j)
	\le (d+1)\const{nei}^2 \const{eff}^{1/s} p^{-d}\norm{u}{\A_s}^{1/s}q_{\tilde\theta}^{-1/s}\norm{\nabla (u - u_j)}{\Omega}^{-1/s}.
\end{align*}
The fact that $\#\TT_\ell>\#\TT_0$ and the mesh closure estimate~\cite{bdd04,stevenson08,kpp13} (depending only on $\TT_0$ and $\beta_{\rm max}$) thus yield  that
\begin{align}\label{eq:closure}\begin{split}
	\#\TT_\ell-\#\TT_0+1
	& \leq 2(\#\TT_\ell-\#\TT_0)
	\leq 2\const{clos} \sum_{j=0}^{\ell-1}\#\MM_j
	\\
	&\leq 2\const{clos} (d+1)\const{nei}^2\const{eff}^{1/s} p^{-d} \norm{u}{\A_s}^{1/s}q_{\tilde\theta}^{-1/s}\,\sum_{j=0}^{\ell-1}\norm{\nabla (u - u_j)}{\Omega}^{-1/s}.
\end{split}\end{align}
Contraction~\eqref{eq:contraction} and elementary calculus imply that
\begin{align*}
	\sum_{j=0}^{\ell-1}\norm{\nabla (u - u_j)}{\Omega}^{-1/s}\le (1-q_{\rm ctr}^{1/s})^{-1} \norm{\nabla (u - u_\ell)}{\Omega}^{-1/s};
\end{align*}
see~\cite[Lemma~4.9]{cfpp14}.
Overall, we thus end up with
\begin{align*}
	[p^{d}(\#\TT_\ell-\#\TT_0+1)]^{s} \norm{\nabla(u - u_\ell)}{\Omega}
	\leq 2^s (1-q_{\rm ctr}^{1/s})^{-s}\const{clos}^s (d+1)^s\const{nei}^{2s} \const{eff} \, q_{\tilde\theta}^{-1} \norm{u}{\A_s}.
\end{align*}
This concludes the proof.
\end{proof}

\begin{remark}[dependence on regularity $s$] \label{rem:s-dependence}
While, up to $q_{\rm ctr}$, all constants in Theorem~\ref{thm:optimal_convergence} are $p$-robust, and also $q_{\rm ctr}$ is $p$-robust provided that the second condition in~\eqref{eq:stop_refine} is always satisfied,
there is a dependence on the regularity $s$.
We stress that under certain assumptions, e.g., \cite{gm09}, it is possible to show that the choice $s=p/d$ yields $\norm{u}{s} < \infty$.
In this case, the constants in Theorem~\ref{thm:optimal_convergence} implicitly depend on $p$ through $s$.
\end{remark}

\begin{remark}[non-polynomial $f$] \label{rem:standard_proof}
If $f\not\in\mathcal{P}^{p-1}(\TT_0)$, then \eqref{eq:optimal_convergence} is still true (with a similar proof) for the estimator $\eta_\star(\TT_\star)$ instead of the error $\norm{\nabla(u - u_\star)}{\Omega}$ provided that linear convergence
\begin{align}\label{eq:linear_convergence}
	\eta_{\ell+n}(\TT_{\ell+n}) \le \const{lin} q_{\rm lin}^n \eta_\ell(\TT_\ell)
	\quad \text{for all }\ell,n\in\N_0
\end{align}
holds for some uniform $\const{lin}>0$ and $0<q_{\rm lin}<1$.
Linear convergence is always satisfied even for standard refinement, i.e., $\beta_{\rm max}=1$ in Section~\ref{sec:algorithm_vertex},
owing to local equivalence (see Appendix \ref{sec:equivalence_flux}--\ref{sec:equi2res}) of the equilibrated-flux estimators $\eta_\star(\TT_\star)$ and $\eta_\star(\VV_\star)$ to the weighted residual estimator $\zeta_\star(\TT_\star)$; see  \cite[Section~8]{cfpp14} or \cite{ks11,cn12,Ber_Bof_Prag_Syng_a_post_20} for the standard argument.
However, in contrast to Theorem~\ref{thm:contraction}, this argument inevitably leads to $p$-dependent constants $\const{lin}$ and $q_{\rm lin}$.
As a matter of fact, our proof of optimality of D\"orfler marking (Lemma~\ref{lem:optimality_doerfler}) 
also yields potentially $p$-dependent constants if $f\not\in\mathcal{P}^{p-1}(\TT_0)$ because of the use of the oscillation bound~\eqref{eq:Cosc}.
We further mention that, in case of $f\not\in\mathcal{P}^{p-1}(\TT_0)$, the estimator is at least equivalent to the total error, i.e.,
\begin{align}\label{eq:releff}
	\norm{\nabla(u - u_\star)}{\Omega} +\osc_\star(\TT_\star) \le 2 \eta_\star(\TT_\star) \lesssim \norm{\nabla(u - u_\star)}{\Omega} + \osc_\star(\TT_\star) \quad\text{for all }\TT_\star \in\T,
\end{align}
which is a consequence of equivalence to the weighted residual estimator $\zeta_\star(\TT_\star)$ and efficiency of the latter.
Standard arguments even show that
\begin{align}\label{eq:equivalent_classes}
	\norm{u}{\A_s} \le \norm{\eta}{\A_s} \lesssim \norm{u}{\A_s} + \norm{{\rm osc}}{\A_s},
\end{align}
where $\norm{\eta}{\A_s}$ and $\norm{{\rm osc}}{\A_s}$ are defined as in~\eqref{eq:approximation} for the estimator $\eta_\star(\TT_\star)$ and the oscillations $\osc_\star(\TT_\star)$;
see, e.g., \cite[Proposition~4.6]{cfpp14}.
\end{remark}

\begin{remark}[general diffusion matrix]
The results of this manuscript readily extend to diffusion problems $-\nabla\cdot (\bs{A} \nabla u) = f$ with $\TT_0$-piecewise constant uniformly positive definite diffusion matrix $\bs{A}\in\R^{d\times d}$. 
Also an extension to $\TT_0$-piecewise polynomial $\bs{A}$ of some fixed degree $q$ is possible if the polynomial degree of the flux reconstruction $\bs{\sigma}_\ell$ is chosen as $p+q$; see, e.g., the arguments in~\cite[Section~5]{bdfgps26}.
Then, multiplicative constants depend additionally on $q$, and the analysis also allows for $f\in\mathcal{P}^{p+q-1}(\TT_0)$. 
For general diffusion matrices and right-hand sides, one could include additional loops in Algorithm~\ref{alg:afem} to resolve them; see, e.g., \cite{stevenson07,bcnv24}. 
However, a detailed analysis of this case goes beyond the scope of the manuscript. 
\end{remark}

\section{Numerical experiments}\label{sec:numerics}

In this section, we present numerical examples to illustrate our theoretical results.
We consider Algorithm~\ref{alg:afem} with $\beta_{\rm max} = 3$ and $\const{lb,max}=10$.
It starts from a coarse mesh consisting of $6$ triangles for Example~1 and $24$ triangles for Example~2 and uses the error indicators $\eta_\ell(\bs{a})$ from~\eqref{eq:reliability} to mark vertex patches for refinement through the D\"orfler marking~\eqref{eq:doerfler} with $\theta=0.3$.

\subsection{Example 1: Adaptive FEM on L-shape with known solution}

We select $f=0$ and Dirichlet boundary conditions on the L-shaped domain $\Omega := (-1,1)^2 \setminus \big([0,1]\times[-1,0]\big)$, so that the exact solution is in polar coordinates
\begin{equation}
u(r,\vartheta) = r^{\frac23} \sin(2\vartheta/3), \qquad \vartheta\in (0,\frac{3\pi}{2}).
\end{equation}

We mention that the inhomogeneous Dirichlet boundary condition is imposed by computing an $hp$-optimal $H^1$-conforming projection-based interpolation of the boundary datum onto the finite element space; we refer to \cite[Section 5.1]{DemkowiczBuffa05} for details. 
We do not take the resulting oscillation terms directly into account in the estimator and the algorithm, as the corresponding computed values are negligibly small (not displayed).
Note that the local patch problems associated with Dirichlet boundary vertices does not require any modification. We refer to \cite[Definition 3.5]{DolejsiErnVohralik16} for further details. 

We first test the convergence rate of the above adaptive algorithm with polynomial degrees $p\in\{1,2,3,4\}$.
The energy error $\norm{\nabla(u - u_\ell)}{\Omega}$ and the {\sl a posteriori} error estimator $\eta_\ell(\TT_\ell)$ are reported in Figure~\ref{Ex1:error_estimator_p=1,2,3,4} as a function of DoFs.
We observe that the error and the estimator converge at the optimal rate $\mathcal{O}(\textup{DoFs}^{-\frac{p}{2}})$.

\begin{figure}[!tb]
\begin{center}
\begin{tabular}{cc}
\includegraphics[width=0.46\linewidth]{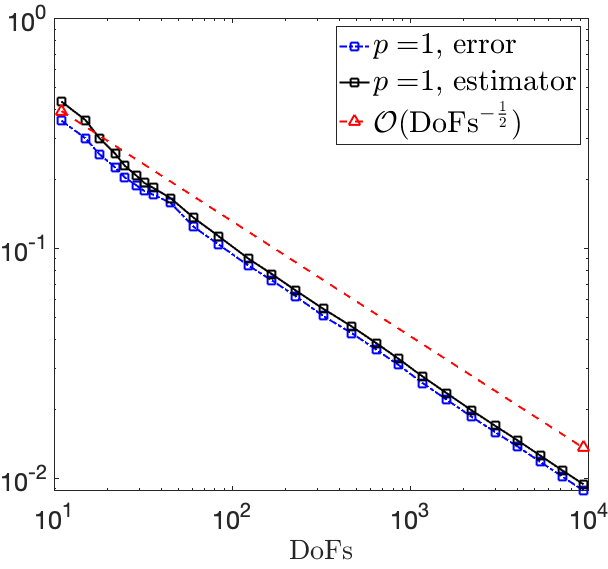} &
\includegraphics[width=0.46\linewidth]{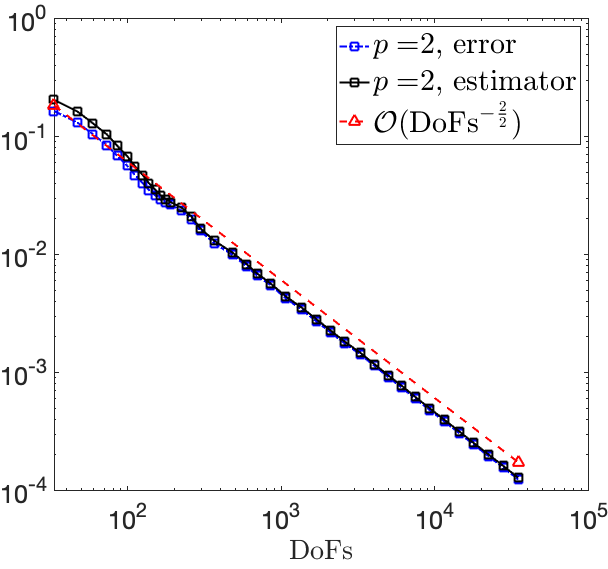} \\
\includegraphics[width=0.46\linewidth]{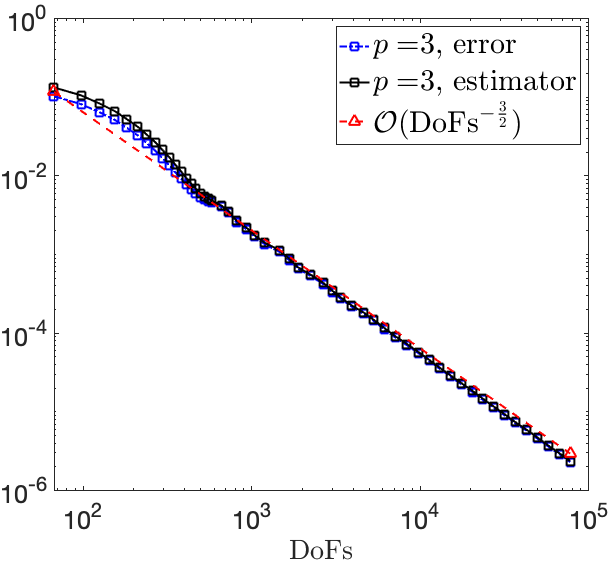} &
\includegraphics[width=0.46\linewidth]{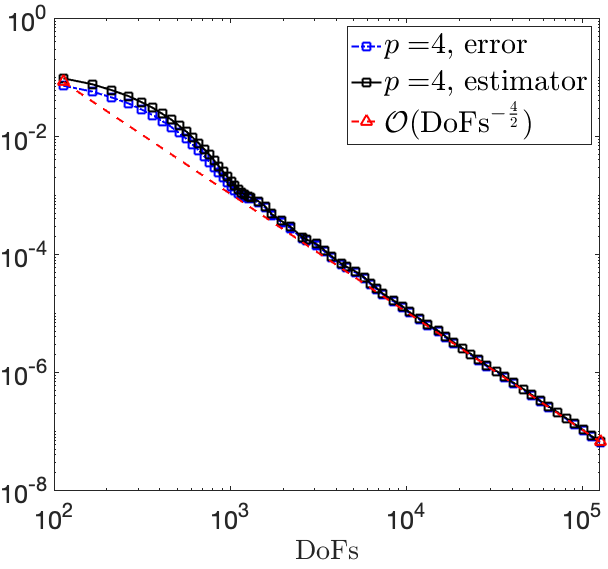}
\end{tabular}
\end{center}
\caption{Example 1. Energy error and {\sl a posteriori} error estimator as a function of DoFs for polynomial degrees $p\in\{1,2,3,4\}$.}\label{Ex1:error_estimator_p=1,2,3,4}
\end{figure}

Moreover, we observe in the left panel of Figure~\ref{Ex1:effectivity p=1,2,3,4} that the effectivity index of the estimator, i.e., the ratio $\eta_\ell(\TT_\ell) / \norm{\nabla(u - u_\ell)}{\Omega}$, is very close to the optimal value of $1$ for all polynomial degrees $p\in\{1,2,3,4\}$.
To gain further insight, we report in the right panel of Figure~\ref{Ex1:effectivity p=1,2,3,4} the effectivity index as a function of the polynomial degree $p\in\{1,\dots,13\}$ on two meshes consisting of $12$ and $48$ triangles.
The values lie between $1.2$ and $1.5$, even in this pre-asymptotic mesh regime, which numerically confirms the $p$-robustness of the equilibrated-flux estimator.

\begin{figure}[!tb]
\begin{center}
\includegraphics[width=0.45\linewidth]{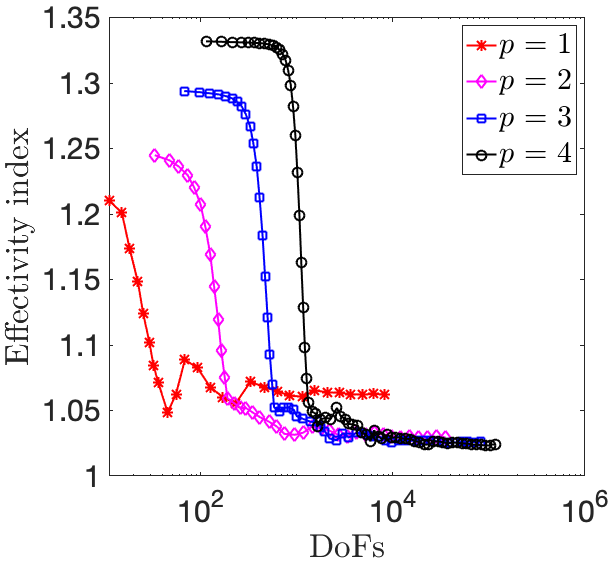}
\qquad
\includegraphics[width=0.45\linewidth]{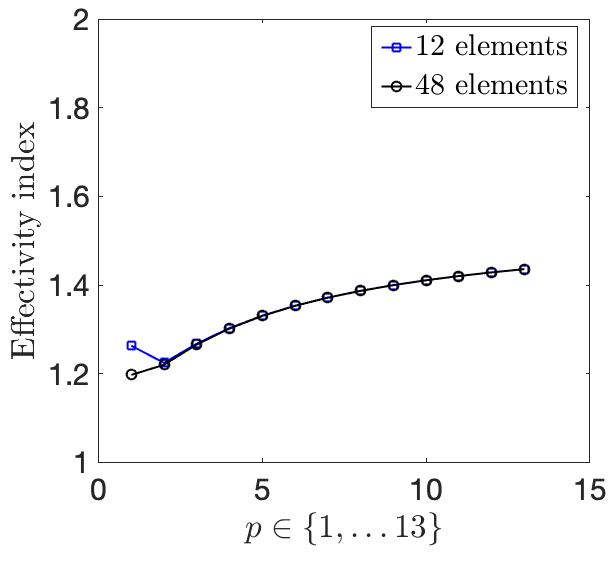}
\end{center}
\caption{Example 1. Effectivity index of estimator $\eta_\ell(\TT_\ell)$ as a function of DoFs for polynomial degrees $p\in\{1,2,3,4\}$ (left).
Effectivity index as a function of polynomial degrees $p\in\{1, \dots,13\}$ on a mesh composed of 12 and 48 triangles (right).}\label{Ex1:effectivity p=1,2,3,4}
\end{figure}

Next, we numerically test the value of the constants $\const{lb}(\bs{a}) = \eta_\ell(\bs{a})/\norm{\nabla r_{\ell+1}^{\bs{a}}}{\oma}$ defined in~\eqref{eq:Clb_1} for all marked vertices $\bs{a} \in\MM_\ell$.
In Figure~\ref{Ex1:C_{lb}=3,4}, we display the maximal and minimal values of $\const{lb}(\bs{a})$ over all $\bs{a}\in\MM_\ell$ for all $p\in \{1,\dots,4\}$.
The values lie between $0.2$ and $1.6$. Thus, the second condition in~\eqref{eq:stop_refine} is always satisfied in this test case. We also mention that only for $p=1$, the adaptive algorithm performs more than one newest vertex bisection $\beta>1$ to guarantee the second requirement from~\eqref{eq:stop_refine},
i.e., it is occasionally required to reach $\beta=2,3$, while $\beta=1,2$ can even lead to $\const{lb}(\bs{a}) = \infty$.

\begin{figure}[!tb]
\begin{center}
\begin{tabular}{cc}
\includegraphics[width=0.45\linewidth]{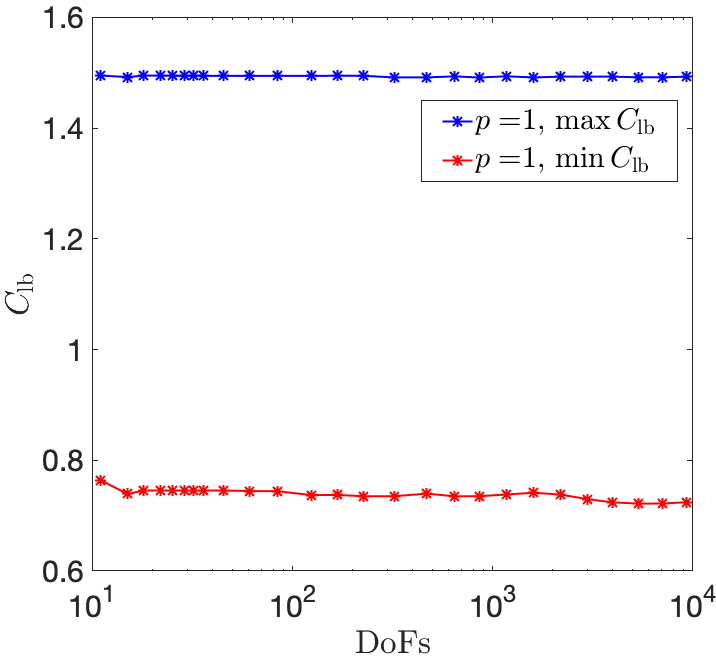}
&
\includegraphics[width=0.45\linewidth]{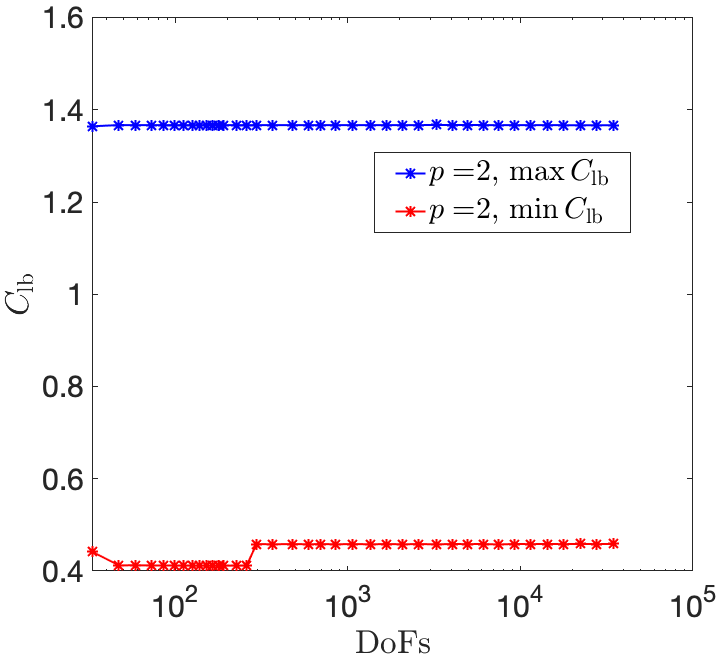} \\
\includegraphics[width=0.45\linewidth]{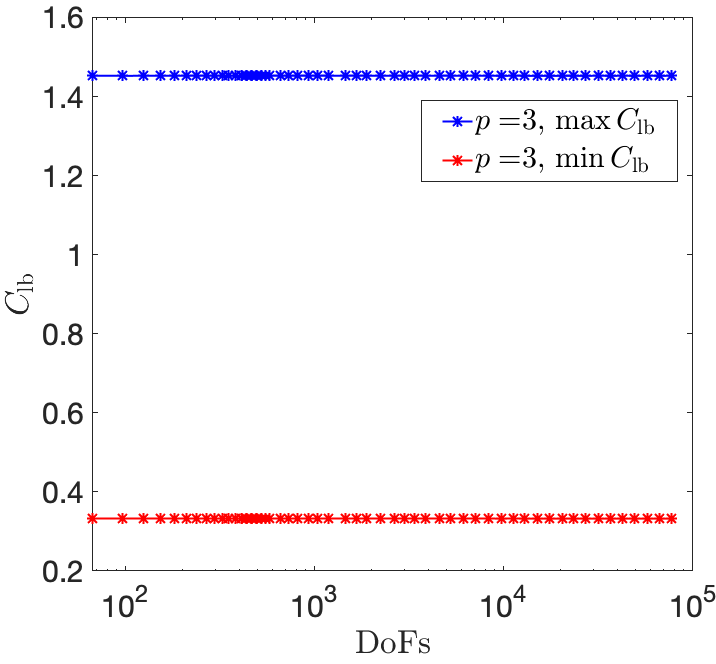}
&
\includegraphics[width=0.45\linewidth]{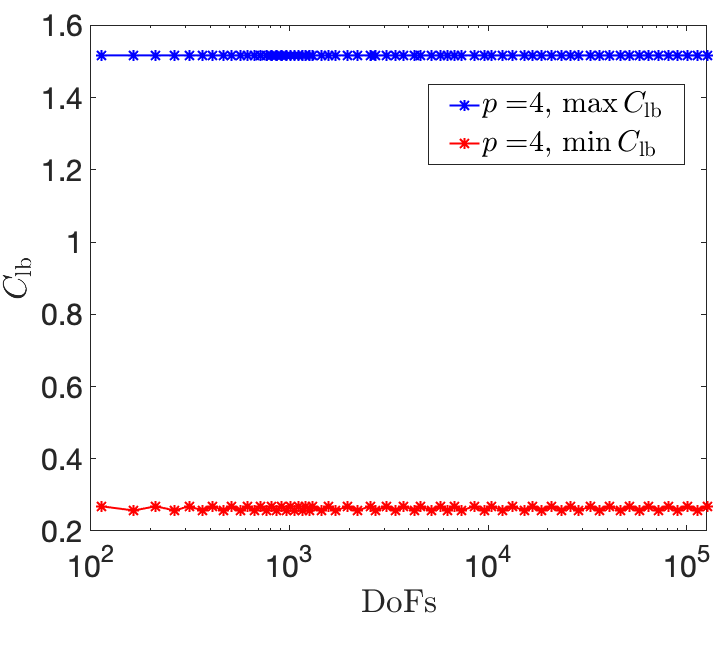}
\end{tabular}
\end{center}
\caption{Example 1. Maximal and minimal values over all marked vertices $\bs{a}\in\MM_\ell$ of the local stability constants $\const{lb}(\bs{a})$ from~\eqref{eq:Clb_1} for polynomial degrees $p\in \{1,2,3,4\}$.}\label{Ex1:C_{lb}=3,4}
\end{figure}

Finally, we numerically test the value of the effectivity indices of the estimated error reduction factor $q_{\rm ctr}$ defined in~\eqref{eq:error_contraction}, i.e., the ratio $q_{\rm ctr} / \big(\norm{\nabla (u - u_{\ell+1})}{\Omega}/\norm{\nabla (u - u_{\ell})}{\Omega}\big)$. In Figure~\ref{Ex1: effectivity for error reduction, p=1,2,3,4}, we display these ratios for all $p\in \{1,\dots,4\}$.
They lie between $1$ and $1.6$.

\begin{figure}[!tb]
\begin{center}
\includegraphics[width=0.45\linewidth]{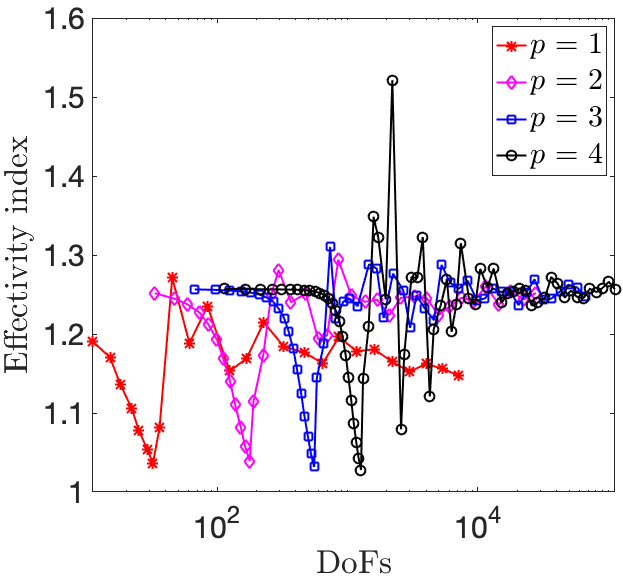}
\end{center}
\caption{Example 1. Effectivity index of error reduction factor $q_{\rm ctr}$ as a function of DoFs for polynomial degrees $p\in\{1,2,3,4\}$.}\label{Ex1: effectivity for error reduction, p=1,2,3,4}
\end{figure}

\subsection{Example 2: Adaptive FEM on cross-shape without unkown solution}

We select  homogeneous Dirichlet boundary conditions on the cross-shaped domain $\Omega := \set{(x_1,x_2)\in (-1,1)^2}{|x_1|\not\in [0.5,1] \text{ or }  |x_2| \not\in [0.5,1]}$ and choose the forcing term as
\begin{equation}
f :=1 .
\end{equation}
The analytical solution of this example is not known.

We first test the convergence rate of the above adaptive algorithm with polynomial degrees $p\in\{1,2,3,4\}$.
The {\sl a posteriori} error estimator  $\eta_\ell(\TT_\ell)$ is reported in Figure~\ref{Ex2:error_estimator_p=1,2,3,4} as a function of DoFs.
We observe that the estimator converges at the optimal rate $\mathcal{O}(\textup{DoFs}^{-\frac{p}{2}})$.
Moreover, we observe in Figure~\ref{Ex2:mesh p=1,2,3,4} that the adaptive algorithm constructs strongly graded meshes near the four reentrant corner points $(0.5,0.5)$, $(-0.5,0.5)$, $(0.5,-0.5)$, and $(-0.5,-0.5)$. The higher the polynomial degree $p$, the stronger the mesh grading becomes.

Next, we numerically test the value of the constants $\const{lb}(\bs{a}) = \eta_\ell(\bs{a})/\norm{\nabla r_{\ell+1}^{\bs{a}}}{\oma}$ defined in~\eqref{eq:Clb_1} for all marked vertices $\bs{a} \in\MM_\ell$.
In Figure~\ref{Ex2:C_{lb}=3,4}, we display the maximal and minimal values of $\const{lb}(\bs{a})$ over all $\bs{a}\in\MM_\ell$ for all $p\in \{1,\dots,4\}$.
The values lie between $0.2$ and $1.6$. Thus, the second condition in~\eqref{eq:stop_refine} is always satisfied in this test case.
We also mention that only for $p=1$, the adaptive algorithm performs more than one newest vertex bisection $\beta>1$ to guarantee the second requirement from~\eqref{eq:stop_refine},
i.e., it is occasionally required to reach $\beta=2,3$, while $\beta=1,2$ can even lead to $\const{lb}(\bs{a}) = \infty$.

\begin{figure}[!tb]
\begin{center}
\begin{tabular}{cc}
\includegraphics[width=0.46\linewidth]{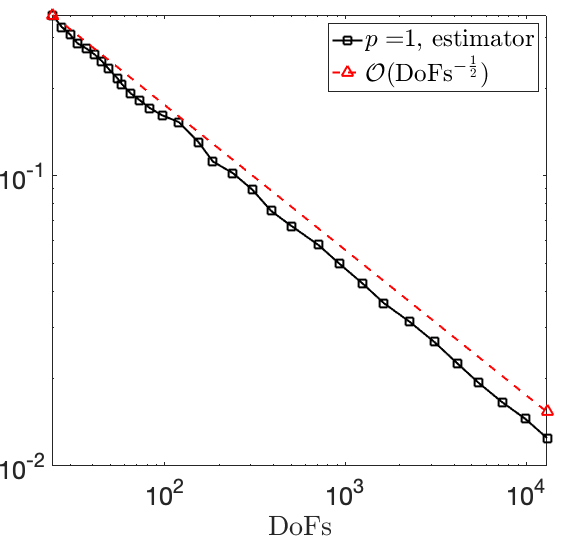} &
\includegraphics[width=0.46\linewidth]{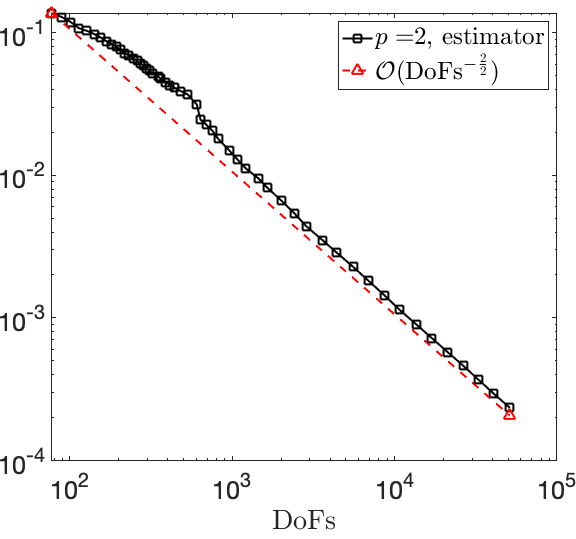} \\
\includegraphics[width=0.46\linewidth]{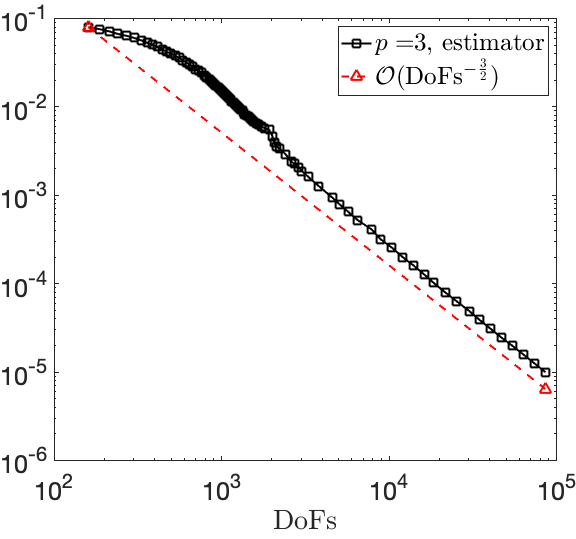} &
\includegraphics[width=0.46\linewidth]{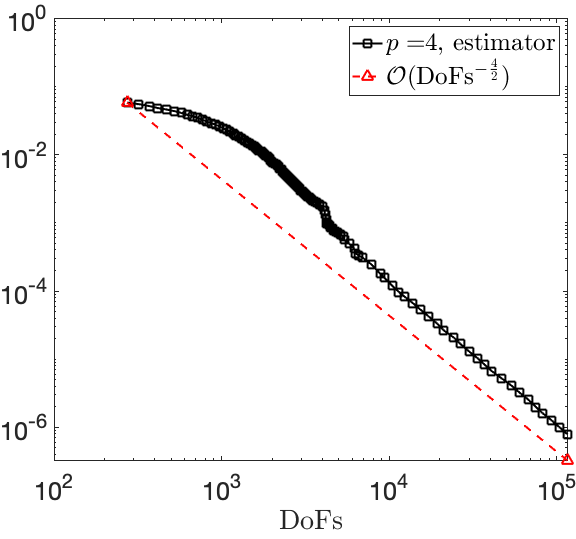}
\end{tabular}
\end{center}
\caption{Example 2. {\sl A posteriori} error estimator as a function of DoFs for polynomial degrees $p\in\{1,2,3,4\}$.}\label{Ex2:error_estimator_p=1,2,3,4}
\end{figure}

\begin{figure}[!tb]
\begin{center}
\begin{tabular}{cc}
\includegraphics[width=0.45\linewidth]{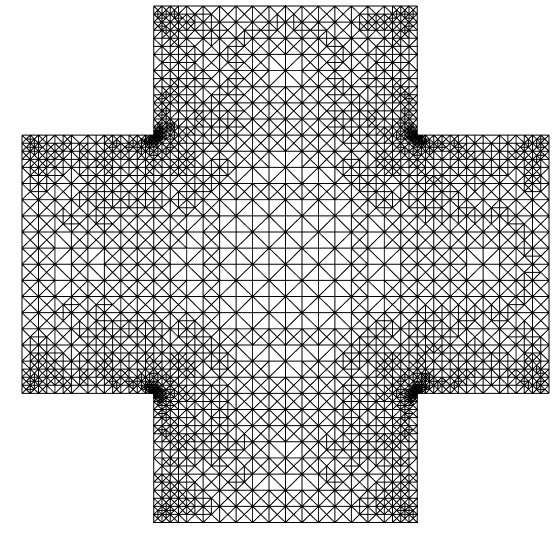}
&
\includegraphics[width=0.45\linewidth]{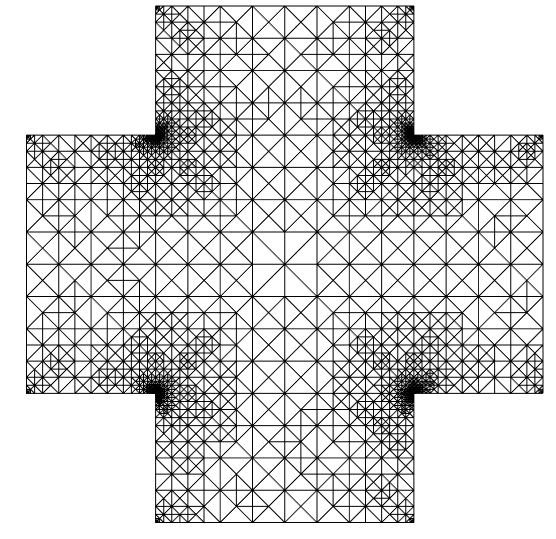}
\end{tabular}
\end{center}
\caption{Example 2. Adaptive mesh for polynomial degree $p=1$ with $5956$ elements (left) and for polynomial degree $p=4$ with $5720$ elements (right).}\label{Ex2:mesh p=1,2,3,4}
\end{figure}

\begin{figure}[!tb]
\begin{center}
\begin{tabular}{cc}
\includegraphics[width=0.45\linewidth]{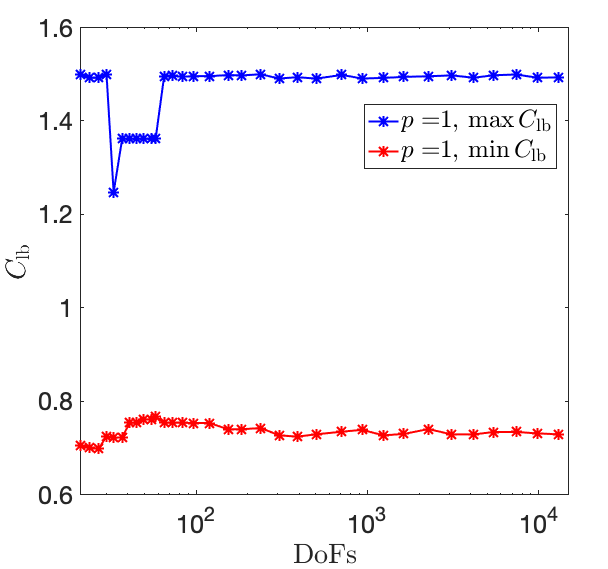}
&
\includegraphics[width=0.45\linewidth]{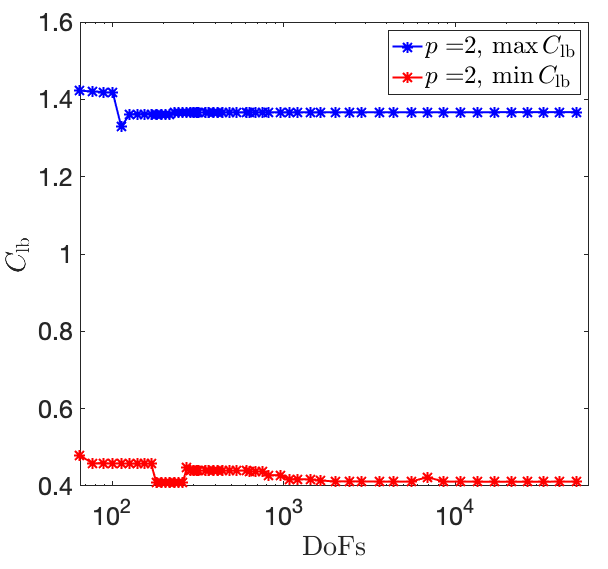} \\
\includegraphics[width=0.45\linewidth]{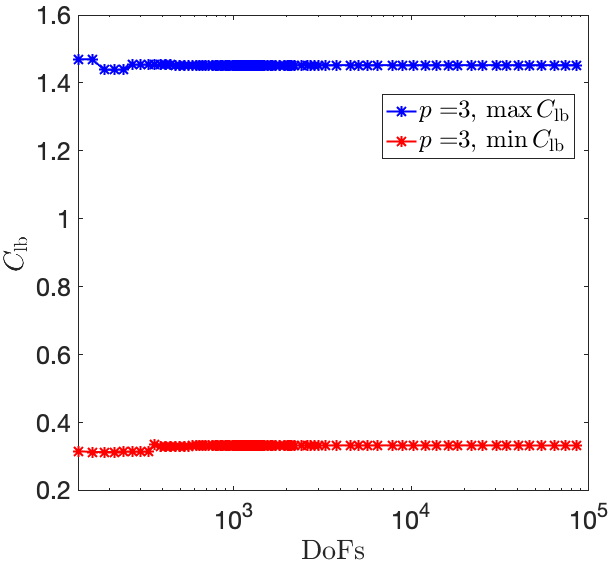}
&
\includegraphics[width=0.45\linewidth]{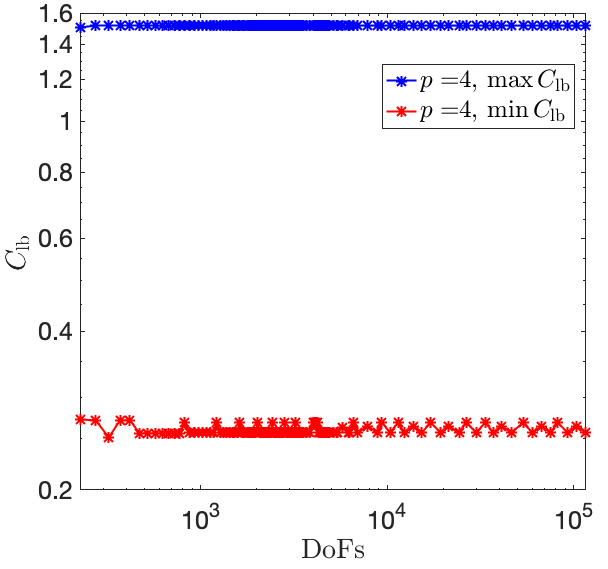}
\end{tabular}
\end{center}
\caption{Example 2. Maximal and minimal values over all marked vertices $\bs{a}\in\MM_\ell$ of the local stability constants $\const{lb}(\bs{a})$ from~\eqref{eq:Clb_1} for polynomial degrees $p\in \{1,2,3,4\}$.}\label{Ex2:C_{lb}=3,4}
\end{figure}

Finally, we numerically test the value of the effectivity indices of the estimated error reduction factor $q_{\rm ctr}$ defined in~\eqref{eq:error_contraction}. 
Since the exact solution and thus the error are unknown, we compute the ratio $q_{\rm ctr} / \big(\eta_{\ell+1}(\TT_{\ell+1}) / \eta_\ell(\TT_\ell)\big)$ instead. In Figure~\ref{Ex2: effectivity for error reduction, p=1,2,3,4}, we display these ratios for all $p\in \{1,\dots,4\}$.
They lie between $1$ and $1.4$.

\begin{figure}[!tb]
\begin{center}
\includegraphics[width=0.45\linewidth]{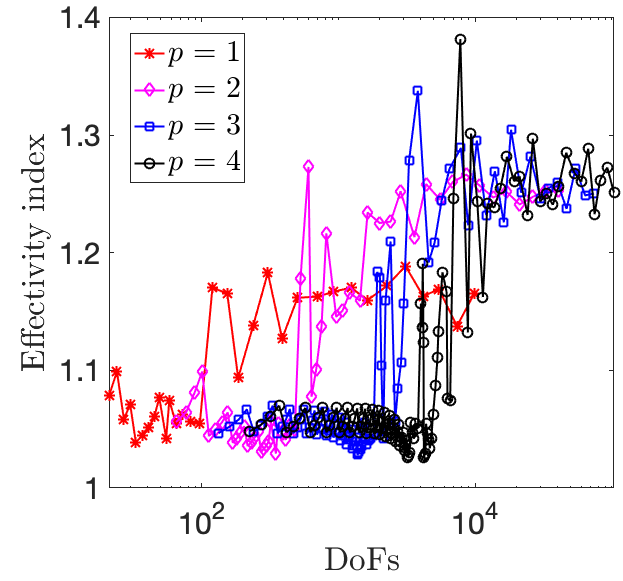}
\end{center}
\caption{Example 2. Effectivity index of error reduction factor $q_{\rm ctr}$ as a function of DoFs for polynomial degrees $p\in\{1,2,3,4\}$.}\label{Ex2: effectivity for error reduction, p=1,2,3,4}
\end{figure}

\appendix

\section{Local equivalences}\label{sec:equivalence}

We show in this appendix a local equivalence of the vertex- and element-based version of the equilibrated-flux estimator, as well as their local equivalence to weighted residual estimators.

\subsection{Local equivalences of equilibrated-flux estimators}\label{sec:equivalence_flux}

Estimates~\eqref{eq:triangle}--\eqref{eq:oscillations} from Section~\ref{sec:reliability} yield that
\begin{align}
	\eta_\ell(T) \le (d+1)^{1/2} \eta_\ell(\VV_T) \quad\text{for all } T\in\TT_\ell.
\end{align}
More precisely, we have that
\begin{align}\label{eq:el2ver}
	\eta_\ell(\mathcal{S}_\ell) \le (d+1)^{1/2} \eta_\ell\big(\set{\bs{a}\in\VV_\ell}{\bs{a}\in\VV_T, \, T \in \mathcal{S}_\ell}\big) \quad\text{for all } \mathcal{S}_\ell\subseteq\TT_\ell.
\end{align}

Also a converse estimate holds true, i.e.,
\begin{align}\label{eq:ver2el}
	\eta_\ell(\bs{a}) \lesssim \eta_\ell(\TT_\ell(\bs{a}))
	\quad \text{for all }\bs{a}\in\VV_\ell.
\end{align}
This follows from the crucial estimate
$\norm{\psi_\ell^{\bs{a}}\nabla u_\ell + \bs{\sigma}_\ell^{\bs{a}}}{\oma} \le \const{st} \norm{\nabla\rho^{\bs{a}}}{\oma}$ from~\eqref{eq:est2res}:
For all $v\in H_*^1(\oma)$, $\nabla{\cdot}\bs{\sigma}_\ell = \Pi_\ell^{p} f$ shows that
\begin{align}
	&\dual{\nabla \rho^{\bs{a}}}{\nabla v}_{\oma}
	= \dual{\Pi_\ell^{p}(\psi_\ell^{\bs{a}} f) - \nabla \psi_\ell^{\bs{a}} \cdot \nabla u_\ell}{v}_{\oma} - \dual{\psi_\ell^{\bs{a}} \nabla u_\ell}{\nabla v}_{\oma}
	\nonumber \\ &\quad
	= \dual{\psi_\ell^{\bs{a}} \nabla{\cdot} \bs{\sigma}_\ell - \nabla \psi_\ell^{\bs{a}} \cdot \nabla u_\ell}{v}_{\oma} - \dual{\psi_\ell^{\bs{a}} \nabla u_\ell}{\nabla v}_{\oma}
		+\dual{\Pi_\ell^{p}(\psi_\ell^{\bs{a}} f) - \psi_\ell^{\bs{a}} \Pi_\ell^{p} f}{v}_{\oma}
	\label{eq_eq}\\ &\quad
	= -\dual{\bs{\sigma}_\ell + \nabla u_\ell}{\nabla(\psi_\ell^{\bs{a}} v)}_{\oma} +\dual{\Pi_\ell^{p}(\psi_\ell^{\bs{a}} f) - \psi_\ell^{\bs{a}} \Pi_\ell^{p} f}{v}_{\oma}. \nonumber
\end{align}
If $f\in\mathcal{P}^{p-1}(\TT_\ell)$, the last term and the oscillation term in $\eta_\ell(\bs{a})$ vanish,
the desired inequality~\eqref{eq:ver2el} follows with a multiplicative constant depending only on the dimension $d$ and shape regularity of $\TT_\ell$, namely,
\begin{align}\label{eq:ver2el_2}
\begin{split}
	\eta_\ell(\bs{a}) & = \norm{\psi_\ell^{\bs{a}} \nabla u_\ell + \bs{\sigma}_\ell^{\bs{a}}}{\oma}
	\stackrel{\eqref{eq:est2res}}\le \const{st} \norm{\nabla\rho^{\bs{a}}}{\oma}
	\stackrel{\eqref{eq_eq}+\eqref{eq:PFS}} \le \const{st}\const{cont,PF} \norm{\nabla u_\ell + \bs{\sigma}_\ell}{\oma}
	\\
	&=  \const{st}\const{cont,PF}\eta_\ell(\TT_\ell(\bs{a})).
\end{split}
\end{align}
For the general case $f\not\in\mathcal{P}^{p-1}(\TT_\ell)$, one can employ the oscillation bound~\eqref{eq:Cosc}, albeit at the expense of a constant depending on the polynomial degree $p$.

\subsection{Local equivalence to weighted residual estimator}\label{sec:equi2res}

Existing optimality proofs of adaptive algorithms steered by equilibrated-flux estimators~\cite{ks11,cn12,Ber_Bof_Prag_Syng_a_post_20} crucially exploit local equivalence to the weighted residual estimator $\zeta_\ell(\TT_\ell)$, where
\begin{align}\label{eq:weighted-residual}
	\zeta_\ell(T)^2 := |T|^{2/d} \norm{f + \Delta u_\ell}{T}^2 + |T|^{1/d} \norm{\jump{\partial_{\bs{n}} u_\ell} }{\partial T\cap\Omega}^2,
\end{align}
and, as above,
\begin{align*}
	\zeta_\ell(\mathcal{S}_\ell) := \Big(\sum_{T\in\mathcal{S}_\ell} \zeta_\ell(T)^2\Big)^{1/2}
	\quad \text{for all }\mathcal{S}_\ell\subseteq\TT_\ell.
\end{align*}
As usual, $\jump{\partial_{\bs{n}} u_\ell}$ denotes the jump of the normal component of the weak gradient of $u_\ell$ across interior faces.
Let $\TT_\ell^*(T):=\set{T'\in\TT_\ell}{T' \text{ shares a face with }T}$ and $\TT_\ell(T):=\set{T'\in\TT_\ell}{T' \text{ shares a vertex with }T}$.
Then it holds that
\begin{align}\label{eq:equivalence_weighted}
	\zeta_\ell(T) \lesssim \eta_\ell(\TT_\ell^*(T))
	\quad\text{and}\quad
	\eta_\ell(T) \lesssim \zeta_\ell(\TT_\ell(T))
	\quad \text{for all }T\in\TT_\ell,
\end{align}
see~\cite{Ched_Fuc_Priet_Voh_guar_rob_FE_RD_09,ks11,cn12,Ern_Voh_adpt_IN_13,Ber_Bof_Prag_Syng_a_post_20}. Indeed, the first inequality of~\eqref{eq:equivalence_weighted} follows readily from the identities for the source $f = \nabla \cdot\bs{\sigma}_\ell + (1-\Pi_\ell^{p}) f$
and the normal jump $\jump{\bs{\sigma}_\ell\cdot \bs{n}}|_{\partial T\cap\Omega} = 0$ (due to $\bs{\sigma}_\ell\in \bs{H}(\div;\Omega)$) in combination with inverse and trace inequalities.
The involved constant thus depends on the dimension $d$, the shape regularity of $\TT_\ell$, and the polynomial degree $p$.
The second inequality in the above references follows from equivalence of norms on finite-dimensional spaces, which again implies a $p$-dependent constant.
We now establish it with a constant that depends only on the dimension $d$ and the shape regularity of $\TT_\ell$.

We exploit~\eqref{eq:est2res}, i.e.,
\begin{align*}
	\norm{\nabla u_\ell + \bs{\sigma}_\ell}{T}
	\le \sum_{\bs{a}\in \VV_T} \norm{\psi_\ell^{\bs{a}}\nabla u_\ell + \bs{\sigma}_\ell^{\bs{a}}}{T}
	\le \const{st} \sum_{\bs{a}\in\VV_T} \norm{\nabla\rho^{\bs{a}}}{\oma}.
\end{align*}
For $v \in H_*^1(\oma)$, integration by parts gives that
\begin{align*}
	\dual{\nabla \rho^{\bs{a}}}{\nabla v}_{\oma}
	= {} & \dual{(\Pi_\ell^{p}-1)(\psi_\ell^{\bs{a}} f)}{v}_{\oma}
		+ \sum_{T\in\TT_\ell(\bs{a})} \dual{f + \Delta u_\ell}{\psi_\ell^{\bs{a}} v}_{T} \\
		{} & - \frac{1}{2}\dual{\jump{\partial_{\bs{n}} u_\ell}}{\psi_\ell^{\bs{a}} v}_{\partial T \setminus \partial\oma}.
\end{align*}
The Poincar\'e--Friedrichs inequality on vertex patches~\eqref{eq:PF} and the trace inequality imply that
\begin{align*}
	\norm{\nabla \rho^{\bs{a}}}{\oma}
	\lesssim \zeta_\ell(\TT_\ell(\bs{a})) + \diam(\oma) \norm{\psi_\ell^{\bs{a}} f - \Pi_\ell^{p}(\psi_\ell^{\bs{a}} f)}{\oma}.
\end{align*}
For $T\in\TT_\ell^{\bs{a}}$, it holds that
\begin{align*}
	\norm{\psi_\ell^{\bs{a}} f - \Pi_\ell^{p}(\psi_\ell^{\bs{a}} f)}{T}
	= \norm{(1-\Pi_\ell^{p}) [\psi_\ell^{\bs{a}} (f + \Delta u_\ell)])}{T}
	\le \norm{f + \Delta u_\ell}{T}.
\end{align*}
The oscillation term $\norm{f-\Pi_\ell^{p} f}{T}$ in $\eta_\ell(T)$ can be treated as in the last inequality.
Overall, this gives the desired second inequality of~\eqref{eq:equivalence_weighted}.

\begin{remark}[oscillation bound] \label{rem:Cosc}
The arguments of this section also readily show the oscillation bound~\eqref{eq:Cosc}.
\end{remark}

\section{Element-based adaptive algorithm \& optimal convergence}\label{sec:algorithm_element}

We finally consider the following counterpart of Algorithm~\ref{alg:afem}, where standard element-based D\"orfler marking is used and no additional newest-vertex bisections are applied.

\begin{algorithm}[element-based, standard]\label{alg:afem_element}
\textbf{Input:} Triangulation $\TT_0$, D\"orfler marking parameter $0 < \theta \le 1$.
\\
\textbf{Loop:} For each $\ell=0,1,2,\dots$, iterate the following steps {\rm (i)--(iv)}:
\begin{enumerate}[\rm(i)]
\item Compute the Galerkin approximation $u_\ell\in\mathbb{V}_\ell$.
\item Compute the error indicators $\eta_\ell(T)$ from~\eqref{eq:reliability_one} for all elements $T\in\TT_\ell$.
\item Determine a minimal set of marked elements $\MM_\ell\subseteq\TT_\ell$ satisfying the D\"orfler marking
\begin{align}\label{eq:doerfler_element}
	\theta\,\eta_\ell(\TT_\ell) \le \eta_\ell(\MM_\ell).
\end{align}
\item Generate refined triangulation $\TT_{\ell+1}:=\refine(\TT_\ell,\MM_\ell)$, employing one newest-vertex bisection to all $T\in\MM_\ell$ and a minimal number of additional newest-vertex bisections to ensure conformity of the resulting triangulation $\TT_{\ell+1}$.
\end{enumerate}
\textbf{Output:} Nested sequence of triangulations $\TT_\ell$, corresponding Galerkin approximations $u_\ell \in \mathbb{V}_\ell$, and
equilibrated-flux estimators $\eta_\ell(\TT_\ell)$ for all $\ell \in \N_0$.
\end{algorithm}

Building on the ($p$-dependent) local estimator equivalence of Section~\ref{sec:equi2res}, linear convergence~\eqref{eq:linear_convergence} at optimal rate~\eqref{eq:optimal_convergence} of this element-based algorithm is well-known~\cite{ks11,cn12,Ber_Bof_Prag_Syng_a_post_20};
see also the general framework~\cite[Section~8]{cfpp14}.
While the standard argument particularly leads to a $p$-dependent optimal D\"orfler marking parameter $\tilde\theta_{\rm opt}$, our novel Lemma~\ref{lem:optimality_doerfler} allows for a $p$-robust $\tilde\theta_{\rm opt}$ --- at least if $f\in\mathcal{P}^{p-1}(\TT_0)$. In contrast to Theorem~\ref{thm:optimal_convergence}, the constant $\const{opt}$ here always depends on the polynomial degree $p$.

\begin{theorem}[optimal convergence]
Let $\tilde\theta_{\rm opt}$ be the optimal D\"orfler marking parameter of Lemma~\ref{lem:optimality_doerfler}, which is $p$-robust if $f\in\mathcal{P}^{p-1}(\TT_0)$.
Then, for all $0<\theta<\tilde\theta_{\rm opt}$ and all $s>0$, there exist constants $c_{\rm opt},\const{opt}>0$ such that the meshes $(\TT_\ell)_{\ell\in\N_0}$ generated by Algorithm~\ref{alg:afem_element} satisfy that
\begin{align}\label{eq:optimal_convergence2}
	c_{\rm opt} \norm{u}{\A_s^{\rm tot}}
	\le\sup_{\ell\in\N_0}\big[[p^{d}(\#\TT_\ell-\#\TT_0+1)]^{s} \big(\norm{\nabla(u - u_\ell)}{\Omega} + \osc_\ell(\TT_\ell)\big)\big]
	\le \const{opt} \norm{u}{\A_s^{\rm tot}},
\end{align}
where $\norm{u}{\A_s^{\rm tot}}$ is defined as in~\eqref{eq:approximation} for the total error $\norm{\nabla(u - u_\star)}{\Omega}+\osc_\star(\TT_\star)$.
The constant $\const{opt}$ depends  only on the dimension $d$, shape regularity of $\TT_0$, the constant $\const{clos}$ from~\eqref{eq:closure}, the polynomial degree $p$, the D\"orfler marking parameter $\theta$, and on $s$,
while $c_{\rm opt}$ depends only on $ \#\TT_0$, the constant $\const{child}$ from~\eqref{eq:child},  $s$, and, if there exists $\ell_0$ with $\eta_{\ell_0}=0$, also on $\ell_0$.
\end{theorem}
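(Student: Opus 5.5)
The proof will follow the standard axioms-of-adaptivity route of \cite[Section~8]{cfpp14}, with the total error $\Delta_\star := \norm{\nabla(u-u_\star)}{\Omega}+\osc_\star(\TT_\star)$ replaced by the estimator. By~\eqref{eq:releff}, $\Delta_\star$ and $\eta_\star(\TT_\star)$ are equivalent up to $p$-dependent constants. It therefore suffices to prove~\eqref{eq:optimal_convergence2} with $\eta_\ell(\TT_\ell)$ in place of the total error, and with $\norm{\eta}{\A_s}$ in place of $\norm{u}{\A_s^{\rm tot}}$. Afterwards, I will use that $\norm{\eta}{\A_s}$ and $\norm{u}{\A_s^{\rm tot}}$ are equivalent; this is the analogue of~\eqref{eq:equivalent_classes}, obtained via \cite[Proposition~4.6]{cfpp14}.

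The lower bound follows exactly as in Theorem~\ref{thm:optimal_convergence}. Since one bisection per marked element keeps the child estimate~\eqref{eq:child}, the argument of \cite[Proposition~4.15]{cfpp14} applies. The case $\eta_{\ell_0}=0$ only affects the constant through $\ell_0$.

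For the upper bound, I first establish linear convergence~\eqref{eq:linear_convergence} for Algorithm~\ref{alg:afem_element}, with $p$-dependent $\const{lin}$ and $q_{\rm lin}$. I plan to transfer the classical estimator-reduction and quasi-orthogonality argument of \cite{ckns08,cfpp14} through the local equivalence~\eqref{eq:equivalence_weighted} to the weighted residual estimator $\zeta_\ell$. One point needs care: the equilibrated estimator is not itself reduced on refined elements. I will therefore argue as in \cite{ks11,cn12}. Dörfler marking for $\eta_\ell$ on $\MM_\ell$ implies Dörfler marking for $\zeta_\ell$ on the patch $\TT_\ell(\MM_\ell)$. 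Then I will either show that all of these patch elements are refined, or use the refinement of $\MM_\ell$ together with the local equivalence to obtain a contraction of the quasi-error $\norm{\nabla(u-u_\ell)}{\Omega}^2+\gamma\zeta_\ell(\TT_\ell)^2$. This is the step I expect to be the main obstacle, since the marked set for $\zeta$ is larger than the refined set. The fallback is to cite the established result \cite{ks11,cn12,Ber_Bof_Prag_Syng_a_post_20} directly, since the statement itself says that linear convergence is well known here.

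Next comes optimality of marking. Fix $j<\ell$, and apply the comparison lemma (Lemma~\ref{lem:comparison}) to $\TT_j$, for the class $\norm{\eta}{\A_s}$, with $\epsilon=q_\theta\eta_j(\TT_j)$. This yields a refinement $\widehat\TT_j$ with $\widehat\eta_j(\widehat\TT_j)\le q_\theta\eta_j(\TT_j)$ and $p^d(\#\widehat\TT_j-\#\TT_j)\lesssim\norm{\eta}{\A_s}^{1/s}\epsilon^{-1/s}$. For the estimator class, the comparison lemma needs quasi-monotonicity of $\eta$, which again follows from the local equivalence. Since $\theta<\tilde\theta_{\rm opt}$, Lemma~\ref{lem:optimality_doerfler} shows that $\TT_j^2(\TT_j\setminus\widehat\TT_j)$ satisfies element Dörfler marking. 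Minimality of $\MM_j$ then gives $\#\MM_j\le\const{nei}^2(\#\widehat\TT_j-\#\TT_j)$.

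Finally, I combine the mesh closure estimate~\eqref{eq:closure} with a geometric-series bound on $\sum_{j<\ell}\eta_j(\TT_j)^{-1/s}$, which follows from linear convergence via \cite[Lemma~4.9]{cfpp14}. This gives $[p^d(\#\TT_\ell-\#\TT_0+1)]^s\eta_\ell(\TT_\ell)\lesssim\norm{\eta}{\A_s}$. Converting back to the total error by~\eqref{eq:releff} and~\eqref{eq:equivalent_classes} yields~\eqref{eq:optimal_convergence2}, with constants depending on $p$, $\theta$, $s$, $\const{clos}$ and shape regularity, as claimed.
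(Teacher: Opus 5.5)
Your proposal follows the paper's proof. The paper also does not reprove linear convergence~\eqref{eq:linear_convergence}: it takes it from \cite[Section~8]{cfpp14}, \cite{ks11,cn12,Ber_Bof_Prag_Syng_a_post_20} via the local equivalence to $\zeta_\ell$, and then combines Lemma~\ref{lem:optimality_doerfler}, the comparison lemma for the estimator, mesh closure, the geometric-series bound and~\eqref{eq:releff} exactly as you outline. Your first option (that all of $\TT_\ell(\MM_\ell)$ gets refined) is false for Algorithm~\ref{alg:afem_element}, which bisects only $\MM_\ell$ plus the closure, so your fallback of citing the established result is the route actually taken.
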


\begin{proof}
As in Remark~\ref{rem:standard_proof}, the local estimator equivalence of Section~\ref{sec:equi2res} implies linear convergence~\eqref{eq:linear_convergence} of the equilibrated-flux estimator $\eta_\ell(\TT_\ell)$;
see again \cite[Section~8]{cfpp14} or \cite{ks11,cn12,Ber_Bof_Prag_Syng_a_post_20} for the standard argument.
Then, the statement follows from optimality of D\"orfler marking (Lemma~\ref{lem:optimality_doerfler}), the comparison lemma (Lemma~\ref{lem:comparison}) for the estimator $\eta_\star(\TT_\star)$ instead of the error $\norm{\nabla (u - u_\star)}{\Omega}$, and the equivalence of the estimator $\eta_\star(\TT_\star)$ to the total error $\norm{\nabla(u - u_\star)}{\Omega}+\osc_\star(\TT_\star)$ from~\eqref{eq:releff}; see, e.g., \cite[Proposition~4.15]{cfpp14} or the proof of Theorem~\ref{thm:optimal_convergence}.
\end{proof}

\bibliographystyle{alpha}
\bibliography{literature}

@article{bcnv24,
	author = {Bonito, Andrea and Canuto, Claudio and Nochetto, Ricardo H. and Veeser, Andreas},
	doi = {10.1017/S0962492924000011},
	fjournal = {Acta Numerica},
	issn = {0962-4929},
	journal = {Acta Numer.},
	mrclass = {65N30 (65N35 65N50)},
	mrnumber = {4793681},
	pages = {163--485},
	title = {Adaptive finite element methods},
	url = {https://doi.org/10.1017/S0962492924000011},
	volume = {33},
	year = {2024}}

@article{DolejsiErnVohralik16,
	author = {Dolej\v{s}\'{i}, V\'{i}t and Ern, Alexandre and Vohral\'{i}k, Martin},
	doi = {10.1137/15M1026687},
	fjournal = {SIAM Journal on Scientific Computing},
	issn = {1064-8275,1095-7197},
	journal = {SIAM J. Sci. Comput.},
	mrclass = {65N30 (65N15 65N50)},
	mrnumber = {3556071},
	mrreviewer = {Rommel\ Bustinza},
	number = {5},
	pages = {A3220--A3246},
	title = {{$hp$}-adaptation driven by polynomial-degree-robust a posteriori error estimates for elliptic problems},
	url = {https://doi.org/10.1137/15M1026687},
	volume = {38},
	year = {2016}}

@article{DemkowiczBuffa05,
	author = {Demkowicz, Leszek and Buffa, Annalisa},
	doi = {10.1016/j.cma.2004.07.007},
	fjournal = {Computer Methods in Applied Mechanics and Engineering},
	issn = {0045-7825,1879-2138},
	journal = {Comput. Methods Appl. Mech. Engrg.},
	mrclass = {65N30 (78M10)},
	mrnumber = {2105164},
	mrreviewer = {JiChun\ Li},
	number = {2-5},
	pages = {267--296},
	title = {{$H^1$}, {$H({\rm curl})$} and {$H({\rm div})$}-conforming projection-based interpolation in three dimensions. {Q}uasi-optimal {$p$}-interpolation estimates},
	url = {https://doi.org/10.1016/j.cma.2004.07.007},
	volume = {194},
	year = {2005}}

@article{bdfgps26,
	author = {Philipp Bringmann and Aleksandar Dadic and Dario Ferloni and Gregor Gantner and Dirk Praetorius and Julian Streitberger},
	journal = {arXiv preprint},
	title = {{Optimal complexity of adaptive FEM for second-order linear elliptic PDEs driven by non-residual estimators, Part I: Symmetric PDEs}},
	volume = {2607.14820},
	year = {2026}}

@article{dk21,
	author = {Diening, Lars and Kreuzer, Christian},
	doi = {10.1515/cmam-2020-0041},
	fjournal = {Computational Methods in Applied Mathematics},
	issn = {1609-4840},
	journal = {Comput. Methods Appl. Math.},
	mrclass = {65N30 (65N15 65N50)},
	mrnumber = {4279090},
	number = {3},
	pages = {557--567},
	title = {On the threshold condition for {D}\"{o}rfler marking},
	url = {https://doi.org/10.1515/cmam-2020-0041},
	volume = {21},
	year = {2021}}

@article{gm09,
	author = {Gaspoz, Fernando D. and Morin, Pedro},
	doi = {10.1093/imanum/drn039},
	fjournal = {IMA Journal of Numerical Analysis},
	issn = {0272-4979},
	journal = {IMA J. Numer. Anal.},
	mrclass = {65N30 (65N12 65N50)},
	mrnumber = {2557050},
	mrreviewer = {Christos A. Xenophontos},
	number = {4},
	pages = {917--936},
	title = {Convergence rates for adaptive finite elements},
	url = {https://doi.org/10.1093/imanum/drn039},
	volume = {29},
	year = {2009}}

@article{gss14,
	author = {Gallistl, Dietmar and Schedensack, Mira and Stevenson, Rob P.},
	journal = {Comput. Methods Appl. Math.},
	number = {3},
	pages = {317--320},
	publisher = {De Gruyter},
	title = {A remark on newest vertex bisection in any space dimension},
	volume = {14},
	year = {2014}}

@article{dgs25,
	author = {Diening, Lars and Gehring, Lukas and Storn, Johannes},
	journal = {Found. Comput. Math.},
	note = {DOI~10.1007/s10208-025-09698-7},
	publisher = {Springer},
	title = {Adaptive mesh refinement for arbitrary initial triangulations},
	year = {2025}}

@article{cv26,
	author = {Chaumont-Frelet, Th{\'e}ophile and Vohral{\'{\i}}k, Martin},
	doi = {10.1007/s10208-024-09674-7},
	fjournal = {Foundations of Computational Mathematics. The Journal of the Society for the Foundations of Computational Mathematics},
	journal = {Found. Comput. Math.},
	number = {1},
	pages = {483--524},
	title = {Constrained and unconstrained stable discrete minimizations for $p$-robust local reconstructions in vertex patches in the de {R}ham complex},
	url = {https://doi.org/10.1007/s10208-024-09674-7},
	volume = {26},
	year = {2026}}

@article{cf00,
	author = {Carstensen, Carsten and Funken, Stefan A.},
	journal = {East-West J. Numer. Math.},
	number = {3},
	pages = {153--175},
	title = {{Constants in Cl{\'e}ment-interpolation error and residual based a posteriori estimates in finite element methods}},
	volume = {8},
	year = {2000}}

@article{vohralik26,
	author = {Vohral{\'{\i}}k, Martin},
	fjournal = {SIAM Journal on Numerical Analysis},
	journal = {SIAM J. Numer. Anal.},
	note = {Accepted for publication},
	title = {$p$-robust equivalence of global continuous and local discontinuous approximation, a $p$-stable local projector, and optimal elementwise $hp$ approximation estimates in {$H^1$}},
	url = {https://hal.inria.fr/hal-04436063},
	year = {2026}}

@article{pp20,
	author = {Pfeiler, Carl-Martin and Praetorius, Dirk},
	doi = {10.1090/mcom/3553},
	journal = {Math. Comp.},
	number = {326},
	pages = {2735--2752},
	title = {D{\"o}rfler marking with minimal cardinality is a linear complexity problem},
	volume = {89},
	year = {2020}}

@article{cnsv17b,
	author = {Canuto, Claudio and Nochetto, Ricardo H. and Stevenson, Rob and Verani, Marco},
	journal = {Comput. Math. Appl.},
	number = {9},
	pages = {2004--2022},
	publisher = {Elsevier},
	title = {On $p$-robust saturation for $hp$-{AFEM}},
	volume = {73},
	year = {2017}}

@article{cnsv17a,
	author = {Canuto, Claudio and Nochetto, Ricardo H. and Stevenson, Rob and Verani, Marco},
	journal = {Numer. Math.},
	number = {4},
	pages = {1073--1119},
	publisher = {Springer},
	title = {Convergence and optimality of $hp$-{AFEM}},
	volume = {135},
	year = {2017}}

@article{dk08,
	author = {Diening, Lars and Kreuzer, Christian},
	journal = {SIAM J. Numer. Anal.},
	number = {2},
	pages = {614--638},
	publisher = {SIAM},
	title = {Linear convergence of an adaptive finite element method for the $p$-{L}aplacian equation},
	volume = {46},
	year = {2008}}

@article{dev20,
	author = {Daniel, Patrik and Ern, Alexandre and Vohral{\'\i}k, Martin},
	doi = {10.1016/j.cma.2019.112607},
	journal = {Comput. Methods Appl. Mech. Engrg.},
	pages = {112607},
	publisher = {Elsevier},
	title = {An adaptive $hp$-refinement strategy with inexact solvers and computable guaranteed bound on the error reduction factor},
	volume = {359},
	year = {2020}}

@article{desv18,
	author = {Daniel, Patrik and Ern, Alexandre and Smears, Iain and Vohral{\'\i}k, Martin},
	doi = {10.1016/j.cma.2019.112607},
	journal = {Comput. Math. Appl.},
	number = {5},
	pages = {967--983},
	publisher = {Elsevier},
	title = {An adaptive $hp$-refinement strategy with computable guaranteed bound on the error reduction factor},
	volume = {76},
	year = {2018}}

@article{ev15,
	author = {Ern, Alexandre and Vohral{\'{\i}}k, Martin},
	doi = {10.1137/130950100},
	fjournal = {SIAM Journal on Numerical Analysis},
	issn = {0036-1429},
	journal = {SIAM J. Numer. Anal.},
	mrclass = {65N30 (65N15)},
	mrnumber = {3335498},
	number = {2},
	pages = {1058--1081},
	title = {Polynomial-degree-robust a posteriori estimates in a unified setting for conforming, nonconforming, discontinuous {G}alerkin, and mixed discretizations},
	url = {http://dx.doi.org/10.1137/130950100},
	volume = {53},
	year = {2015}}

@article{bps09,
	author = {Braess, Dietrich and Pillwein, Veronika and Sch{\"o}berl, Joachim},
	coden = {CMMECC},
	doi = {10.1016/j.cma.2008.12.010},
	fjournal = {Computer Methods in Applied Mechanics and Engineering},
	issn = {0045-7825},
	journal = {Comput. Methods Appl. Mech. Engrg.},
	mrclass = {65N15 (65N30)},
	mrnumber = {2500243},
	mrreviewer = {Mohammad Asadzadeh},
	number = {13-14},
	pages = {1189--1197},
	title = {Equilibrated residual error estimates are {$p$}-robust},
	url = {http://dx.doi.org/10.1016/j.cma.2008.12.010},
	volume = {198},
	year = {2009}}

@article{ps47,
	author = {Prager, William and Synge, John L.},
	doi = {10.1090/qam/25902},
	fjournal = {Quarterly of Applied Mathematics},
	issn = {0033-569X},
	journal = {Quart. Appl. Math.},
	mrclass = {73.2X},
	mrnumber = {MR0025902 (10,81b)},
	mrreviewer = {E. Reissner},
	pages = {241--269},
	title = {Approximations in elasticity based on the concept of function space},
	volume = {5},
	year = {1947}}

@book{repin08,
	address = {Berlin},
	author = {Repin, Sergey},
	doi = {10.1515/9783110203042},
	publisher = {De Gruyter},
	title = {A posteriori estimates for partial differential equations},
	year = {2008}}

@article{egp20,
	author = {Erath, Christoph and Gantner, Gregor and Praetorius, Dirk},
	doi = {10.1016/j.camwa.2019.07.014},
	journal = {Comput. Math. Appl.},
	number = {3},
	pages = {623--642},
	title = {Optimal convergence behavior of adaptive {FEM} driven by simple $(h- h/ 2)$-type error estimators},
	volume = {79},
	year = {2020}}

@article{dv23,
	author = {Patrik Daniel and Martin Vohral\'ik},
	doi = {10.1051/m2an/2022082},
	journal = {ESAIM Math. Model. Numer. Anal.},
	number = {1},
	pages = {329--366},
	publisher = {{EDP} Sciences},
	title = {Guaranteed contraction of adaptive inexact $hp$-refinement strategies with realistic stopping criteria},
	url = {https://doi.org/10.1051%2Fm2an%2F2022082},
	volume = {57},
	year = {2023}}

@article{traxler97,
	author = {Traxler, Christoph T.},
	journal = {Computing},
	number = {2},
	pages = {115--137},
	publisher = {Springer},
	title = {An algorithm for adaptive mesh refinement in $n$ dimensions},
	volume = {59},
	year = {1997}}

@article{maubach95,
	author = {Maubach, Joseph M.},
	journal = {{SIAM J. Sci. Comput.}},
	number = {1},
	pages = {210--227},
	publisher = {SIAM},
	title = {Local bisection refinement for $n$-simplicial grids generated by reflection},
	volume = {16},
	year = {1995}}

@article{ks11,
	author = {Kreuzer, Christian and Siebert, Kunibert G.},
	doi = {10.1007/s00211-010-0324-5},
	journal = {Numer. Math.},
	number = {4},
	pages = {679--716},
	publisher = {Springer},
	title = {{Decay rates of adaptive finite elements with D{\"o}rfler marking}},
	volume = {117},
	year = {2011}}

@article{cn12,
	author = {Casc{\'o}n, J. Manuel and Nochetto, Ricardo H.},
	doi = {10.1093/imanum/drr014},
	journal = {IMA J. Numer. Anal.},
	number = {1},
	pages = {1--29},
	publisher = {Oxford University Press},
	title = {Quasioptimal cardinality of {AFEM} driven by nonresidual estimators},
	volume = {32},
	year = {2012}}

@article{ev20,
	author = {Ern, Alexandre and Vohral{\'\i}k, Martin},
	doi = {10.1090/mcom/3482},
	journal = {Math. Comp.},
	number = {322},
	pages = {551--594},
	title = {Stable broken {$H^1$} and $\boldsymbol{H}(\mathrm{div})$ polynomial extensions for polynomial-degree-robust potential and flux reconstruction in three space dimensions},
	volume = {89},
	year = {2020}}

@article{bs08,
	author = {Braess, Dietrich and Sch{\"o}berl, Joachim},
	doi = {10.1090/S0025-5718-07-02080-7},
	journal = {Math. Comp.},
	number = {262},
	pages = {651--672},
	title = {Equilibrated residual error estimator for edge elements},
	volume = {77},
	year = {2008}}

@article{ckns08,
	author = {Casc\'{o}n, J. Manuel and Kreuzer, Christian and Nochetto, Ricardo H. and Siebert, Kunibert G.},
	doi = {10.1137/07069047X},
	journal = {SIAM J. Numer. Anal.},
	number = {5},
	pages = {2524--2550},
	publisher = {SIAM},
	title = {Quasi-optimal convergence rate for an adaptive finite element method},
	volume = {46},
	year = {2008}}

@article{stevenson07,
	author = {Stevenson, Rob},
	doi = {10.1007/s10208-005-0183-0},
	fjournal = {Foundations of Computational Mathematics. The Journal of the Society for the Foundations of Computational Mathematics},
	issn = {1615-3375},
	journal = {Found. Comput. Math.},
	mrclass = {65N30},
	mrnumber = {2324418},
	mrreviewer = {Erwin Stein},
	number = {2},
	pages = {245--269},
	title = {Optimality of a standard adaptive finite element method},
	volume = {7},
	year = {2007}}

@article{cfpp14,
	author = {Carstensen, Carsten and Feischl, Michael and Page, Marcus and Praetorius, Dirk},
	doi = {10.1016/j.camwa.2013.12.003},
	journal = {Comput. Math. Appl.},
	number = {6},
	pages = {1195--1253},
	publisher = {Elsevier},
	title = {Axioms of adaptivity},
	volume = {67},
	year = {2014}}

@article{bdd04,
	author = {Binev, Peter and Dahmen, Wolfgang and DeVore, Ron},
	doi = {10.1007/s00211-003-0492-7},
	journal = {Numer. Math.},
	number = {2},
	pages = {219--268},
	publisher = {Springer},
	title = {Adaptive finite element methods with convergence rates},
	volume = {97},
	year = {2004}}

@book{ao00,
	address = {New York},
	author = {Ainsworth, Mark and Oden, J. Tinsley},
	doi = {10.1002/9781118032824},
	publisher = {John Wiley \& Sons},
	title = {A posteriori error estimation in finite element analysis},
	year = {2000}}

@article{stevenson08,
	author = {Stevenson, Rob},
	doi = {10.1090/S0025-5718-07-01959-X},
	journal = {Math. Comp.},
	number = {261},
	pages = {227--241},
	title = {The completion of locally refined simplicial partitions created by bisection},
	volume = {77},
	year = {2008}}

@book{verfuerth13,
	author = {Verf{\"u}rth, R{\"u}diger},
	doi = {10.1093/acprof:oso/9780199679423.001.0001},
	isbn = {978-0-19-967942-3},
	mrclass = {65N30 (35J25 35K20 35Q30 35Q74 65N15)},
	mrnumber = {3059294},
	mrreviewer = {Manfred Dobrowolski},
	pages = {xx+393},
	publisher = {Oxford University Press, Oxford},
	title = {A posteriori error estimation techniques for finite element methods},
	url = {http://dx.doi.org/10.1093/acprof:oso/9780199679423.001.0001},
	year = {2013}}

@article{mns00,
	author = {Morin, Pedro and Nochetto, Ricardo H. and Siebert, Kunibert G.},
	doi = {10.1137/S0036142999360044},
	fjournal = {SIAM Journal on Numerical Analysis},
	issn = {0036-1429},
	journal = {SIAM J. Numer. Anal.},
	mrclass = {65N30 (65N55 65Y20)},
	mrnumber = {1770058},
	mrreviewer = {Petr N. Vabishchevich},
	number = {2},
	pages = {466--488},
	title = {Data oscillation and convergence of adaptive {FEM}},
	url = {http://dx.doi.org/10.1137/S0036142999360044},
	volume = {38},
	year = {2000}}

@article{doerfler96,
	author = {D{\"o}rfler, Willy},
	coden = {SJNAAM},
	doi = {10.1137/0733054},
	fjournal = {SIAM Journal on Numerical Analysis},
	issn = {0036-1429},
	journal = {SIAM J. Numer. Anal.},
	mrclass = {65N50 (65N55)},
	mrnumber = {1393904},
	mrreviewer = {S. F. McCormick},
	number = {3},
	pages = {1106--1124},
	title = {A convergent adaptive algorithm for {P}oisson's equation},
	url = {http://dx.doi.org/10.1137/0733054},
	volume = {33},
	year = {1996}}

@article{kpp13,
	author = {Karkulik, Michael and Pavlicek, David and Praetorius, Dirk},
	doi = {10.1007/s00365-013-9192-4},
	fjournal = {Constructive Approximation. An International Journal for Approximations and Expansions},
	issn = {0176-4276},
	journal = {Constr. Approx.},
	mrclass = {65N50 (65N30 65Y20)},
	mrnumber = {3097045},
	number = {2},
	pages = {213--234},
	title = {On 2{D} newest vertex bisection: optimality of mesh-closure and {$H^1$}-stability of {$L_2$}-projection},
	url = {http://dx.doi.org/10.1007/s00365-013-9192-4},
	volume = {38},
	year = {2013}}

@article{Ern_Voh_adpt_IN_13,
	author = {Ern, Alexandre and Vohral{\'{\i}}k, Martin},
	coden = {SJOCE3},
	doi = {10.1137/120896918},
	fjournal = {SIAM Journal on Scientific Computing},
	issn = {1064-8275},
	journal = {SIAM J. Sci. Comput.},
	mrclass = {Preliminary Data},
	mrnumber = {3072765},
	number = {4},
	pages = {A1761--A1791},
	title = {Adaptive inexact {N}ewton methods with a posteriori stopping criteria for nonlinear diffusion {PDE}s},
	url = {https://doi.org/10.1137/120896918},
	volume = {35},
	year = {2013}}

@incollection{Ber_Bof_Prag_Syng_a_post_20,
	author = {Bertrand, Fleurianne and Boffi, Daniele},
	booktitle = {75 years of mathematics of computation},
	doi = {10.1090/conm/754/15152},
	isbn = {978-1-4704-5163-9},
	mrclass = {65N30 (65N15 65N50)},
	mrnumber = {4132116},
	mrreviewer = {Huipo\ Liu},
	pages = {45--67},
	publisher = {Amer. Math. Soc., Providence, RI},
	series = {Contemp. Math.},
	title = {The {P}rager-{S}ynge theorem in reconstruction based a posteriori error estimation},
	url = {https://doi.org/10.1090/conm/754/15152},
	volume = {754},
	year = {2020}}

@article{Dest_Met_expl_err_CFE_99,
	author = {Destuynder, Philippe and M{\'e}tivet, Brigitte},
	coden = {MCMPAF},
	doi = {10.1090/S0025-5718-99-01093-5},
	fjournal = {Mathematics of Computation},
	issn = {0025-5718},
	journal = {Math. Comp.},
	mrclass = {65N30},
	mrnumber = {1648383 (99m:65211)},
	mrreviewer = {Zheng Hui Xie},
	number = {228},
	pages = {1379--1396},
	title = {Explicit error bounds in a conforming finite element method},
	url = {https://doi.org/10.1090/S0025-5718-99-01093-5},
	volume = {68},
	year = {1999}}

@incollection{Boss_hypercirc_a_post_98,
	author = {Bossavit, Alain},
	booktitle = {\'{E}quations aux d\'{e}riv\'{e}es partielles et applications},
	mrclass = {78M25 (65N30 78M10)},
	mrnumber = {1648224},
	pages = {221--237},
	publisher = {Gauthier-Villars, \'{E}d. Sci. M\'{e}d. Elsevier, Paris},
	title = {``{H}ypercercle'' et majorations d'erreur a posteriori par ``corrections locales''},
	year = {1998}}

@article{Cott_Diez_Huer_strict_lin_09,
	author = {Cottereau, R\'{e}gis and D\'{\i}ez, Pedro and Huerta, Antonio},
	doi = {10.1007/s00466-009-0388-1},
	fjournal = {Computational Mechanics},
	issn = {0178-7675},
	journal = {Comput. Mech.},
	mrclass = {74S05 (65N15)},
	mrnumber = {2520189},
	mrreviewer = {Shaochun Chen},
	number = {4},
	pages = {533--547},
	title = {Strict error bounds for linear solid mechanics problems using a subdomain-based flux-free method},
	url = {https://doi.org/10.1007/s00466-009-0388-1},
	volume = {44},
	year = {2009}}

@article{Par_San_Die_flux_a_post_09,
	author = {Par{\'e}s, N. and Santos, H. and D{\'{\i}}ez, P.},
	coden = {IJNMBH},
	doi = {10.1002/nme.2593},
	fjournal = {International Journal for Numerical Methods in Engineering},
	issn = {0029-5981},
	journal = {Internat. J. Numer. Methods Engrg.},
	mrclass = {65N30 (65N15)},
	mrnumber = {2560964 (2010j:65240)},
	mrreviewer = {Christian Wieners},
	number = {10},
	pages = {1203--1244},
	title = {Guaranteed energy error bounds for the {P}oisson equation using a flux-free approach: solving the local problems in subdomains},
	url = {https://doi.org/10.1002/nme.2593},
	volume = {79},
	year = {2009}}

@article{Mor_Noch_Sieb_stars_03,
	author = {Morin, Pedro and Nochetto, Ricardo H. and Siebert, Kunibert G.},
	doi = {10.1090/S0025-5718-02-01463-1},
	fjournal = {Mathematics of Computation},
	issn = {0025-5718},
	journal = {Math. Comp.},
	mrclass = {65N15 (65N50)},
	mrnumber = {1972728},
	mrreviewer = {Piotr P. Matus},
	number = {243},
	pages = {1067--1097},
	title = {Local problems on stars: a posteriori error estimators, convergence, and performance},
	url = {https://doi.org/10.1090/S0025-5718-02-01463-1},
	volume = {72},
	year = {2003}}

@article{Ched_Fuc_Priet_Voh_guar_rob_FE_RD_09,
	author = {Cheddadi, Ibrahim and Fu{\v{c}}{\'{\i}}k, Radek and Prieto, Mariana I. and Vohral{\'{\i}}k, Martin},
	doi = {10.1051/m2an/2009012},
	fjournal = {M2AN. Mathematical Modelling and Numerical Analysis},
	issn = {0764-583X},
	journal = {M2AN Math. Model. Numer. Anal.},
	mrclass = {65N15 (65N08 76R50)},
	mrnumber = {2559737 (2010i:65243)},
	number = {5},
	pages = {867--888},
	title = {Guaranteed and robust a posteriori error estimates for singularly perturbed reaction-diffusion problems},
	url = {https://doi.org/10.1051/m2an/2009012},
	volume = {43},
	year = {2009}}

\end{document}